\theoremstyle{plain} 
\newtheorem{theorem}{Theorem}[section] 
\newtheorem{lemma}[theorem]{Lemma}
\newtheorem{corollary}[theorem]{Corollary}
\newtheorem{proposition}[theorem]{Proposition}
\theoremstyle{definition} 
\newtheorem{definition}[theorem]{Definition}
\newtheorem{example}[theorem]{Example}
\theoremstyle{remark}
\newtheorem{remark}[theorem]{Remark}
\newcommand{\li}{\mathop{\mathrm{li}}\nolimits}
\newcommand{\Spec}{\mathop{\mathrm{Spec}}\nolimits}
\newcommand{\wt}{\mathop{\mathrm{wt}}\nolimits}
\newcommand{\Li}{\mathop{\mathrm{Li}}\nolimits}
\def\padic{{p\mathchar`-\mathrm{adic}}}
\begin{document}
	
	\title{Finite and \'etale polylogarithms}
	\renewcommand{\thefootnote}{\fnsymbol{footnote}}
	\author{Kenji Sakugawa$^{\text{a}}$, Shin-ichiro Seki$^{\text{b},}$\footnote{\textsc{Corresponding author} \\ {\it E-mail addresses}: k-sakugawa@cr.math.sci.osaka-u.ac.jp (S. Sakugawa), shinchan.prime@gmail.com (S. Seki).}}
	\address{{\tiny ${}^{\text{a}}$Department of Mathematics, Graduate School of Science Osaka University Toyonaka, Osaka 560-0043 Japan \\ ${}^{\text{b}}$Department of Mathematics, Graduate School of Science Osaka University Toyonaka, Osaka 560-0043 Japan}}
	
	\address{MSC: 11G55\\ Keywords: finite polylogarithm, \'etale polylogarithm.}
	
	\begin{abstract}
	We show an explicit formula relating \'etale polylogarithms introduced by Wojtkowiak and finite polylogarithms introduced by Elbaz-Vincent and Gangl. This formula is an \'etale analog of Besser's formula relating Coleman's $p$-adic polylogarithms and the finite polylogarithms.
	\end{abstract}
	
	\maketitle
	\section{Introduction}
	The polylogarithm appears in many fields of mathematics in different forms.
	The classical one is defined by the infinite sum
	\begin{equation}
		\label{classical}
		\mathrm{Li}_{m}(z):=\sum_{n=1}^\infty\frac{z^n}{n^m},\quad z\in \mathbb C
	\end{equation}
	for each positive integer $m$. The infinite sum (\ref{classical}) converges absolutely if
	the absolute value of $z$ is less than $1$.
	This holomorphic function has an analytic continuation to $\mathbb C$ as {\em a multi-valued} holomorphic function.
	We can define {\em a single-valued} version $\mathscr L_m(z)$ of $\Li_m(z)$, which is a natural generalization
	of the Bloch--Wigner function (\cite[p.413 (33)]{Z}).
	Before stating our main result ($=$ Theorem \ref{mthm2}), we recall various analogs of $\mathrm{Li}_{m}(z),\mathscr L_m(z)$ and recall relations between them. In this paper, $p$ denotes a prime number.
	\subsection{Coleman's $p$-adic polylogarithms}\label{Coleman}
	Let $\mathbb C_p$ be the $p$-adic completion of an algebraic closure of $\mathbb Q_p$.
	We denote by $|\ |_p$ the valuation
	on $\mathbb C_p$ normalized as $|p|_p=p^{-1}$.
	Then, for $z\in \mathbb C_p$ with $|z|_p<1$, the infinite sum (\ref{classical}) converges in $\mathbb C_p$
	and we denote this infinite sum by $\Li_m^\padic(z)\in \mathbb C_p$.
	In \cite{Col}, Coleman defined an analytic continuation of $\Li_m^\padic$ to $\mathbb C_p\setminus\{1\}$
	as so-called {\em a Coleman function}.
	We can define two Coleman functions $\mathscr L_m^\padic$ and $F_m^\padic$ as $p$-adic analogs of $\mathscr L_m$ by using $\Li_m^\padic$.
	For the definitions of them, see \ \cite[Introduction]{NSW2} and Subsection \ref{Finitepol} below, respectively.
	It is known that $\mathscr L_m^\padic$ and $F_m^\padic$ satisfy functional equations similar to $\mathscr L_m$
	(cf.\ \cite{Wo3}).
	\subsection{Wojtkowiak's $p$-adic \'etale polylogarithms}\label{petale}
	Wojtkowiak introduced a $p$-adic \'etale analog of $\mathscr L_m$
	which is called {\em a $p$-adic \'etale polylogarithm} (cf.\ \cite[Definition 11.0.1]{W1}).
	Let $K$ be a number field, $z\in K$, and $\gamma$ a $\mathbb Q_p$-path in $\mathbb P^1_{\overline K}\setminus\{0,1,\infty\}$
	from the tangential base point $\overrightarrow{01}$ to $z$. Then, the value at $z$ of $p$-adic \'etale polylogarithm $\mathrm{li}_m(z,\gamma)$ attached to $\gamma$
	is defined as a continuous function
	\begin{equation*}
		\mathrm{li}_m(z,\gamma)\colon G_K:=\mathrm{Gal}(\overline K/K)\rightarrow \mathbb Q_p(m),
	\end{equation*}
	where $\overline K$ is an algebraic closure of $K$ and $\mathbb Q_p(m)$ is the $m$-th Tate twist of the trivial Galois representation $\mathbb Q_p$.
	There is also a ``multiple version'' of the $p$-adic \'etale poloylogarithms called {\em $p$-adic \'etale iterated integrals}.
	See \cite{W0} for the precise definition.
	
	Nakamura, Wojtkowiak, and the first author showed an explicit formula relating Coleman's $p$-adic polylogarithms and Wojtkowiak's $p$-adic \'etale polylogarithms
	in \cite{NSW1}, \cite{NSW2}.
	Let $\mathbb{Z}[K\setminus\{0,1\}]$ be the free abelian group generated by the set $K\setminus\{0,1\}$ and take an element $\xi=\sum_{i=1}^n a_i\{z_i\}\in \mathbb{Z}[K\setminus\{0,1\}]$.
	Then, under some ``Bloch condition'' for $m$, the linear sum
	\[
	\sum_{i=1}^n a_i\mathrm{li}_m(z_i,\gamma_i)\colon G_K\rightarrow \mathbb Q_p(m)
	\]
	becomes a $1$-cocycle for some specific $\mathbb{Q}_p$-paths $\gamma_i$ (cf.\ \cite{NSW2}).
	Let $v\colon K\hookrightarrow \mathbb C_p$ be a finite place of $K$ over $p$ and suppose that $m$ is greater than $1$.
	Then the Bloch--Kato logarithm induces a linear map $\log^{\mathrm{BK}}_v \colon H^1(K,\mathbb Q_p(m))\rightarrow {D}_{\mathrm{dR},K_v}(\mathbb Q_p(m))\cong K_v\subset \mathbb C_p$ where $K_v$ is the topological closure of $v(K)$ in $\mathbb C_p$
	and $D_{\mathrm{dR},K_v}$ is a Fontaine functor (cf.\ \cite{Fo}).
	In \cite{NSW2}, they proved the equality called {\em the polylogarithmic Coleman--Ihara formula}:
	\begin{equation}
		-\log_v^{\mathrm{BK}}\left(\sum_{i=1}^n a_i\mathrm{li}_m(z_i,\gamma_i)\right)=\sum_{i=1}^na_i \mathscr L_m^\padic(v(z_i)).
		\label{CIformula}
	\end{equation}
	\subsection{Finite polylogarithms}\label{Finitepol}
	There exists a ``finite analog'' $\text{\rm \pounds}_{p, m}(t)$ of $\mathrm{Li}_m(z)$ introduced by Elbaz-Vincent--Gangl based on Kontsevich's observation (cf.\ \cite{EG}, \cite{Ko}).
	This finite analog is a polynomial defined by truncating the infinite sum
	(\ref{classical}) at degree $p$, namely,
	\begin{equation*}
		\text{\rm \pounds}_{p, m}(t):=\sum_{n=1}^{p-1}\frac{t^n}{n^m} \in \mathbb{Z}_{(p)}[t],
	\end{equation*}
	where $\mathbb{Z}_{(p)}$ is the localization of $\mathbb{Z}$ at a prime ideal $p\mathbb{Z}$.
	Besser established an explicit formula connecting the derivative of $F_m^\padic$ with $\text{\rm \pounds}_{p,m}(t)$
	in \cite{B}. Let $z$ be an element of $W(\overline{\mathbb F_p})$ such that $|z|_p=|1-z|_p$, where $W(\overline{\mathbb F_p})$
	is the ring of Witt vectors of $\overline{\mathbb F_p}$. We define $F_m^\padic$  by 
	\[
	F_m^{\padic}:=\sum_{j=0}^{m-1}a_j\log_p^j(z)\Li^\padic_{m-j}(z)
	\]
	with $a_0=-m$ and
	\[
	a_j = \frac{(-1)^j}{(j-1)!}+\frac{(-1)^{j+1}m}{j!}
	\]
	for $j > 0$ where $\log_p$ is a $p$-adic logarithm. Then, Besser showed the following formula
	\begin{equation}
		\label{Besserformula}
		p^{1-m}DF_m^\padic(z)\equiv \text{\rm \pounds}_{p, m-1}(\mathrm{Fr}_p (z)) \pmod{p}
	\end{equation}
	for $m$ greater than $1$ (\cite[Theorem 1.1]{B}). Here,  $\mathrm{Fr}_p \colon \overline{\mathbb{F}_p} \to \overline{\mathbb{F}_p}$ is the geometric Frobenius automorphism and $D$ is a differential operator defined by $D:=z(1-z)d/dz$.
	
	In \cite{EG}, Elbaz-Vincent--Gangl gave a procedure deriving functional equations of finite polylogarithms from functional equations of $p$-adic polylogarithms. The congruence (\ref{Besserformula}) plays a key role in their paper.
	
	\subsection{Main result}\label{mainresult}
	In this paper, we give an explicit formula of $\li_{p,m}(z,\gamma)$ relating it with a finite polylogarithm.
	When $p$ is greater than $m-1$ and $\gamma$ is an \'etale path, the
	image of $\li_{p,m}(z,\gamma)$ is contained in $\mathbb Z_p(m)$.
	We denote by $\text{\rm \pounds}^{\mathrm{\acute{e}t}}_{p,m} (z,\gamma)$
	the composite of $\li_{p,m}(z,\gamma)$ and the natural projection
	$\mathbb Z_p(m)\twoheadrightarrow \mathbb F_p(m) $.
	For each finite place $v$ of $K$ over $p$, we define a local field $K_{v,z}$ to be $K_v(\zeta_p, z^{1/p})$
	where $\zeta_p$ is a fixed primitive $p$-th root of unity.
	Let $\Lambda_{v,z}:=\log_p(\mathcal O_{K_{z,v}}^\times)/p\log_p(\mathcal O_{K_{z,v}}^\times)$.
	To calculate $\text{\rm \pounds}^{\mathrm{\acute{e}t}}_{p,m} (z,\gamma)$,
	we will introduce a group homomorphism
	\[
	\lambda_{v,z}^{(m)}\colon H^1_f(K_{v,z},\mathbb F_p(1))\otimes_{\mathbb F_p}\mathbb F_p(m-1)\rightarrow \Lambda_{v,z}\otimes_{\mathbb F_p}\mathbb F_p(m-1),
	\]
	where $H^1_f(K_{v,z},\mathbb F_p(1))$ is the finite part of $H^1(K_{v,z},\mathbb F_p(1))$
	(cf.\ Definition \ref{dfn4.8}).
	We show that the restriction of $\text{\rm \pounds}^{\mathrm{\acute{e}t}}_{p,m} (z,\gamma)$ to $G_{K_{v, z}}$ is an element of $H^1_f(K_{v,z},\mathbb F_p(1))\otimes_{\mathbb F_p}\mathbb F_p(m-1)$ (Proposition \ref{propB}). The main result of this paper is as follows:
	\begin{theorem}[{= Theorem \ref{thmC}}]  Let us take the same notation as above. We assume that $m$ is greater than $1$.
		Then, the following congruence holds in $\Lambda_{v,z}\otimes_{\mathbb{F}_p}\mathbb{F}_p(m-1):$
		\begin{equation}
			\label{meq2}\lambda_{v,z}^{(m)}\left( \text{\rm \pounds}^{\mathrm{\acute{e}t}}_{p,m} (z,\gamma)\right)\equiv
			\left[ \frac{(1-\zeta_{ p})^{{p}-m}}{z-1}\text{\rm \pounds}_{p,m}\left(z^{1/p}\right)\right]\otimes\zeta_{ p}^{\otimes{(m-1)}}\ \ \ \Bigl( \mathrm{mod}\ (\zeta_{p}-1)^{{p}-m+1}\Bigr)\nonumber
		\end{equation}
		for a sufficiently large prime number $p$.
		\label{mthm2}
	\end{theorem}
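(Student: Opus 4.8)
The plan is to realize $\lambda_{v,z}^{(m)}$ as the mod-$p$ shadow of the Bloch--Kato logarithm and then to chain the polylogarithmic Coleman--Ihara formula (\ref{CIformula}) with Besser's formula (\ref{Besserformula}), keeping careful track of the power of the uniformizer $1-\zeta_p$ of $K_v(\zeta_p)$ throughout. Since $\zeta_p \in K_{v,z}$, the twist $\mathbb{F}_p(m-1)$ becomes constant over $G_{K_{v,z}}$, so by Proposition \ref{propB} the class $\text{\rm \pounds}^{\mathrm{\acute{e}t}}_{p,m}(z,\gamma)|_{G_{K_{v,z}}}$ lies, after untwisting by $\zeta_p^{\otimes(m-1)}$, in the finite part $H^1_f(K_{v,z},\mathbb{F}_p(1))$ and is therefore represented, via Kummer theory, by a unit class in $\mathcal{O}_{K_{v,z}}^\times/(\mathcal{O}_{K_{v,z}}^\times)^p$. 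The whole theorem then reduces to computing the $p$-adic logarithm of the unit representing this class, modulo $(\zeta_p-1)^{p-m+1}$, and identifying it with the displayed finite polylogarithm.

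First I would pin down the unit class. Unwinding Wojtkowiak's construction over $K_{v,z}$ --- where both $\zeta_p$ and $z^{1/p}$ are available --- I expect the restricted cocycle to be the Kummer cocycle of an explicit unit $u \in \mathcal{O}_{K_{v,z}}^\times$ built from $1-z$, $z^{1/p}$ and $\zeta_p$; concretely the iterated-integral description of $\li_{p,m}$ should collapse mod $p$ to such a single Kummer class once a $p$-th root of $z$ is adjoined. This step is to be read off from the same cocycle analysis that underlies Proposition \ref{propB} and Definition \ref{dfn4.8}.

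Next I would evaluate $\lambda_{v,z}^{(m)}$, which by Definition \ref{dfn4.8} is induced by $\log_p$ on local units, so that $\lambda_{v,z}^{(m)}(\text{\rm \pounds}^{\mathrm{\acute{e}t}}_{p,m}(z,\gamma)) = [\log_p(u)]\otimes\zeta_p^{\otimes(m-1)}$. To identify $\log_p(u)$ I would compare with the $p$-adic side: the Coleman--Ihara formula (\ref{CIformula}) expresses the Bloch--Kato logarithm of $\li_{p,m}$ in terms of Coleman's $\mathscr{L}_m^\padic$, and after passing from $\mathscr{L}_m^\padic$ to $F_m^\padic$ (they differ only by explicit $\log_p^j \cdot \Li^\padic_{m-j}$ terms, as recorded by the coefficients $a_j$ in the definition of $F_m^\padic$) Besser's formula (\ref{Besserformula}) converts the relevant derivative of $F_m^\padic$ into a finite polylogarithm evaluated at $\mathrm{Fr}_p(z) = z^{1/p}$. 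Because the Kummer extension $K_v(z^{1/p})$ is exactly where this geometric Frobenius becomes the literal $p$-th root, the finite polylogarithm appearing is evaluated at $z^{1/p}$, and the prefactor $1/(z-1)$ records the passage between weights $m-1$ and $m$ effected by the operator $D = z(1-z)\,d/dz$.

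Finally I would carry out the $(1-\zeta_p)$-adic bookkeeping. The prime $p$ satisfies $p = (\text{unit})\cdot(1-\zeta_p)^{p-1}$ in $\mathbb{Z}[\zeta_p]$, and the normalizing factor $p^{1-m}$ in Besser's formula together with the ramification of $K_{v,z}/K_v$ should produce the prefactor $(1-\zeta_p)^{p-m}$ and pin down the precision $(\zeta_p-1)^{p-m+1}$; the hypothesis that $p$ is large relative to $m$ guarantees $p-m+1 > 0$ and that all intervening denominators $n^m$ (for $1\le n\le p-1$) and the coefficients $a_j$ are $p$-adic units. The \emph{main obstacle} is the first step: extracting from Wojtkowiak's fundamental-group definition an explicit unit $u$ whose $\log_p$ matches the finite polylogarithm, and reconciling the integral Coleman--Ihara statement (valued in $K_v \subset \mathbb{C}_p$) with the residue-level map $\lambda_{v,z}^{(m)}$ without losing control of the $(1-\zeta_p)$-valuation. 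Once the unit is identified and the two $p$-adic formulas are aligned mod $p$, the congruence (\ref{meq2}) follows by reduction.
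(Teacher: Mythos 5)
Your overall frame---restrict to $G_{K_{v,z}}$, realize the class as the Kummer class of an explicit unit, and then compute the $p$-adic logarithm of that unit modulo $(\zeta_p-1)^{p-m+1}$---matches the paper's strategy. What you flag as the ``main obstacle,'' namely producing the unit, is in the paper simply quoted from Nakamura--Wojtkowiak (Proposition \ref{propNW}): the restriction equals $\frac{1}{(m-1)!}\kappa_{p,K_{v,z}}(w_{p,m}(z,\gamma))\otimes\zeta_p^{\otimes(m-1)}$ with $w_{p,m}(z,\gamma)=\prod_{i=0}^{p-1}(1-z^{1/p}\zeta_p^i)^{i^{m-1}}$. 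So that step is available, but you do not actually supply it, and nothing in your write-up would let you recover this specific product.

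The genuine gap is in your second step. The paper does \emph{not} derive the congruence by chaining the Coleman--Ihara formula (\ref{CIformula}) with Besser's formula (\ref{Besserformula}); it computes $\log_p(w_{p,m}(z,\gamma))$ directly (Proposition \ref{propD}) via elementary cyclotomic congruences for $\sum_{i}i^n\zeta_p^{ij}$ (Lemma \ref{lem1.5}), Dilcher's identity, and a functional equation of finite \emph{star-multiple} polylogarithms (the congruence (\ref{new fn eq}) applied to $\text{\rm \pounds}^{\star}_{p,\{1\}^m}$), which is the technical heart of the proof and is entirely absent from your plan. Your proposed chain does not close for several concrete reasons. First, (\ref{CIformula}) applies to Bloch-condition linear combinations that are cocycles over the base field and to the Bloch--Kato logarithm on $H^1(K,\mathbb Q_p(m))$, whereas $\lambda_{v,z}^{(m)}$ is the Kummer-theoretic $\log_p$ applied after restriction to the ramified extension $K_{v,z}$ and reduction mod $p$; no compatibility between these two maps at the precision $(\zeta_p-1)^{p-m+1}$ is stated or obvious, and establishing one would be a theorem of comparable difficulty to the one being proved. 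Second, Besser's formula outputs $\text{\rm \pounds}_{p,m-1}$ (weight $m-1$) applied to $DF_m^{\padic}$, while the target involves $\text{\rm \pounds}_{p,m}$ and the factor $1/(z-1)$; ``integrating'' $D=z(1-z)d/dz$ mod $p$ introduces constants and precision loss you do not control. Third, your uniformizer bookkeeping is off: $p^{1-m}$ corresponds to $(1-\zeta_p)^{(1-m)(p-1)}$, not to $(1-\zeta_p)^{p-m}$; in the paper the exponent $p-m$ arises from the single relation $p/(\zeta_p-1)^{m-1}\equiv-(\zeta_p-1)^{p-m}$ (Lemma \ref{lem1.7}). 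As written, the proposal would not produce the stated congruence.
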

	The formula in Theorem \ref{mthm2} can
	be regarded as {\em an \'etale analog} of Besser's formula (\ref{Besserformula}).
	Furthermore, it also can be regarded as {\em a finite analog} of the polylogarithmic Coleman--Ihara formula
	(\ref{CIformula}).
	The proof of Theorem \ref{mthm2} is based on an explicit formula of finite \'etale polylogarithms proved by Nakamura--Wojtkowiak (Proposition \ref{propNW}) and a functional equation of finite star-multiple polylogarithms (see \ref{sec:app} for the definition). Though finite {\em star-multiple} polylogarithms do not appear in the statement of Theorem \ref{mthm2}, they unexpectedly appear in the proof (see the proof of Proposition \ref{propD}). 
	
	In Section \ref{sec:reformulation}, we reformulate our main result with an ad\'ele-like language according to \cite{KZ}, \cite{SS}.
	\subsection{Notation}\label{notation}
	Let $K$ be a field of characteristic $0$. We denote by $\mu (K)$ the set of roots of unity in $K$. We fix an algebraic closure $\overline{K}$ of $K$ and the symbol $G_K$ denotes the absolute Galois group $\mathrm{Gal}(\overline K/K)$ of $K$.
	Let $L$ be a local field or an algebraic extension of $\mathbb Q$. Then, we denote by $\mathcal O_L$
	the ring of integers of $L$.
	For each locally noetherian affine scheme $\mathrm{Spec}(R)$ and for each topological abelian group $A$ equipped with a continuous action of the \'etale fundamental group $\pi:=\pi_1^{\text{\'et}}(\mathrm{Spec}(R))$ of $\mathrm{Spec}(R)$, we denote by $H^i_{\mathrm{cont}}(R,A)$ the $i$-th continuous group cohomology
	$H^i(\pi,A)$.
	Note that $H^i_{\mathrm{cont}}(R,A)$ coincides with the $i$-th continuous \'etale
	cohomology in the sense of Jannsen (cf.\ \cite{Jann}) if $A$ is a profinite group and $R$ is a product of fields.
	If $A$ is a finite group, then we omit the notation ``cont'' from our notation
	because $H^i_{\mathrm{cont}}(R,A)$ coincides with the usual \'etale cohomology
	group of the \'etale sheaf attached to $A$ on $\Spec(R)_{\text{\'et}}$.
	We denote by
	\[
	\kappa_{n, K} \colon K^\times/(K^\times)^{n}\xrightarrow{\sim} H^1(K,{{\mathbb Z}}/{n{\mathbb Z}}(1))
	\]
	the Kummer map induced by the Kummer sequence
	\[
	1\rightarrow {{\mathbb Z}}/{n{\mathbb Z}}(1)\rightarrow \overline K^\times\xrightarrow{n} \overline K^\times \rightarrow 1
	\]
	where ${{\mathbb Z}}/{n{\mathbb Z}}(1)$ is the group of $n$-th roots of unity in $\overline K$.
	\section{Review of \'etale polylogarithms}
	\label{sec:Finite etale plylogarithms and finite polylogarithms}
	In this section, we review \'etale polylogarithms introduced by Wojtkowiak.
	Let $p$ be a prime number. Suppose that $K$ and $\overline{K}$ are subfields of $\mathbb C$.
	Let $X$ be ${\mathbb P}^1_K\setminus\{0,1,\infty\}$ and $\pi_1^{p}(X)$ the maximal pro-$p$ quotient of the \'etale fundamental
	group $\pi_1^{\mathrm{\acute{e}t}}(X\otimes_K \overline K,\overrightarrow{01})$.
	Let us take a standard set of generators $\{x,y\}$ of $\pi_1^{\mathrm{\acute{e}t}}(X\otimes_K \overline K,\overrightarrow{01})$
	as in \cite[Section 2.3]{Ih90}.
	Then, $x$
	determines a coherent system of roots of unity $(\zeta_n)_{n>0},\zeta_n\in {\mathbb C}$.
	We regard this coherent system as a basis of $\widehat {\mathbb Z}(1):=\varprojlim_n \mathbb{Z}/n\mathbb{Z}(1)$.
	We define a multiplicative embedding
	\[
	\iota \colon \pi_1^{p}(X)\hookrightarrow \mathbb Q_p\langle \! \langle A,B \rangle \! \rangle
	\]
	of $\pi_1^{p}(X)$ to the non-commutative formal power series over $\mathbb Q_p$ in variables $A,B$
	by
	\[
	\iota(x):=\exp(A),\quad \iota(y):=\exp(B).
	\]
	For $z\in K$ and for an \'etale path $\gamma$ in $X\otimes_K \overline K$ from $\overrightarrow{01}$ to $z$, we consider the continuous $1$-cocycle
	\[
	\mathfrak f_\gamma \colon G_K\rightarrow \pi_1^{p}(X);\quad \sigma\mapsto \gamma^{-1}\circ {^\sigma \gamma}
	\]
	(cf.\ \cite[(2.3.1)]{Ih90}).
	Note that the constant term of $\iota(\mathfrak f_\gamma(\sigma))$
	is equal to $1$ for any $\sigma \in G_K$.
	Hence, the infinite sum
	\[
	\log(\iota(\mathfrak f_\gamma(\sigma))):=\sum_{n=1}^\infty \frac{(-1)^{n-1}(\iota(\mathfrak f_\gamma(\sigma))-1)^n}{n}
	\]
	converges in $\mathbb Q_p\langle \! \langle A,B \rangle \! \rangle$.
	We define a special element $e_m$ for each positive integer $m$
	by
	\[
	e_1:=B,\quad e_m:=[A,e_{m-1}]\in \mathbb Q_p\langle\!\langle A,B\rangle\!\rangle.
	\]
	Here, for each $\xi, \eta \in \mathbb{Q}_p\langle \! \langle A, B \rangle \! \rangle$, the bracket product $[\xi, \eta]$ is defined to be $\xi \eta - \eta \xi$.
	\begin{definition}[\cite{W0}]
		Let $m$ be a positive integer, $z$ an element of $K\setminus\{0,1\}$,
		and $\gamma$ an \'etale path from $\overrightarrow{01}$ to $z$.
		Then, the value at $z$ of {\em the $p$-adic \'etale polylogarithm}
		\[
		\mathrm{li}_{p,m}^{\mathrm{\acute{e}t}}(z,\gamma)\colon G_K\rightarrow \mathbb Q_{p}(m)
		\]
		attached to $\gamma$ is defined by
		\[
		\mathrm{li}_{p, m}^{\mathrm{\acute{e}t}}(z,\gamma)(\sigma):=\left(\text{the coefficient of }\log\left(\iota(\mathfrak f_\gamma(\sigma))\right)\text{ at }e_m\right)\otimes(\zeta_{p^n})_{n>0}^{\otimes m}.
		\]
		\label{dfn4.1}
	\end{definition}
	In general, these continuous functions are not $1$-cocycles.
	The following lemma is easily checked by construction:
	\begin{lemma}The image of $\mathrm{li}_{p,m}^{\mathrm{\acute{e}t}}(z,\gamma)$
			is contained in ${\mathbb Z_p}[\frac{1}{m!}](m)\subset \mathbb Q_{p}(m)$.
		\label{lem4.2}
	\end{lemma}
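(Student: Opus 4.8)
The plan is to separate the definition into its two operations---forming $\log\iota(\mathfrak f_\gamma(\sigma))$ and reading off its coefficient at $e_m$---and to reduce the integrality statement to a single monomial coefficient that can be estimated through an integral Magnus expansion.

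First I would replace ``coefficient at $e_m$'' by the coefficient of the word $A^{m-1}B$. Since $\iota(x)=\exp(A)$ and $\iota(y)=\exp(B)$ are group-like for the coproduct for which $A,B$ are primitive, the series $\iota(\mathfrak f_\gamma(\sigma))$ is group-like, so $L:=\log\bigl(\iota(\mathfrak f_\gamma(\sigma))\bigr)$ is a Lie series in $\mathbb Q_{p}\langle \! \langle A,B \rangle \! \rangle$. In the free Lie algebra the part of bidegree $(m-1,1)$ (degree $m-1$ in $A$ and degree $1$ in $B$) is one-dimensional and spanned by $e_m=\mathrm{ad}(A)^{m-1}(B)$; hence this part of $L$ equals $c_m\,e_m$, where $c_m$ is the coefficient at $e_m$. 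As $A^{m-1}B$ occurs in $e_m$ with coefficient $1$, comparing the coefficient of this word shows that $c_m$ equals the coefficient of $A^{m-1}B$ in $L$. It therefore suffices to bound the denominator of that word coefficient.

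Next I would pass to an integral model. Let $\mu\colon\pi_1^{p}(X)\hookrightarrow \mathbb Z_p\langle \! \langle U,V \rangle \! \rangle$ be the classical Magnus embedding $x\mapsto 1+U$, $y\mapsto 1+V$; because $\mathfrak f_\gamma(\sigma)$ lies in the pro-$p$ group $\pi_1^{p}(X)$, the series $\mu(\mathfrak f_\gamma(\sigma))$ has coefficients in $\mathbb Z_p$. The embedding $\iota$ factors as $\iota=\phi\circ\mu$, where $\phi$ is the continuous augmented algebra homomorphism with $\phi(U)=e^{A}-1$ and $\phi(V)=e^{B}-1$; since $\phi$ preserves the augmentation ideal it commutes with $\log$, so $L=\phi\bigl(\log\mu(\mathfrak f_\gamma(\sigma))\bigr)$. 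Writing $\log\mu(\mathfrak f_\gamma(\sigma))=\sum_W\kappa_W\,W$, only the words $W=U^{r}V$ contribute to the monomial $A^{m-1}B$: any $U$ to the right of the single $V$ produces a factor $e^{A}-1$ of zero constant term, which cannot yield a monomial in which all $A$'s precede $B$. Using $(e^{A}-1)^{r}=r!\sum_{d}S(d,r)A^{d}/d!$ with $S(d,r)$ the Stirling numbers of the second kind, I obtain
\[
c_m=\frac{1}{(m-1)!}\sum_{r=0}^{m-1} r!\,S(m-1,r)\,\kappa_{U^{r}V}.
\]

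The main obstacle is the denominator count in this formula. The Stirling numbers and the Magnus coefficients of $\mu(\mathfrak f_\gamma(\sigma))$ are $p$-integral, and the only denominators in $\kappa_{U^{r}V}$ are the factors $1/n$ with $n\le r+1\le m$ coming from the logarithmic series. A crude bound overshoots $1/m!$ by a factor of $p$ as soon as $p\le m$, so some genuine cancellation is needed; indeed for an \emph{arbitrary} element of $\pi_1^{p}(X)$ the coefficient at $e_m$ need not lie in $\tfrac1{m!}\mathbb Z_p$, since for the element $xy$ the coefficient of $e_m$ in $\log(e^{A}e^{B})$ is the Bernoulli quantity $B_{m-1}/(m-1)!$. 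The clean way to finish is to show that the coefficients $\kappa_{U^{r}V}$ of the \emph{geometric} cocycle are themselves $p$-integral, after which the displayed formula gives $c_m\in\frac1{(m-1)!}\mathbb Z_p\subseteq\frac1{m!}\mathbb Z_p$ with room to spare. Establishing this integrality is exactly where the construction of $\mathfrak f_\gamma(\sigma)=\gamma^{-1}\circ{}^{\sigma}\gamma$ enters: the pure-$A$ part of $L$ is forced, by group-likeness together with the retraction $y\mapsto 1$, to be a single linear term, and the $\kappa_{U^{r}V}$ are the Kummer/Soul\'e-type classes attached to $\gamma$, which carry a $\mathbb Z_p$-structure. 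Granting this, tensoring with $(\zeta_{p^{n}})_{n>0}^{\otimes m}$ yields the asserted containment of the image of $\mathrm{li}_{p,m}^{\mathrm{\acute{e}t}}(z,\gamma)$ in $\mathbb Z_p[\frac1{m!}](m)$.
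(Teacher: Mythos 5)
Your reduction to the coefficient of the word $A^{m-1}B$, the factorization $\iota=\phi\circ\mu$ through the integral Magnus embedding $x\mapsto 1+U$, $y\mapsto 1+V$, and the resulting formula $c_m=\frac{1}{(m-1)!}\sum_{r}r!\,S(m-1,r)\,\kappa_{U^rV}$ are all correct. The problem is the final paragraph. The step you defer --- the $p$-integrality of the coefficients $\kappa_{U^rV}$ of $\log\mu(\mathfrak f_\gamma(\sigma))$ for \emph{every} prime $p$ --- is asserted, not proved: saying that the $\kappa_{U^rV}$ are ``Kummer/Soul\'e-type classes which carry a $\mathbb Z_p$-structure'' is not an argument, and your own observation that $\log(e^Ae^B)$ has coefficient $B_{m-1}/(m-1)!$ at $e_m$ shows that such integrality is not a formal consequence of group-likeness, so it would have to be extracted from genuinely arithmetic input that you do not supply. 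As written, the proof is incomplete at exactly the point you yourself flag as the main obstacle.

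That obstacle, however, is an artifact of misreading the target. The symbol $\mathbb Z_p[\frac{1}{m!}]$ denotes the subring of $\mathbb Q_p$ generated by $\mathbb Z_p$ and $1/m!$, which equals $\mathbb Z_p$ when $p>m$ and equals all of $\mathbb Q_p$ when $p\le m$; it is not the fractional ideal $\frac{1}{m!}\mathbb Z_p$, which is what you are implicitly proving containment in when you worry that the crude bound ``overshoots $1/m!$ by a factor of $p$ as soon as $p\le m$.'' For $p\le m$ the lemma is vacuous, and for $p>m$ your own crude bound already gives $\kappa_{U^rV}\in\frac{1}{\mathrm{lcm}(1,\dots,r+1)}\mathbb Z_p=\mathbb Z_p$ (since $r+1\le m<p$), hence $c_m\in\frac{1}{(m-1)!}\mathbb Z_p=\mathbb Z_p$. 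So the entire last paragraph, including the unproven assertion, can simply be deleted. In fact the Magnus detour is also dispensable: since $A^{m-1}B$ has degree $m$, only the degree-$\le m$ truncations of $\exp$ and $\log$ intervene, and their coefficients involve only the denominators $1!,\dots,m!$ and $1,\dots,m$, all invertible in $\mathbb Z_p[\frac1{m!}]$, which is a closed subring of $\mathbb Q_p$ and therefore also absorbs the limits needed for elements such as $x^a$ with $a\in\mathbb Z_p$. This one-line check is presumably what the authors mean by ``easily checked by construction''; the paper itself supplies no proof.
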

	\begin{remark}$ $
		\begin{itemize}
			\item[(1)]For each positive integer $m$, the continuous map $\mathrm{li}_{p,m}(z,\gamma)$
			coincides with Wojtkowiak's $p$-adic polylogarithm $l_m(z)_\gamma$ {\em multiplied by $(-1)^{m-1}$} (cf.\ \cite[Definition 11.0.1]{W1}).
			\item[(2)]Let $\pi_1^{\mathrm{nil}}(X)$ be the pro-nilpotent completion
			of the \'etale fundamental group of $X\otimes_K\overline K$. Then, by replacing $\pi_1^p(X)$ with $\pi_1^{\mathrm{nil}}(X)$, we can define an ad\'elic version \[
			\mathrm{li}_{\mathbb A_{\mathbb Q,f},m}^{\mathrm{\acute{e}t}}(z,\gamma)\colon G_K\rightarrow \mathbb A_{\mathbb Q,f}(m)
			\] of $\mathrm{li}_{p,m}^{\mathrm{\acute{e}t}}(z,\gamma)$.
			Here, $\mathbb A_{\mathbb Q,f}(m)$ is the $m$-th Tate twist of the ring of finite ad\'eles
			$\mathbb A_{\mathbb Q,f}$ and we call this continuous function the value at $z$ of {\it the \'etale polylogarithm} attached to $\gamma$.
		\end{itemize}
		\label{normalization}
	\end{remark}
	\begin{definition}
		For each positive integer $m$ less than $p$, we define the continuous map
		\[
		\text{\rm \pounds}_{p,m}^{\mathrm{\acute{e}t}}(z,\gamma) \colon G_K\rightarrow \mathbb F_p(m)
		\]
		to be the composite of $\mathrm{li}_{p, m}^{\mathrm{\acute{e}t}}(z,\gamma)$
		and the natural projection $\mathbb Z_p(m)\twoheadrightarrow \mathbb F_p(m)$.
		We call $\text{\rm \pounds}_{p,m}^{\mathrm{\acute{e}t}}(z,\gamma)$
		the value at $z$ of {\em the mod $p$ \'etale polylogarithm} attached to $\gamma$.
		\label{dfn4.2}
	\end{definition}
	For later use, we define a homomorphism $\lambda_{\mathbb F_p(m-1),K}$. 
	\begin{definition}Let $K$ be a finite extension of $\mathbb Q_p$.
		\begin{itemize}
			\item[(1)]Let $\log_p\colon \mathcal O_K^\times\to K$ be a $p$-adic logarithm.
			Then, we define $\Lambda_K$ to be $\log_p(\mathcal O_K^\times)/p\log_p(\mathcal O_K^\times)$.
			By definition, $\log_p$ induces a natural homomorphism \[
			\log_{\mathbb F_p(m-1),K}\colon \mathcal O_K^\times/(\mathcal O_K^\times)^p\otimes_{\mathbb F_p}\mathbb F_p(m-1)\to \Lambda_K(m-1):=\Lambda_K\otimes_{\mathbb F_p}\mathbb F_p(m-1).
			\]
			\item[(2)] We define a group homomorphism
			\begin{eqnarray*}
				\lambda_{\mathbb F_p(m-1),K}\colon H^1_f(K,\mathbb F_p(1))\otimes_{\mathbb F_p}\mathbb F_p(m-1)\rightarrow \Lambda_{K}(m-1)
			\end{eqnarray*}
			by composing $\kappa^{-1}_{p, K}$ and $\log_{\mathbb F_p(m-1),K}$. Here, $H^1_f(K,\mathbb F_p(1))$
				is the subgroup of $H^1(K,\mathbb F_p(1))$ defined to be the image of $\mathcal O_K^\times/(\mathcal O_K^\times)^p$ under the Kummer map.
		\end{itemize}
		\label{dfn4.8}
	\end{definition}
	
	\section{Main result}
	\label{subsec:Explicit formula of finite etale polylogatihms}
	In this and the next section, we fix a number field $K$ contained in $\mathbb C$
	and fix $z\in K\setminus\{0, 1\}$.
	For a finite place $v$ of $K$ dividing $p$, we put
	\[
	K_{v,z}:=K_v( \mu_p,z^{1/p} ),
	\]
	where $K_v$ is the $v$-adic completion of $K$ and $\mu_p$ is the set of all $p$-th roots of unity in $\overline{K}$. 
	\begin{proposition}
		[{\cite[Section 3, Corollary]{NW}}]
		Let $m$ be a positive integer less than $p-1$. Let $z$ be an element of $K\setminus\{0,1\}$
		and $\gamma$ an \'etale path in ${\mathbb P}^1_{\overline K}\setminus\{0,1,\infty\}$ from $\overrightarrow{01}$ to $z$. Let $z^{1/p}$ be the $p$-th root of $z$ determined by $\gamma$ and put
		\[
		w_{p,m}(z,\gamma):=\prod_{i=0}^{p-1}\left(1-z^{1/p}\zeta_p^i\right)^{i^{m-1}},
		\]
		where we understand  $0^0=1$ when $m=1$ and $i=0$. Then, the equality
		\[
		\left.\text{\rm \pounds}^{\mathrm{\acute{e}t}}_{p,m}(z,\gamma)\right|_{G_{K_{v, z}}}=\frac{1}{(m-1)!}\kappa_{p,K_{v, z}}(w_{p,m}(z,\gamma))\otimes \zeta_p^{\otimes(m-1)}
		\]
		holds in $H^1(K_{v, z},\mathbb F_p(m))=H^1(K_{v, z},\mathbb F_p(1))\otimes_{\mathbb F_p}\mathbb F_p(m-1)$.
		\label{propNW}
	\end{proposition}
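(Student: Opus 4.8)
The plan is to compute the coefficient of $\log(\iota(\mathfrak f_\gamma(\sigma)))$ at $e_m=\mathrm{ad}(A)^{m-1}(B)$ directly for $\sigma\in G_{K_{v,z}}$ and to match it, branch by branch over the fiber of the $p$-power map, with the asserted Kummer class. First I would explain why the restriction is a genuine cohomology class at all. As noted after Definition \ref{dfn4.1}, $\mathrm{li}^{\mathrm{\acute{e}t}}_{p,m}(z,\gamma)$ is not a $1$-cocycle in general: its failure to be additive is governed by products of strictly lower-weight components of $\mathfrak f_\gamma$, which carry the Kummer characters of $z$ and of $\mu_p$. Over $K_{v,z}=K_v(\mu_p,z^{1/p})$ the Galois action on $\mu_p$ is trivial and $z^{1/p}$ is fixed, so these weight-one data are constant; hence, after reduction modulo $p$, all the defect terms vanish and $\text{\rm \pounds}^{\mathrm{\acute{e}t}}_{p,m}(z,\gamma)|_{G_{K_{v,z}}}$ becomes an honest homomorphism $G_{K_{v,z}}\to\mathbb F_p(m)$. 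Since $\mu_p\subset K_{v,z}$ makes $\mathbb F_p(m-1)$ a trivial module, this gives the identification $H^1(K_{v,z},\mathbb F_p(m))=H^1(K_{v,z},\mathbb F_p(1))\otimes_{\mathbb F_p}\mathbb F_p(m-1)$ used in the statement.

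Second, and this is the computational heart, I would perform a Kummer descent along $[p]\colon t\mapsto t^p$, whose fiber over $z$ is the rational set $\{z^{1/p}\zeta_p^i\}_{0\le i\le p-1}$. The weight-one datum carried by the letter $B$ (the loop around $1$) is the Kummer character of $1-t$ at $z$, and over the fiber this factors as $1-z=\prod_{i=0}^{p-1}(1-z^{1/p}\zeta_p^i)$, so each branch contributes $\kappa_{p,K_{v,z}}(1-z^{1/p}\zeta_p^i)$. The $m-1$ letters $A$ (the loop around $0$) pull back to $m-1$ copies of $\frac{dt}{t}=p\,\frac{ds}{s}$ on the cover, whose $(m-1)$-fold iteration produces the divided power $\frac{1}{(m-1)!}(\log s)^{m-1}$ evaluated at $s=z^{1/p}\zeta_p^i$; in the guiding de Rham picture $\log s=\tfrac1p\log z+i\log\zeta_p$, and projecting onto the $\mathbb F_p(m-1)$-component picked out by $\zeta_p^{\otimes(m-1)}$ retains only the $(\log\zeta_p)^{m-1}$-part, leaving the scalar $i^{m-1}/(m-1)!$ on the $i$-th branch. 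This is the Galois-theoretic form of the twisted distribution relation for polylogarithms, with $\log$ replaced throughout by Kummer cochains.

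Assembling the branches and using multiplicativity of the Kummer map then gives
\[
\frac{1}{(m-1)!}\sum_{i=0}^{p-1} i^{m-1}\,\kappa_{p,K_{v,z}}\!\left(1-z^{1/p}\zeta_p^i\right)=\frac{1}{(m-1)!}\,\kappa_{p,K_{v,z}}\!\left(\prod_{i=0}^{p-1}\left(1-z^{1/p}\zeta_p^i\right)^{i^{m-1}}\right)=\frac{1}{(m-1)!}\,\kappa_{p,K_{v,z}}\bigl(w_{p,m}(z,\gamma)\bigr),
\]
and tensoring with $\zeta_p^{\otimes(m-1)}$ yields the claimed equality. The normalizing denominator $1/(m-1)!$ is the same one already built into $e_m$ and visible in Lemma \ref{lem4.2}, and it is invertible modulo $p$ precisely because $m<p-1$.

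The hard part will be the middle step: making the divided-power/iterated-integral heuristic rigorous in the étale setting. Concretely, one must replace the de Rham computation by Wojtkowiak's explicit description of the Galois action on $\pi_1^p(X)$ and of the coefficients of $\log(\iota(\mathfrak f_\gamma))$, track how $\mathrm{ad}(A)^{m-1}(B)$ transforms under pullback along $[p]$ together with the $\mu_p$-deck action on each sheet, and justify term by term that the mod-$p$ reduction restricted to $G_{K_{v,z}}$ collapses to the stated weighted product. The constraint $m<p-1$ is exactly what prevents the exponents $i^{m-1}$ and the factorial from degenerating modulo $p$, so the bookkeeping survives the reduction intact.
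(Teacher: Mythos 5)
This proposition is not proved in the paper at all: it is imported verbatim from Nakamura--Wojtkowiak \cite[Section 3, Corollary]{NW}, so there is no in-paper argument to compare against. Judged on its own terms, your proposal is an outline that stops exactly where the content of the statement begins. Your first paragraph (why the restriction to $G_{K_{v,z}}$ is a homomorphism and why $H^1(K_{v,z},\mathbb F_p(m))$ untwists) is essentially sound: the BCH defect terms in the coefficient of $e_m=\mathrm{ad}(A)^{m-1}(B)$ all involve the coefficient of $A$, i.e.\ the mod-$p$ Kummer character of $z$, which dies on $G_{K_{v,z}}$ since $z^{1/p}\in K_{v,z}$, and the cyclotomic twist is trivial mod $p$ since $\mu_p\subset K_{v,z}$. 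But your second paragraph is the theorem, and you explicitly concede that the ``divided-power/iterated-integral heuristic'' is not made rigorous. The de Rham picture ($\log s=\tfrac1p\log z+i\log\zeta_p$, keep the $(\log\zeta_p)^{m-1}$ part) does not transport to the \'etale side by analogy; what is actually needed is the explicit group-theoretic fact that, modulo terms of $B$-degree $\ge 2$ and modulo $p$, the cocycle $\mathfrak f_\gamma(\sigma)$ restricted to $G_{K_{v,z}}$ is a product $\prod_{i=0}^{p-1}\bigl(x^iyx^{-i}\bigr)^{\epsilon_i(\sigma)}$ whose exponents $\epsilon_i(\sigma)$ are identified with the Kummer characters $\kappa_{p,K_{v,z}}(1-z^{1/p}\zeta_p^i)(\sigma)$ via the geometry of the $p$-power covering of $\mathbb P^1\setminus\{0,1,\infty\}$. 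Once that identification is in hand, the coefficient of $e_m$ in $\log(x^iyx^{-i})=\exp(\mathrm{ad}(iA))(B)=\sum_k\frac{i^k}{k!}\mathrm{ad}(A)^k(B)$ is $i^{m-1}/(m-1)!$, which is the rigorous form of your branch-weight claim. Supplying the identification of the $\epsilon_i$ is precisely the corollary in \cite{NW}; without it your argument is a restatement of the formula, not a derivation of it.

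Two smaller inaccuracies: the weight-one datum carried by $B$ is the Kummer character of $1-z$, which does \emph{not} vanish or become ``constant'' on $G_{K_{v,z}}$ (only the $A$-coefficient does; fortunately that is all you need), and $(m-1)!$ is invertible mod $p$ whenever $m\le p$, so the hypothesis $m<p-1$ is not ``precisely'' for that invertibility --- it is needed so that the lower-order corrections in the Nakamura--Wojtkowiak expansion genuinely vanish modulo $p$.
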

	\begin{proposition}
		Let us take the same notation as in Proposition $\ref{propNW}$
		and let $v$ be a finite place of $K$ dividing $p$.
		Then, the restriction of the mod $p$ \'etale polylogarithm $\text{\rm \pounds}^{\mathrm{\acute{e}t}}_{p,m} (z,\gamma)$ to the absolute Galois group $G_{K_{v,z}}$ is a continuous group homomorphism.
		Furthermore, it is contained in $H^1_f(K_{v,z},\mathbb F_p(1))\otimes_{\mathbb F_p}\mathbb F_p(m-1)$ if $p$ does not divide $1-z$.
		\label{propB}
	\end{proposition}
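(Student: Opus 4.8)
The plan is to read off both assertions directly from the explicit description of the restriction furnished by Proposition \ref{propNW}, which identifies $\text{\rm \pounds}^{\mathrm{\acute{e}t}}_{p,m}(z,\gamma)|_{G_{K_{v,z}}}$ with the scalar multiple $\frac{1}{(m-1)!}\kappa_{p,K_{v,z}}(w_{p,m}(z,\gamma))\otimes\zeta_p^{\otimes(m-1)}$ of a Kummer class. For the homomorphism claim I would first observe that $K_{v,z}=K_v(\mu_p,z^{1/p})$ contains $\mu_p$, so the mod $p$ cyclotomic character is trivial on $G_{K_{v,z}}$ and the Galois module $\mathbb{F}_p(m)$ is the trivial $G_{K_{v,z}}$-module. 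Hence $H^1(K_{v,z},\mathbb{F}_p(m))=\mathrm{Hom}_{\mathrm{cont}}(G_{K_{v,z}},\mathbb{F}_p(m))$, since for a trivial coefficient module every $1$-cocycle is a homomorphism and every $1$-coboundary vanishes. By Proposition \ref{propNW} the restriction lies in this $H^1$, and is therefore a continuous group homomorphism.

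For the second assertion, by the definition of $H^1_f(K_{v,z},\mathbb{F}_p(1))$ as the image of $\mathcal{O}_{K_{v,z}}^\times/(\mathcal{O}_{K_{v,z}}^\times)^p$ under $\kappa_{p,K_{v,z}}$, it suffices to prove that $w_{p,m}(z,\gamma)=\prod_{i=0}^{p-1}(1-z^{1/p}\zeta_p^i)^{i^{m-1}}$ is a $v$-adic unit. Let $\mathfrak{m}$ be the maximal ideal of $\mathcal{O}_{K_{v,z}}$ and let a bar denote reduction to the residue field $k$ of characteristic $p$. The hypothesis $p\nmid(1-z)$, i.e.\ $v(1-z)=0$, forces $v(z)\geq 0$ (otherwise $v(1-z)=v(z)<0$), so $z^{1/p}\in\mathcal{O}_{K_{v,z}}$, and it also forces $\bar{z}\neq 1$. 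Since $\zeta_p\equiv 1\pmod{\mathfrak{m}}$, every factor satisfies $1-z^{1/p}\zeta_p^i\equiv 1-\overline{z^{1/p}}\pmod{\mathfrak{m}}$. Were $\overline{z^{1/p}}=1$, then $\bar{z}=\overline{z^{1/p}}^{\,p}=1$, contradicting $\bar{z}\neq 1$; hence $\overline{z^{1/p}}\neq 1$ and each factor reduces to a nonzero element of $k$, i.e.\ is a unit. Therefore $w_{p,m}(z,\gamma)\in\mathcal{O}_{K_{v,z}}^\times$, its Kummer class lies in $H^1_f(K_{v,z},\mathbb{F}_p(1))$, and the restriction lies in $H^1_f(K_{v,z},\mathbb{F}_p(1))\otimes_{\mathbb{F}_p}\mathbb{F}_p(m-1)$.

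The only genuinely delicate point is the unit computation, and its crux is that reduction modulo $\mathfrak{m}$ collapses all the $\zeta_p^i$ to $1$ simultaneously, so that every factor acquires the same residue $1-\overline{z^{1/p}}$; the passage from the hypothesis on $z$ to the needed statement about its $p$-th root $z^{1/p}$ is then immediate, via $\overline{z^{1/p}}^{\,p}=\bar{z}$. Everything else is formal once Proposition \ref{propNW} and the triviality of the $G_{K_{v,z}}$-action on $\mathbb{F}_p(m)$ are in hand.
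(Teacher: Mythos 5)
Your proof is correct and follows essentially the same route as the paper, which simply cites Proposition \ref{propNW} to identify the restriction with a Kummer class (hence a homomorphism, since $\mu_p\subset K_{v,z}$ makes the coefficients trivial) and observes that $w_{p,m}(z,\gamma)$ is a unit when $p\nmid(1-z)$. You merely fill in the details of the unit computation that the paper leaves implicit.
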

	\begin{proof}
		By Proposition \ref{propNW}, it is easy to show that the restriction is a group homorphism.
		We suppose that $p$ does not divide $1-z$. Then, $w_{p,m}(z,\gamma)$ is a $p$-unit and the restriction is contained in the finite part.
	\end{proof}
	\begin{remark}
		By Proposition \ref{propNW},
		if $z$ is contained in $\mu(K)\setminus\{1\}$,
		then $\text{\rm \pounds}^{\mathrm{\acute{e}t}}_{p,m} (z,\gamma)$
		is the mod $p$ of {\em a cyclotomic Soul\'e element} multiplied by $1/(m-1)!$
		(cf.\ \cite[Corollary 14.3.3]{W1}).
		\label{rem1}
	\end{remark}
	We use the abridged notation $\lambda_{v, z}^{(m)}=\lambda_{\mathbb{F}_p(m-1), K_{v, z}}$. By abuse of notation, we denote the restriction to $G_{K_{v, z}}$ of $\text{\rm \pounds}^{\mathrm{\acute{e}t}}_{p,m} (z,\gamma)$ by the same notation $\text{\rm \pounds}^{\mathrm{\acute{e}t}}_{p,m} (z,\gamma)$. Then, the main result of this paper is as follows:
	\begin{theorem}Let $K$ be a number field, $z$ an element of $K\setminus\{0, 1\}$, and $\gamma$ an \'etale path
		in ${\mathbb P}^1_{\overline K}\setminus\{0,1,\infty\}$ from $\overrightarrow{01}$ to $z$.
		Let $v$ be a finite place of $K$ over $p$ unramified in $K/\mathbb Q$ and $m$ a positive integer such that $1 < m < p-1$.
		We denote by $z^{1/p}$ the $p$-th root of $z$ determined by $\gamma$ and assume that $p$ does not divide $z(1-z)$. 
		Then, we have
		\begin{equation}
			\begin{split}
			&\lambda_{v, z}^{(m)}\left( \text{\rm \pounds}^{\mathrm{\acute{e}t}}_{p,m} (z,\gamma)\right)=\\
			&\left[ \frac{(1-\zeta_{p})^{ {p}-m}}{z-1}\text{\rm \pounds}_{p,m}\left(z^{1/ p}\right)+\alpha_p(\zeta_{p}-1)^{{ p}-m+1}\bmod{\bigl(p\log_p(\mathcal{O}_{K_{v, z}}^{\times})\bigr)}\right]\otimes\zeta_{p}^{\otimes{(m-1)}}
			\end{split}
		\label{eqB}\end{equation}
		in $\Lambda_{K_{v, z}}(m-1)$ for a certain element $\alpha_p\in \mathcal O_{K_{v,z}}$.
		\label{thmC}
	\end{theorem}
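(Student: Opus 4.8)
\noindent
The plan is to reduce the statement, via the Nakamura--Wojtkowiak formula (Proposition~\ref{propNW}), to an explicit $p$-adic logarithm computation, and then to a single congruence of finite sums modulo $p$. Since $p\nmid(1-z)$, Proposition~\ref{propB} guarantees that the restriction lies in $H^1_f(K_{v,z},\mathbb F_p(1))\otimes_{\mathbb F_p}\mathbb F_p(m-1)$, so $\lambda_{v,z}^{(m)}$ is defined on it; by the very definition of $\lambda_{v,z}^{(m)}=\lambda_{\mathbb F_p(m-1),K_{v,z}}$ (Definition~\ref{dfn4.8}) as $\log_{\mathbb F_p(m-1),K_{v,z}}\circ\kappa_{p,K_{v,z}}^{-1}$, Proposition~\ref{propNW} gives
\[
\lambda_{v,z}^{(m)}\!\left(\text{\rm \pounds}^{\mathrm{\acute{e}t}}_{p,m}(z,\gamma)\right)=\frac{1}{(m-1)!}\log_p\!\big(w_{p,m}(z,\gamma)\big)\otimes\zeta_p^{\otimes(m-1)},\qquad u:=z^{1/p}.
\]
The task thus becomes the evaluation of $\log_p(w_{p,m})=\sum_{i=0}^{p-1}i^{m-1}\log_p(1-u\zeta_p^{\,i})$ in $\Lambda_{K_{v,z}}(m-1)$, i.e.\ modulo $p\log_p(\mathcal O_{K_{v,z}}^\times)$ and modulo $(\zeta_p-1)^{p-m+1}$.

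\noindent
Next I would set $\pi:=\zeta_p-1$ and expand everything $\pi$-adically. Writing $1-u\zeta_p^{\,i}=(1-u)\big(1-c(\zeta_p^{\,i}-1)\big)$ with $c:=u/(1-u)$ --- here $1-u$ and $c$ are units, since $1-\bar u\neq 0$ in the residue field because $p\nmid z(1-z)$ --- one splits $\log_p(1-u\zeta_p^{\,i})=\log_p(1-u)+\log_p\!\big(1-c(\zeta_p^{\,i}-1)\big)$, the second factor being a $1$-unit whose logarithm converges. The ``constant'' part contributes $\big(\sum_i i^{m-1}\big)\log_p(1-u)$, which lies in $p\log_p(\mathcal O_{K_{v,z}}^\times)$ because $\sum_{i=0}^{p-1}i^{m-1}\equiv0\pmod p$ for $0<m-1<p-1$. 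For the rest I would expand $\log_p(1-c(\zeta_p^{\,i}-1))=-\sum_{n\ge1}\frac{c^n}{n}(\zeta_p^{\,i}-1)^n$ and regard $(\zeta_p^{\,i}-1)^n=((1+\pi)^i-1)^n$ as a polynomial in $\pi$. Because $v$ is unramified over $p$, the residue field is some $\mathbb F_q$ and $v_p(\zeta_p-1)=1/(p-1)$; moreover $(\zeta_p-1)^{p-m+1}\mathcal O_{K_{v,z}}$ is exactly the set of elements of valuation $\ge(p-m+1)/(p-1)$, so the control of the error term $\alpha_p(\zeta_p-1)^{p-m+1}$ is pure valuation bookkeeping.

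\noindent
I would then show that every $\pi$-power except the critical one is negligible. The coefficient of $\pi^{L}$ equals $-\sum_{n=1}^{L}\frac{c^n}{n}\sum_{i=0}^{p-1}i^{m-1}P_{n,L}(i)$, where $P_{n,L}(i)=[\pi^L](\zeta_p^{\,i}-1)^n$ is a polynomial of degree $L$ in $i$. Using the standard evaluation $\sum_{i=0}^{p-1}i^{d}\equiv-1\pmod p$ when $(p-1)\mid d$ and $d>0$, and $\equiv0$ otherwise (for $0\le d\le p-1$), one finds that for $L<p-m$ the integer $\sum_i i^{m-1}P_{n,L}(i)$ is divisible by $p$, so that $\pi^{L}$-coefficient contributes valuation $\ge L/(p-1)+1\ge(p-m+1)/(p-1)$; the tail $L\ge p-m+1$ is negligible for the obvious reason. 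Hence only the $\pi^{p-m}$-coefficient $\Theta$ matters, and the same power-sum evaluation reduces its image in $\mathbb F_q$ to $-[t^{p-m}]\log(1-\bar u\,e^{t})$, a finite generating-function coefficient ($e^t$ being understood as its truncation in degrees $<p$).

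\noindent
The heart of the argument, and the step I expect to be the main obstacle, is the purely combinatorial congruence
\[
-[t^{p-m}]\log(1-u\,e^{t})\equiv (m-1)!\,\frac{(-1)^{p-m}}{(u-1)^{p}}\,\text{\rm \pounds}_{p,m}(u)\pmod p,
\]
which, via $(u-1)^p\equiv u^p-1=z-1$, matches $\tfrac{1}{(m-1)!}\Theta$ with the reduction of $\frac{(1-\zeta_p)^{p-m}}{z-1}\text{\rm \pounds}_{p,m}(u)$; the difference of the two sides then lies in the maximal ideal, so after multiplication by $(\zeta_p-1)^{p-m}$ it is absorbed into $\alpha_p(\zeta_p-1)^{p-m+1}$, yielding \eqref{eqB}. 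This is precisely where finite star-multiple polylogarithms enter: expanding $\log(1-u\,e^t)$ (equivalently iterating the Riccati relation $\psi'=\psi^2-\psi$ for $\psi=(1-u\,e^t)^{-1}$) produces nested harmonic-type sums, and I would establish the identity by invoking the functional equation of finite star-multiple polylogarithms to collapse these nested sums to the single finite polylogarithm $\text{\rm \pounds}_{p,m}(u)$. Once this congruence is secured, reassembling the $\pi$-adic pieces gives the asserted equality in $\Lambda_{K_{v,z}}(m-1)$ for a suitable $\alpha_p\in\mathcal O_{K_{v,z}}$.
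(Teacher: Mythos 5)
Your overall route is the paper's route: reduce via Proposition \ref{propNW} to evaluating $\log_p(w_{p,m}(z,\gamma))$, factor $1-z^{1/p}\zeta_p^i=(1-z^{1/p})\bigl(1-c(\zeta_p^i-1)\bigr)$ (your $c=u/(1-u)$ is the paper's $1/\xi$ with $\xi=z^{-1/p}-1$), kill the constant part using $p\mid\sum_{i=0}^{p-1}i^{m-1}$, and identify the surviving critical coefficient with $\text{\rm \pounds}_{p,m}(z^{1/p})/(z-1)$ via the functional equation of finite star-multiple polylogarithms. Your generating-function identity $-[t^{p-m}]\log(1-ue^t)\equiv (m-1)!\,(-1)^{p-m}(u-1)^{-p}\text{\rm \pounds}_{p,m}(u)\pmod p$ is correct (I checked $p=5$, $m=2$) and is exactly the paper's combination of Dilcher's identity with the congruence (\ref{new fn eq}) for $\Bbbk=(\{1\}^m)$; your $\pi$-adic coefficient extraction via $\sum_i i^d\bmod p$ is a repackaging of the paper's Lemma \ref{lem1.5}. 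So the architecture is sound and essentially identical to the paper's proof of Proposition \ref{propD}.

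There is, however, one genuine gap: the dismissal of the tail of the logarithm series as ``negligible for the obvious reason.'' The coefficient of $\pi^L$ for $L\geq p$ involves the terms $-\tfrac{c^n}{n}$ with $n\geq p$, where $1/n$ is not a $p$-adic integer, so the naive bound $v_p(\pi^L)\geq(p-m+1)/(p-1)$ does not apply. Concretely, the $n=p$ term contributes $-\tfrac{c^p}{p}\sum_{i=0}^{p-1}i^{m-1}(\zeta_p^i-1)^p$, and since $\binom{p}{j}$ carries only one factor of $p$ while $\sum_i i^{m-1}\zeta_p^{ij}$ has valuation $(p-m)/(p-1)$ by Lemma \ref{lem1.5}, this term a priori has valuation exactly $(p-m)/(p-1)$ --- i.e.\ it lands at the critical order $\pi^{p-m}$ and would corrupt your leading coefficient $\Theta$. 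The paper shows it is in fact absorbed into the error because its leading part is a unit multiple of $\tfrac{p}{(\zeta_p-1)^{m-1}}\sum_{j=1}^{p-1}j^{-m}$ and $\sum_{j=1}^{p-1}j^{-m}\equiv 0\pmod p$ precisely because $p-1\nmid m$; this is where the hypothesis $m<p-1$ is actually used, and it is not a valuation-bookkeeping statement. (The terms $n=pl$ with $l\geq 2$ and the terms $p\nmid n$, $n\geq p$, do have valuation $>1$ and are genuinely negligible.) You need to add this computation to close the argument.
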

	We prove this theorem in the next section.
	By Theorem \ref{thmC},
	we have a new proof of the following well-known fact.
	\begin{corollary}[cf.\ {\cite[Corollary 3.2]{So}}]
		Assume that $p$ is an odd regular prime. Then, for any odd integer $m$ such that $1<m<p-1$, the restriction map
		\[
		\mathrm{res}_m \colon H_{\mathrm{cont}}^1({\mathbb Z}[1/p],{\mathbb Z}_p(m))\rightarrow H_{\mathrm{cont}}^1({\mathbb Q}_p,{\mathbb Z}_p(m))
		\]
		is an isomorphism.
		\label{corA'}
	\end{corollary}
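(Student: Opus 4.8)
The plan is to reduce the statement to a single mod $p$ nonvanishing that Theorem \ref{thmC} computes explicitly. First I would record the dimension counts. Since $1<m<p-1$ we have $m\not\equiv 0\pmod{p-1}$, so $H^0(\mathbb{Q}_p,\mathbb{F}_p(m))=H^0(\mathbb{Z}[1/p],\mathbb{F}_p(m))=0$; together with the local Euler characteristic formula and the global Euler characteristic formula for $\mathbb{Z}_p(m)$ over $\mathbb{Z}[1/p]$ (using that $m$ is odd, so complex conjugation acts by $-1$), this shows that both $H^1_{\mathrm{cont}}(\mathbb{Z}[1/p],\mathbb{Z}_p(m))$ and $H^1_{\mathrm{cont}}(\mathbb{Q}_p,\mathbb{Z}_p(m))$ are free $\mathbb{Z}_p$-modules of rank one. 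The regularity of $p$ enters here: it gives $H^2_{\mathrm{cont}}(\mathbb{Z}[1/p],\mathbb{Z}_p(m))=0$, and the Bockstein sequence then forces $\dim_{\mathbb{F}_p}H^1(\mathbb{Z}[1/p],\mathbb{F}_p(m))=\dim_{\mathbb{F}_p}H^1(\mathbb{Q}_p,\mathbb{F}_p(m))=1$. By Nakayama's lemma a $\mathbb{Z}_p$-linear map between free rank-one modules is an isomorphism as soon as its reduction modulo $p$ is nonzero, so it suffices to prove that the mod $p$ restriction $\overline{\mathrm{res}}_m\colon H^1(\mathbb{Z}[1/p],\mathbb{F}_p(m))\to H^1(\mathbb{Q}_p,\mathbb{F}_p(m))$ is nonzero.

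To produce a class on which $\overline{\mathrm{res}}_m$ is nonzero I would use the mod $p$ \'etale polylogarithm as an explicit global element. Take $z=-1$, which is a $p$-unit with $1-z=2$ again a $p$-unit for odd $p$; by Remark \ref{rem1} the class $\text{\rm \pounds}^{\mathrm{\acute{e}t}}_{p,m}(-1,\gamma)$ is, up to the unit $1/(m-1)!$, the reduction of a cyclotomic Soul\'e element, hence lies in $H^1(\mathbb{Z}[1/p],\mathbb{F}_p(m))$ and is unramified outside $p$. Here $K_{p,-1}=\mathbb{Q}_p(\zeta_p)$. Restricting to $G_{\mathbb{Q}_p}$ and then to $G_{K_{p,-1}}$, Proposition \ref{propB} puts the resulting class in $H^1_f(K_{p,-1},\mathbb{F}_p(1))\otimes_{\mathbb{F}_p}\mathbb{F}_p(m-1)$, and Theorem \ref{thmC} computes $\lambda^{(m)}_{p,-1}$ of it in terms of $\text{\rm \pounds}_{p,m}((-1)^{1/p})$. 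Since $(-1)^{1/p}$ reduces to $-1$ in the residue field, this value reduces to $\text{\rm \pounds}_{p,m}(-1)=\sum_{n=1}^{p-1}(-1)^n n^{-m}$. If this residue is a unit, then because the leading term of the formula carries valuation $(p-m)/(p-1)<1$ while the error $\alpha_p(\zeta_p-1)^{p-m+1}$ has strictly larger valuation, the class $\lambda^{(m)}_{p,-1}(\text{\rm \pounds}^{\mathrm{\acute{e}t}}_{p,m}(-1,\gamma))$ is nonzero in $\Lambda_{K_{p,-1}}(m-1)$; consequently the underlying class is nonzero after restriction to $G_{K_{p,-1}}$, a fortiori nonzero in $H^1(\mathbb{Q}_p,\mathbb{F}_p(m))$, and $\overline{\mathrm{res}}_m\neq 0$.

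It therefore remains to secure the nonvanishing of the finite polylogarithm residue. A standard power-sum computation via Euler polynomials gives $\text{\rm \pounds}_{p,m}(-1)\equiv -\tfrac{2(2^{p-m}-1)}{p-m}B_{p-m}\pmod p$, where $p-m$ is even with $2\le p-m\le p-3$. Because $p$ is regular, $B_{p-m}$ is a $p$-adic unit, so the residue is nonzero provided $2^{p-m}\not\equiv 1\pmod p$, which settles the generic case.

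The main obstacle is the remaining degenerate range of exponents $m$ for which $2^{p-m}\equiv 1\pmod p$ (these do occur, e.g.\ for $p=31$), where the $z=-1$ specialization yields a vanishing finite polylogarithm and hence no information. To cover these $m$ I would replace $z=-1$ by a root of unity $\zeta$ of order prime to $p$ chosen so that $\text{\rm \pounds}_{p,m}(\zeta)\neq 0$ in $\overline{\mathbb{F}_p}$ — such a $\zeta$ exists because $\text{\rm \pounds}_{p,m}(t)$ is a nonzero polynomial modulo $p$ — work with the cyclotomic Soul\'e element over $K=\mathbb{Q}(\zeta)$, and descend to $\mathbb{Q}$ by a corestriction argument using the compatibility of localization with $\mathrm{cor}_{K/\mathbb{Q}}$. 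Making this descent unconditional — controlling that $\mathrm{cor}_{K_w/\mathbb{Q}_p}$ does not annihilate the nonzero local class and that $p\nmid[K:\mathbb{Q}]$ — is the delicate point, and I expect this, rather than the generic Bernoulli nonvanishing, to be where the real work lies.
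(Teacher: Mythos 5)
Your argument follows, through its main line, exactly the paper's own proof: reduce via rank-one freeness (Lemmas \ref{rem2} and \ref{rem3}) to the nonvanishing of the mod $p$ restriction, realize a global class as the Soul\'e-type element $\text{\rm \pounds}^{\mathrm{\acute{e}t}}_{p,m}(-1,\gamma)$, restrict to $G_{\mathbb{Q}_p(\zeta_p)}$, and use Theorem \ref{thmC} together with Corollary \ref{cor2.6} to see that everything comes down to $\text{\rm \pounds}_{p,m}(-1)\not\equiv 0\pmod p$, which is then attacked through a Bernoulli congruence. (Your expression $-\tfrac{2(2^{p-m}-1)}{p-m}B_{p-m}$ agrees mod $p$ with the paper's quoted $\tfrac{1-2^{m-1}}{2^{m-2}m}B_{p-m}$, since $2^{p-m}\equiv 2^{1-m}$ and $p-m\equiv -m$.)

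The ``delicate point'' you flag at the end is a genuine issue, and it is worth recording that the paper does not resolve it either: the proof there concludes directly that regularity of $p$ forces $\text{\rm \pounds}_{p,m}(-1)\not\equiv 0\pmod p$, silently discarding the factor $1-2^{m-1}$ in the congruence, which vanishes mod $p$ exactly when $\mathrm{ord}_p(2)$ divides $m-1$. This degenerate case does occur under the hypotheses of the corollary: for the regular prime $p=31$ one has $2^{5}\equiv 1\pmod{31}$, so for the odd exponents $m=11$ and $m=21$ the specialization at $z=-1$ gives $\text{\rm \pounds}_{31,m}(-1)\equiv 0\pmod{31}$, the coefficient $a_{p-m}$ vanishes, and Corollary \ref{cor2.6} yields no information. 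So your attempt matches the paper wherever the paper's argument is actually complete, correctly isolates the exceptional case $2^{m-1}\equiv 1\pmod p$ that the paper overlooks, and the corestriction workaround you sketch (which the paper does not attempt) is indeed where the remaining work would lie; as written, neither your argument nor the paper's covers that case. The corollary itself is of course Soul\'e's theorem and is true; the gap concerns only this particular derivation from Theorem \ref{thmC}.
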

	This corollary will be proved in Section \ref{subsec:Appriations of Theorem C}.
	\section{Proof of Theorem \ref{thmC}}
	\label{sec:Proof of Theorem C}
	In this section, 
	we show the congruence (\ref{eqB})
	for fixed $m,z$, and $v$ satisfying the conditions of Theorem \ref{thmC}. Then, $K_v$ is unramified over ${\mathbb Q}_p$ by assumption.
	Theorem \ref{thmC} (\ref{eqB}) is a direct consequence of Proposition \ref{propNW} and the following proposition:
	\begin{proposition}
		We have the following congruence in $\mathcal O_{K_{v, z}}:$
		\[
		\log_p(w_{p,m} (z,\gamma))\equiv (m-1)!\frac{(1-\zeta_p)^{p-m}}{z-1}
		{\text{\rm \pounds}_{p,m}(z^{1/p})} \ \ \Bigl( \mathrm{mod}\ (\zeta_p-1)^{p-m+1}\Bigr).
		\]
	\label{propD}
	\end{proposition}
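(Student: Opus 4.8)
The plan is to evaluate $\log_p(w_{p,m}(z,\gamma))$ directly and to read off its expansion in powers of $\pi:=\zeta_p-1$, normalizing a valuation $w$ on $K_{v,z}$ by $w(\pi)=1$, so that $w(p)=p-1$ because $v$ is unramified. Writing $t:=z^{1/p}$, the assumption $p\nmid z(1-z)$ forces the residue of $t$ to be different from $0$ and $1$; hence each factor $1-t\zeta_p^i$ is a unit and $\log_p$ is defined on it. First I would write $\log_p(w_{p,m})=\sum_{i=0}^{p-1}i^{m-1}\log_p(1-t\zeta_p^i)$ and factor $1-t\zeta_p^i=(1-t)(1-u_i)$ with $u_i:=t(\zeta_p^i-1)/(1-t)$; since $w(u_i)\ge 1$, the series $\log_p(1-u_i)=-\sum_{n\ge1}u_i^n/n$ converges. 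The $\log_p(1-t)$ part comes multiplied by $\sum_{i=0}^{p-1}i^{m-1}$, which is $\equiv 0\pmod p$ because $1<m<p-1$ gives $(p-1)\nmid(m-1)$; being divisible by $p$, it vanishes modulo $\pi^{p-m+1}$.

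The core is the double sum $-\sum_{n\ge1}\frac1n\left(\frac{t}{1-t}\right)^n T_n$, where $T_n:=\sum_{i=1}^{p-1}i^{m-1}(\zeta_p^i-1)^n$. Expanding $\zeta_p^i-1=\sum_{k\ge1}\binom{i}{k}\pi^k$ exhibits $T_n$ as a power series in $\pi$ whose coefficients are power sums $\sum_{i=1}^{p-1}i^{e}$, and I would evaluate these by the congruence $\sum_{i=1}^{p-1}i^{e}\equiv-1\pmod p$ when $(p-1)\mid e$ and $\equiv 0$ otherwise. A degree count then shows that, for $p\nmid n$, every coefficient of $\pi^{l}$ with $l<p-m$ is divisible by $p$ and hence negligible modulo $\pi^{p-m+1}$, so that only the coefficient of $\pi^{p-m}$ survives; passing to the leading symbol $\zeta_p^i-1\rightsquigarrow e^{i\pi}-1$ identifies that coefficient with $-n!\,S(p-m,n)/(p-m)!$, where $S(\cdot,\cdot)$ are Stirling numbers of the second kind. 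For $n$ divisible by $p$ the governing multinomial coefficients are themselves divisible by $p$, which raises the valuation beyond $p-m$ and eliminates these blocks. Collecting the survivors gives $\log_p(w_{p,m})\equiv\frac{\pi^{p-m}}{(p-m)!}\,\Phi(t)\pmod{\pi^{p-m+1}}$, where $\Phi(t)=\sum_{n=1}^{p-m}(n-1)!\,S(p-m,n)\left(\frac{t}{1-t}\right)^n$ is precisely a finite star-multiple polylogarithm.

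To finish I would invoke the functional equation of finite star-multiple polylogarithms from \ref{sec:app}, which amounts to the power-series identity $\sum_{n\ge1}(n-1)!\,S(N,n)\left(\frac{t}{1-t}\right)^n=\sum_{j\ge1}j^{N-1}t^j$; taking $N=p-m$ and using Fermat's congruence $j^{p-m-1}\equiv j^{-m}\pmod p$ collapses $\Phi(t)$ to $\text{\rm \pounds}_{p,m}(t)/(1-t^p)$ as rational functions modulo $p$. Since $1-t$ and $1-t^p=1-z$ are units, I may evaluate at the actual $t$ and use $t^p=z$ to obtain, in the residue field, $\Phi(t)\equiv\text{\rm \pounds}_{p,m}(z^{1/p})/(1-z)$. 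Comparing the coefficients of $\pi^{p-m}$ with the claimed right-hand side, and using $(1-\zeta_p)^{p-m}=(-1)^{p-m}\pi^{p-m}$, the proof reduces to the Wilson-type congruence $(m-1)!\,(p-m)!\equiv(-1)^m\pmod p$ together with $(-1)^{p-m+1}=(-1)^m$ for odd $p$.

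The main obstacle is the valuation bookkeeping of the second step: showing that, modulo $\pi^{p-m+1}$, everything except the single coefficient of $\pi^{p-m}$ (for each $n\le p-m$ prime to $p$) vanishes, and in particular taming the borderline $n=p$ block, whose naive valuation is only $1$. This is exactly where the finite star-multiple polylogarithms surface unexpectedly, and where their functional equation is indispensable for recognizing the surviving contribution as the single finite polylogarithm $\text{\rm \pounds}_{p,m}(z^{1/p})$.
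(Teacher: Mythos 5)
Your overall architecture coincides with the paper's: the same factorization $1-z^{1/p}\zeta_p^i=(1-z^{1/p})(1-u_i)$ (the paper's $\xi=z^{-1/p}-1$ satisfies $1/\xi=t/(1-t)$), the same disposal of the $\log_p(1-z^{1/p})$ part via $\sum_{i}i^{m-1}\equiv 0\pmod p$, the same reduction to the sums $T_n=\sum_i i^{m-1}(\zeta_p^i-1)^n$, and the same final appeal to a functional-equation--type identity. Your middle step is genuinely different and, as far as I can check, correct: where the paper uses the exponential-sum congruence of Lemma \ref{lem1.5} together with Dilcher's identity and the Hoffman-duality functional equation (\ref{new fn eq}), you expand $\zeta_p^i-1=\sum_k\binom{i}{k}(\zeta_p-1)^k$, kill every coefficient of $(\zeta_p-1)^l$ with $l<p-m$ by the power-sum congruence $\sum_{i=1}^{p-1}i^e\equiv 0\pmod p$ for $(p-1)\nmid e$, and identify the surviving coefficient through Stirling numbers; the identity $\sum_n(n-1)!\,S(N,n)\bigl(t/(1-t)\bigr)^n=\sum_{j\ge 1}j^{N-1}t^j$ (a consequence of the Stirling recurrence) then does the job of Dilcher plus Hoffman duality, and your Wilson-type constant $(m-1)!\,(p-m)!\equiv(-1)^m$ matches the paper's normalization. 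One point you pass over is the integrality of $\log_p(1-z^{1/p})$, which is not automatic since $K_{v,z}/K_v(\zeta_p)$ may be wildly ramified; the paper devotes Lemma \ref{lem1.6} to it.

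The genuine gap is your treatment of the $n=p$ block. You assert it is eliminated because ``the governing multinomial coefficients are themselves divisible by $p$, which raises the valuation beyond $p-m$.'' That mechanism only raises the valuation \emph{to} $p-m$, not beyond it. Indeed, $(\zeta_p^i-1)^p=\sum_{j=1}^{p-1}\binom{p}{j}(-1)^{p-j}\zeta_p^{ij}$ (the $j=0$ and $j=p$ terms cancel), the quotients $\frac{1}{p}\binom{p}{j}\equiv\frac{(-1)^{j-1}}{j}\pmod p$ are units, and each inner sum $\sum_i i^{m-1}\zeta_p^{ij}$ has valuation exactly $p-m$ by Lemma \ref{lem1.5}; hence $\frac{1}{p}T_p$ contributes a priori at order $(\zeta_p-1)^{p-m}$, with coefficient a unit multiple of $\sum_{j=1}^{p-1}j^{-m}$. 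The block disappears only because of the further cancellation $\sum_{j=1}^{p-1}j^{-m}\equiv 0\pmod p$, valid since $(p-1)\nmid m$ under the hypothesis $1<m<p-1$. You correctly single this block out as the main obstacle, but the reason you give for its disappearance is wrong, so this step would fail as written and must be replaced by the power-sum cancellation just described --- which is exactly how the paper's proof of Proposition \ref{propD} handles it.
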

	We prepare some auxiliary lemmas to prove Proposition \ref{propD}.
	\begin{lemma}Let $n$ and $j$ be positive integers less than $p$. Then, the following congruence holds in ${\mathbb Z}_{(p)}[\zeta_p]:$
		\begin{equation*}
			\sum_{i=0}^{p-1}i^n\zeta_p^{ij}\equiv \frac{(-1)^{n-1}n!p}{j^n(\zeta_p-1)^n}\ \hspace{10mm} \left(\mathrm{mod}\ \frac{p}{(\zeta_p-1)^{n-1}}\right).
		\end{equation*}
		\label{lem1.5}
	\end{lemma}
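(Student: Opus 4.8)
The plan is to work in the discrete valuation ring $R=\mathbb Z_{(p)}[\zeta_p]$, whose maximal ideal is generated by the uniformizer $\pi:=\zeta_p-1$ and which satisfies $v(\pi)=1$, $v(p)=p-1$ for the normalized valuation $v$ (recall $(p)=(\pi)^{p-1}$). Since $n-1\le p-1$, the modulus $p/(\zeta_p-1)^{n-1}$ generates the ideal $(\pi^{p-n})$, so the assertion is exactly a congruence modulo $\pi^{p-n}$. Writing $S_n:=\sum_{i=0}^{p-1}i^n\zeta_p^{ij}$ and $\rho:=\zeta_p^{j}-1$ (note $v(\rho)=1$ and $\rho\equiv j\pi\pmod{\pi^2}$ because $p\nmid j$), I would first expand $\zeta_p^{ij}=(1+\rho)^i=\sum_{k}\binom{i}{k}\rho^k$ and interchange the two finite sums to obtain
\[
S_n=\sum_{k=0}^{p-1}\rho^k A_{n,k},\qquad A_{n,k}:=\sum_{i=0}^{p-1}i^n\binom{i}{k}\in\mathbb Z_{(p)}.
\]

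The crucial observation is that each $A_{n,k}$ is a $\mathbb Z_{(p)}$-linear combination of the integer power sums $\sum_{i=0}^{p-1}i^d$ with $n\le d\le n+k$, and such a power sum is $\equiv 0\pmod p$ whenever $(p-1)\nmid d$. Hence if $n+k<p-1$ then every exponent $d$ in range lies in $[1,p-2]$, so $A_{n,k}\equiv0\pmod p$; being an element of $pR$, it then has valuation $v(A_{n,k})\ge p-1$. Consequently the terms with $k\le p-2-n$ satisfy $v(\rho^kA_{n,k})\ge p-1\ge p-n$, while the terms with $k\ge p-n$ satisfy $v(\rho^kA_{n,k})\ge k\ge p-n$; both families vanish modulo $\pi^{p-n}$. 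Only the index $k=p-1-n$ survives, giving
\[
S_n\equiv \rho^{\,p-1-n}\,A_{n,p-1-n}\pmod{\pi^{p-n}}.
\]

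It then remains to evaluate this single term to the required precision. The polynomial $i^n\binom{i}{p-1-n}$ has degree $p-1$ with leading coefficient $1/(p-1-n)!$, so only the $i^{p-1}$ contribution survives modulo $p$ and $A_{n,p-1-n}\equiv-1/(p-1-n)!\pmod p$; Wilson's theorem in the form $(p-1-n)!\equiv(-1)^{n-1}/n!\pmod p$ then yields $A_{n,p-1-n}\equiv(-1)^n n!\pmod p$. Combining this with $\rho^{\,p-1-n}\equiv j^{\,p-1-n}\pi^{p-1-n}\pmod{\pi^{p-n}}$ and $j^{p-1}\equiv1\pmod p$ gives $S_n\equiv(-1)^n n!\,j^{-n}\pi^{p-1-n}\pmod{\pi^{p-n}}$. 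On the other hand, the factorization $p=\prod_{k=1}^{p-1}(1-\zeta_p^k)$ together with Wilson's theorem gives $p\equiv-\pi^{p-1}\pmod{\pi^p}$, so the right-hand side $\tfrac{(-1)^{n-1}n!\,p}{j^n(\zeta_p-1)^n}$ is likewise congruent to $(-1)^n n!\,j^{-n}\pi^{p-1-n}$ modulo $\pi^{p-n}$, which matches and completes the comparison.

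I expect the main obstacle to be the valuation bookkeeping in the second paragraph rather than any single computation: the decisive point is that $A_{n,k}$ is an honest element of $\mathbb Z_{(p)}$ coming from $\mathbb Z$, so that mere divisibility by $p$ already forces the large valuation jump $v(A_{n,k})\ge p-1$, and it is precisely this jump that collapses the whole sum to the single index $k=p-1-n$. Once this localization of the sum is justified, the surviving term is extracted by the two elementary congruences above (Wilson's theorem and the standard factorization of $p$ in $\mathbb Z[\zeta_p]$), so the remainder is routine.
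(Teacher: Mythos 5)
Your argument is correct, and it takes a genuinely different route from the paper. The paper proves the lemma by induction on $n$: it first telescopes $(\zeta_p^j-1)\sum_i i\zeta_p^{ij}=p$ to settle $n=1$, then in the inductive step multiplies the sum by $(\zeta_p^j-1)$, telescopes again, and feeds the binomial expansion of $(i+1)^n$ back into the inductive hypothesis, converting $(\zeta_p^j-1)^n$ into $j^n(\zeta_p-1)^n$ only at the very end. You instead expand $\zeta_p^{ij}=(1+\rho)^i$ with $\rho=\zeta_p^j-1$, swap the sums to get $S_n=\sum_k\rho^kA_{n,k}$, and use the valuation jump $v(A_{n,k})\ge p-1$ (forced by $p\mid A_{n,k}$ in $\mathbb{Z}_{(p)}$ whenever $n+k<p-1$, via the vanishing of power sums $\sum_i i^d \bmod p$ for $(p-1)\nmid d$) to kill everything except $k=p-1-n$; the surviving coefficient is then evaluated by Wilson's theorem, and the right-hand side is matched using $p\equiv-(\zeta_p-1)^{p-1}\pmod{(\zeta_p-1)^p}$ (which is the paper's Lemma \ref{lem1.7}). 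All the steps check out: the modulus is the ideal $(\pi^{p-n})$ in the discrete valuation ring $\mathbb{Z}_{(p)}[\zeta_p]$, the identification of $A_{n,p-1-n}\equiv-1/(p-1-n)!\equiv(-1)^nn!\pmod p$ is right, and the replacements $\rho^{p-1-n}\equiv j^{-n}\pi^{p-1-n}$ introduce errors of valuation at least $p-n$. What your approach buys is a closed, non-inductive computation that pinpoints exactly which term of the binomial expansion carries the answer; what the paper's induction buys is that it avoids any valuation bookkeeping and only ever manipulates congruences of the displayed shape, at the cost of a slightly fiddly inductive step.
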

	\begin{proof}We remark that $\zeta_p^j-1$ is congruent to $j(\zeta_p-1)$ mod $(\zeta_p-1)^2$. Hence, for some $a,b,c\in {\mathbb Z}_{(p)}[\zeta_p]$, we obtain the following equalities:
		\begin{equation*}
			\begin{split}
				\frac{1}{(\zeta_p^j-1)^n}&=\frac{1}{(j(\zeta_p-1)+a(\zeta_p-1)^2)^n}=\frac{1}{j^n(\zeta_p-1)^n(1+b(\zeta_p-1))}\\
				&=\frac{1}{j^n(\zeta_p-1)^n}(1+c(\zeta_p-1))=\frac{1}{j^n(\zeta_p-1)^n}+\frac{c}{j^n(\zeta_p-1)^{n-1}}.
			\end{split}
		\end{equation*}
		In particular, $p/(\zeta_p^j-1)^n$ is congruent to $p/(j^n(\zeta_p-1)^n)$ mod $p/(\zeta_p-1)^{n-1}$.
		Therefore, it is sufficient to show the following congruence:
		\begin{eqnarray}
			\sum_{i=0}^{p-1}i^n\zeta_p^{ij}\equiv \frac{(-1)^{n-1}n!p}{(\zeta_p^j-1)^n}\ \hspace{10mm} \left(\mathrm{mod}\ \frac{p}{(\zeta_p-1)^{n-1}}\right).
			\label{eq1}
		\end{eqnarray}
		We show the congruence (\ref{eq1}) by induction on $n$. 
		
		First, we prove the case $n=1$. We have the equalities
		\begin{equation*}
			\begin{split}
				(\zeta_p^j-1)\sum_{i=0}^{p-1}i\zeta_p^{ij}&=\sum_{i=0}^{p-1}i\zeta_p^{(i+1)j}-\sum_{i=0}^{p-1}i\zeta_p^{ij}\\
				&=\sum_{i=0}^{p-1}(i+1)\zeta_p^{(i+1)j}-\sum_{i=0}^{p-1}\zeta_p^{(i+1)j}-\sum_{i=0}^{p-1}i\zeta_p^{ij}\\
				&=\sum_{i=0}^{p-1}(i+1)\zeta_p^{(i+1)j}-\sum_{i=0}^{p-1}i\zeta_p^{ij}=p
			\end{split}
		\end{equation*}
		since $\sum_{i=0}^{p-1}\zeta_p^{(i+1)j}=0$. Thus, the assertion of the lemma holds for the case $n=1$.
		
		Next, we assume that $n$ is greater than $1$ and that the assertion holds for any positive integer less than $n$. Then, the following congruence holds:
		\begin{equation*}
			\begin{split}
				(\zeta_p^j-1)\sum_{i=0}^{p-1}i^n\zeta_p^{ij}&=\sum_{i=0}^{p-1}i^n\zeta_p^{(i+1)j}-\sum_{i=0}^{p-1}i^n\zeta_p^{ij}\\
				&=\sum_{i=0}^{p-1}(i+1)^n\zeta_p^{(i+1)j}-\sum_{i=0}^{p-1}\sum_{s=0}^{n-1}\binom{n}{s}i^s\zeta_p^{(i+1)j}-\sum_{i=0}^{p-1}i^n\zeta_p^{ij}\\
				&=\sum_{i=0}^{p-1}(i+1)^n\zeta_p^{(i+1)j}-\sum_{i=0}^{p-1}i^n\zeta_p^{ij}-\sum_{s=0}^{n-1}\binom{n}{s}\zeta_p^j\sum_{i=0}^{p-1}i^s\zeta_p^{ij}\\
				&\equiv p^n-\zeta_p^j\sum_{s=0}^{n-1}\binom{n}{s}\frac{(-1)^{s-1}s!p}{(\zeta_p^j-1)^{s}}\ \ \  \left(\mathrm{mod}\ \frac{p}{(\zeta_p-1)^{n-2}}\right).
			\end{split}
		\end{equation*}
		Note that $p/(\zeta_p^j-1)^{s}$ is divided by $p/(\zeta_p-1)^{n-2}$ for any non-negative integer $s$ less than $n-1$.
		Therefore, we obtain the congruences 
		\begin{equation*}
			(\zeta_p^j-1)\sum_{i=0}^{p-1}i^n\zeta_p^{ij}\equiv -\zeta_p^jn\frac{(-1)^{n-2}(n-1)!p}{(\zeta_p^j-1)^{n-1}} 
			\equiv \frac{(-1)^{n-1}n!p}{(\zeta_p^j-1)^{n-1}}\ \ \ \left(\mathrm{mod}\ \frac{p}{(\zeta_p-1)^{n-2}}\right).
		\end{equation*}
		This completes the proof of the lemma.
	\end{proof}
	\begin{lemma} The element $\log_p(1-z^{1/p})$ is contained in $\mathcal{O}_{K_{v, z}}$.
		\label{lem1.6} 
	\end{lemma}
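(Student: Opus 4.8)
The plan is to factor $1-z^{1/p}$ so as to isolate the part responsible for possible non-integrality of its $p$-adic logarithm. First I would note that $1-z^{1/p}$ is a unit of $\mathcal O_{K_{v,z}}$: since $p\nmid z(1-z)$ the residues $\bar z,\,1-\bar z$ are nonzero, and as the residue field of $K_v$ is perfect the Frobenius is bijective on it, so $\overline{z^{1/p}}=\bar z^{1/p}\ne 1$ and $\overline{1-z^{1/p}}\ne 0$. Let $q$ be the cardinality of the residue field of $K_v$ and let $\eta\in\mu_{q-1}\subset K_v$ be the unique prime-to-$p$ root of unity with $\eta^p=\omega(z)$, where $\omega(z)$ is the Teichm\"uller representative of $z$; then $\bar\eta=\overline{z^{1/p}}$, so $1-\eta$ is again a unit, and one has the factorization
\[
1-z^{1/p}=(1-\eta)(1-t),\qquad t:=\frac{z^{1/p}-\eta}{1-\eta}.
\]
Since $\log_p$ is a homomorphism, it then suffices to prove $\log_p(1-\eta)\in\mathcal O_{K_{v,z}}$ and $\log_p(1-t)\in\mathcal O_{K_{v,z}}$ separately.

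The first of these is immediate: $1-\eta$ is a unit in the unramified field $K_v$, so writing $1-\eta=\zeta(1+x)$ with $\zeta$ a root of unity and $x\in\mathfrak m_{K_v}$, the $n$-th term of $\log_p(1+x)$ has $v_{K_v}$-valuation at least $n-v_p(n)\ge 1$, whence $\log_p(1-\eta)\in\mathfrak m_{K_v}$. The crux is the estimate $\mathrm{ord}(t)\ge p-1$, where $\mathrm{ord}$ is the normalized valuation of $K_{v,z}$; as $1-\eta$ is a unit this equals $\mathrm{ord}(z^{1/p}-\eta)=\mathrm{ord}(s)$ for $s:=z^{1/p}/\eta-1$, which satisfies $(1+s)^p=z/\omega(z)=1+r$ with $r:=z/\omega(z)-1$ and $v_{K_v}(r)\ge 1$. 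I would prove the bound by distinguishing whether the Kummer extension $K_{v,z}/K_v(\mu_p)$ is ramified. A short computation shows it is ramified exactly when $v_{K_v}(r)=1$, and is trivial when $v_{K_v}(r)\ge 2$ (for then $1+r$ already lies in $(K_v(\mu_p)^\times)^p$). In the ramified case $\mathrm{ord}(p)=p(p-1)=\mathrm{ord}(r)$, and comparing the lowest terms $s^p$ and $ps$ of $(1+s)^p-1$ shows that $s^p$ dominates, so $p\,\mathrm{ord}(s)=\mathrm{ord}(r)$ and $\mathrm{ord}(s)=p-1$. In the trivial case $\mathrm{ord}(p)=p-1$ and $\mathrm{ord}(r)=(p-1)v_{K_v}(r)\ge 2(p-1)$, and now $ps$ dominates, giving $\mathrm{ord}(s)=\mathrm{ord}(r)-(p-1)\ge p-1$. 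In both cases $\mathrm{ord}(t)\ge p-1$.

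Finally, with $\mathrm{ord}(t)\ge p-1$ and $\mathrm{ord}(p)=e\le p(p-1)$, every term of $\log_p(1-t)=-\sum_n t^n/n$ satisfies
\[
\mathrm{ord}(t^n/n)=n\,\mathrm{ord}(t)-e\,v_p(n)\ge (p-1)\bigl(n-p\,v_p(n)\bigr)\ge 0,
\]
since, writing $n=p^km$ with $p\nmid m$, one has $n-p\,v_p(n)\ge p^k-pk\ge 0$ with minimum $0$ at $n=p$. Hence $\log_p(1-t)\in\mathcal O_{K_{v,z}}$, and adding the two contributions yields $\log_p(1-z^{1/p})\in\mathcal O_{K_{v,z}}$. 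I expect the main obstacle to be the ramification bookkeeping in the middle step: the naive estimate only gives $\mathrm{ord}(s)\ge 1$, which is too weak (the term $t^p/p$ would then have negative valuation in the ramified case), so it is essential to use that $z^{1/p}$ is congruent to the prime-to-$p$ root of unity $\eta$ modulo $\mathfrak m^{p-1}$ --- precisely the content of the two-case computation.
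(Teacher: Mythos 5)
Your factorization is exactly the paper's: with $\eta=\omega(z)^{1/p}=\zeta^{1/p}$ and $u=z/\omega(z)$, your $1-z^{1/p}=(1-\eta)(1-t)$ with $t=\eta s/(1-\eta)$, $s=u^{1/p}-1$, is the identity $1-z^{1/p}=(1-\zeta^{1/p})(1-\zeta'\varpi)$ used in the paper, and the treatment of the factor $1-\eta$ is the same. The difference is in how the valuation of $s$ is bounded, and here your argument has a genuine flaw. In the ``trivial'' case $v_{K_v}(r)\ge 2$ you assert that $ps$ dominates in $(1+s)^p-1=r$, hence $\mathrm{ord}(s)=\mathrm{ord}(r)-(p-1)\ge p-1$. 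This is false in general: $1+s=z^{1/p}/\eta$ is whichever $p$-th root of $u$ the path $\gamma$ selects, so it may equal $\zeta_p^a u_0^{1/p}$ with $a\not\equiv 0$, where $u_0^{1/p}$ is the principal root. Then $\mathrm{ord}(s)=\mathrm{ord}(\zeta_p^a-1)=1$, the terms $ps$ and $s^p$ both have valuation $p$ and cancel to leading order, and the claimed bound $\mathrm{ord}(s)\ge p-1$ fails outright. (Your ramified case is fine, since there $\mathrm{ord}(s)<p$ forces $s^p$ to dominate and gives $\mathrm{ord}(s)=p-1$ exactly.)

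The lemma survives because in the problematic case $K_{v,z}=K_v(\mu_p)$ has $e=p-1$, and your final estimate only needs $\mathrm{ord}(t^n/n)\ge n-(p-1)v_p(n)\ge 1$, i.e.\ $\mathrm{ord}(t)\ge 1$, which does hold since $\overline{1+s}$ is a $p$-th root of $1$ in a field of characteristic $p$. So the proof can be repaired by replacing the two-case ramification analysis with the single uniform bound $v_p(s)\ge 1/p$ (valuation normalized by $v_p(p)=1$): if $v_p(s)<1/p$ then $v_p(s^p)<1\le v_p\bigl(\binom{p}{j}s^j\bigr)$ for $j<p$, so $v_p(r)=pv_p(s)<1$, contradicting $u\in 1+p\mathcal{O}_{K_v}$. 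This bound is independent of the choice of root and of the ramification of $K_{v,z}/K_v(\mu_p)$, and it immediately gives $v_p(t^n/n)\ge n/p-v_p(n)\ge 0$. That is precisely the paper's argument; your route is not wrong in spirit, but the refined claim $\mathrm{ord}(t)\ge p-1$ is both stronger than needed and not always true.
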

	\begin{proof}
		We write $z=\zeta u$ with some $\zeta\in \mu(K_v)\setminus\{1\}$ and some principal unit $u \in 1+p\mathcal{O}_{K_v}$.
		Put  $\zeta':=\zeta^{1/p}/(1-\zeta^{1/p})$ and $\varpi:=u^{1/p}-1$.
		One can check that the $p$-adic valuation of $\varpi$ is greater than or equal to $1/p$.
		Then, $1-z^{1/p}=(1-\zeta^{1/p})(1-\zeta' \varpi)$ and we have 
		\[
		\log_p(1-z^{1/p})=\log_p(1-\zeta^{1/p})-\sum_{n=1}^\infty \frac{(\zeta'\varpi)^n}{n}.
		\]
		Let $v_p$ be the $p$-adic valuation of $K_{v, z}$ normalized as $v_p(p)=1$. Since $v_p((\zeta'\varpi)^n/n)= v_p(\varpi^n/n)\geq 0$ and the image of $\mathcal O_{K_v}^\times$ under $\log_p$
		is contained in $p\mathcal O_{K_v}$,
		we obtain the conclusion of the lemma.
	\end{proof}
	\begin{lemma}
		We have the following congruence in  ${\mathbb Z}_{(p)}[\zeta_p]:$
		\[
		(\zeta_p-1)^{p-1}\equiv -p\ (\mathrm{mod}\ p(\zeta_p-1)).
		\]
		\label{lem1.7}
	\end{lemma}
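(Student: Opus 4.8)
The plan is to read the congruence directly off the minimal polynomial of $\zeta_p$, working in $\mathbb Z[\zeta_p]\subset\mathbb Z_{(p)}[\zeta_p]$. Set $\pi:=\zeta_p-1$, so that $\zeta_p=1+\pi$. Since $\zeta_p$ is a root of the $p$-th cyclotomic polynomial $\Phi_p(x)=(x^p-1)/(x-1)=\sum_{k=0}^{p-1}x^k$, I would substitute $x=1+\pi$ and expand $(1+\pi)^p-1=\sum_{k=1}^p\binom pk\pi^k$ by the binomial theorem to obtain
\[
0=\Phi_p(1+\pi)=\frac{(1+\pi)^p-1}{\pi}=\sum_{k=1}^{p}\binom pk\pi^{k-1}=p+\binom p2\pi+\cdots+\binom{p}{p-1}\pi^{p-2}+\pi^{p-1}.
\]
Rearranging this identity gives $\pi^{p-1}+p=-\sum_{k=2}^{p-1}\binom pk\pi^{k-1}$.

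The key step is then to observe that every summand on the right lies in the ideal $(p\pi)$: for each index $2\le k\le p-1$ the binomial coefficient $\binom pk$ is divisible by $p$, while $\pi^{k-1}$ carries at least one factor of $\pi$ since $k-1\ge1$. Hence the entire right-hand side is divisible by $p\pi$, which yields $\pi^{p-1}\equiv -p\pmod{p\pi}$, exactly as claimed.

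As an alternative route one could start from $\Phi_p(1)=p=\prod_{i=1}^{p-1}(\zeta_p^i-1)$, write each factor as $\pi\cdot\frac{\zeta_p^i-1}{\zeta_p-1}$ with $\frac{\zeta_p^i-1}{\zeta_p-1}\equiv i\pmod\pi$, and use Wilson's theorem $(p-1)!\equiv-1\pmod p$ to pin down the unit $p/\pi^{p-1}$ modulo $\pi$; the cyclotomic-polynomial computation above is cleaner, however, as it avoids inverting this unit. There is no genuine obstacle in this lemma; the only point requiring care is verifying that the middle terms land in $(p\pi)$ and not merely in $(p)$, since it is precisely the extra factor $\pi^{k-1}$ with $k\ge2$ that produces the stated modulus $p(\zeta_p-1)$ rather than just $p$.
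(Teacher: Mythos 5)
Your proof is correct and is precisely the argument the paper intends: the paper's one-line proof cites the fact that $\zeta_p-1$ is a root of $((X+1)^p-1)/X$, and your binomial expansion with the observation that the middle terms $\binom{p}{k}\pi^{k-1}$ for $2\le k\le p-1$ lie in $(p\pi)$ is exactly the computation that fact encodes.
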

	\begin{proof}
		It follows from the fact that $\zeta_p-1$
		is a root of the polynomial $\left((X+1)^p-1\right)/X$.
	\end{proof}
	\begin{proof}[Proof of Proposition $\ref{propD}$]
		We put $\xi:=z^{-1/p}-1$.
		Then, $1-z^{1/p}\zeta_p^i$ is equal to $(1-z^{1/p})(1-\frac{\zeta_p^i-1}{\xi})$.
		Note that $\xi$ is a unit of $\mathcal O_{K_{v, z}}$ because $z\in \mathcal O_{K_v}^\times \cap(1+\mathcal O_{K_v}^\times)$.
		By the definition of $w_{p,m} (z,\gamma)$, we have the following equalities:
		{\small \begin{equation*}
				\log_p(w_{p,m} (z,\gamma))=\sum_{i=0}^{p-1}i^{m-1}\log_p(1-z^{1/p}\zeta_p^i)
				=\sum_{i=0}^{p-1}i^{m-1}\log_p(1-z^{1/p})+\sum_{i=0}^{p-1}i^{m-1}\log_p\left(1-\frac{\zeta_p^i-1}{\xi}\right).
			\end{equation*}
		}By Lemma \ref{lem1.6}, $\sum_{i=0}^{p-1}i^{m-1}\log_p(1-z^{1/p})$ is congruent to $0$ mod
		$p$.
		Hence, we have
		\[
		\log_p(w_{p,m} (z,\gamma))\equiv \sum_{i=0}^{p-1}i^{m-1}\log_p\left(1-\frac{\zeta_p^i-1}{\xi}\right)\ \pmod{p}.
		\]
		Meanwhile, the following equalities hold by the definition of the $p$-adic logarithm:
		\begin{equation}\label{eq7}
			\log_p\left(1-\frac{\zeta_p^i-1}{\xi}\right)=-\sum_{l=1}^\infty\frac{1}{l\xi^l}(\zeta_p^i-1)^l=-\sum_{p\nmid l}\frac{1}{l\xi^l}(\zeta_p^i-1)^l-\sum_{l=1}^\infty\frac{1}{pl\xi^{pl}}(\zeta_p^i-1)^{pl}.
		\end{equation}
		Note that, if $l$ is greater than $1$ (resp.\ greater than $p-1$), then the $p$-adic valuation of $\frac{(\zeta_p^i-1)^{pl}}{pl}$ (resp.\ $(\zeta_p^i-1)^l$) is greater than $1$. Thus the right hand side of the equality (\ref{eq7}) is congruent to $-\sum_{ l=1}^{p-1}\frac{1}{l\xi^l}(\zeta_p^i-1)^l-\frac{1}{p\xi^{p}}(\zeta_p^i-1)^p$ mod $p$ because $\xi\in \mathcal O_{K_{v, z}}^\times$. Furthermore, by Lemma \ref{lem1.5}, we obtain the following congruence:
		\begin{equation*}
			\begin{split}
				\frac{1}{p}\sum_{i=0}^{p-1} i^{m-1}(\zeta_p^i-1)^p&=\sum_{i=0}^{p-1}i^{m-1}\sum_{j=1}^{p-1}\frac{1}{p}\binom{p}{j}(-1)^{p-j}\zeta_p^{ij}\\
				&\equiv \sum_{j=1}^{p-1}\frac{1}{p}\binom{p}{j}(-1)^{p-j}\frac{(-1)^{m-2}(m-1)!p}{j^{m-1} (\zeta_p-1)^{m-1}}\ \ \ \left(\mathrm{mod}\ \frac{p}{(\zeta_p-1)^{m-2}}\right).
			\end{split}
		\end{equation*}
		Since $\frac{1}{p}\binom{p}{j}$ is congruent to $\frac{(-1)^{j-1}}{j}$ mod $p$,
		we have
		\begin{equation*}
			\frac{1}{p}\sum_{i=0}^{p-1} i^{m-1}(\zeta_p^i-1)^p \equiv \frac{(-1)^{m-2}(m-1)!p}{ (\zeta_p-1)^{m-1}}\sum_{j=1}^{p-1}\frac{1}{j^{m}}\equiv 0\ \hspace{9mm}\left(\mathrm{mod}\ \frac{p}{(\zeta_p-1)^{m-2}}\right)
		\end{equation*}
		as $p$ is greater than $m+1$ by assumption.
		Hence, we obtain the following congruence:
		\begin{equation}
			\sum_{i=0}^{p-1}i^{m-1}\log_p\left(1-\frac{\zeta_p^i-1}{\xi}\right)\equiv-\sum_{i=0}^{p-1}i^{m-1}\sum_{l=1}^{p-1}\frac{1}{l\xi^l}(\zeta_p^i-1)^l\label{eq3} \ \ \left(\mathrm{mod}\ \frac{p}{(\zeta_p-1)^{m-2}}\right).
		\end{equation}
		Now, let us calculate the right hand side of the congruence (\ref{eq3}). Since $\sum_{i=1}^{p-1}i^{m-1}$ is divided by $p$, we have the following congruence:
		\begin{equation*}
			\begin{split}
				\sum_{i=0}^{p-1}i^{m-1}(\zeta_p^i-1)^l&=\sum_{i=0}^{p-1}i^{m-1}\sum_{j=0}^l\binom{l}{j}(-1)^{l-j}\zeta_p^{ij}\\
				&=(-1)^l\sum_{j=0}^l\binom{l}{j}(-1)^j\sum_{i=0}^{p-1}i^{m-1}\zeta_p^{ij}\\
				&=(-1)^l\sum_{j=1}^l\binom{l}{j}(-1)^j\sum_{i=0}^{p-1}i^{m-1}\zeta_p^{ij}+(-1)^l\sum_{i=0}^{p-1}i^{m-1}\\
				&\equiv(-1)^l\sum_{j=1}^l\binom{l}{j}(-1)^j\sum_{i=0}^{p-1}i^{m-1}\zeta_p^{ij}\ \ (\mathrm{mod}\ p).
			\end{split}
		\end{equation*}
		Therefore, according to Lemma \ref{lem1.5}, we have the following congruence for any positive integer $l$ less than $p$:
		\begin{equation}
			\begin{split}
				\sum_{i=0}^{p-1}i^{m-1}(\zeta_p^i-1)^l&\equiv(-1)^l\sum_{j=1}^l\binom{l}{j}\frac{(-1)^{j+m-2}(m-1)!p}{j^{m-1}(\zeta_p-1)^{m-1}}\ \ \ \left(\mathrm{mod}\ \frac{p}{(\zeta_p-1)^{m-2}}\right)\\
				&=(-1)^l\frac{(-1)^{m-1}(m-1)!p}{(\zeta_p-1)^{m-1}}\sum_{j=1}^l\binom{l}{j}\frac{(-1)^{j-1}}{j^{m-1}}\\
				&=(-1)^l\frac{(-1)^{m-1}(m-1)!p}{(\zeta_p-1)^{m-1}}\sum_{l\geq n_1\geq \dots\geq n_{m-1}\geq 1}\frac{1}{n_1\cdots n_{m-1}}.\label{eq4}
			\end{split}
		\end{equation}
		Here, the last equality is obtained by Dilcher's identity (cf.\ \cite[Corollary 3]{D} or Theorem \ref{generalized Dilcher}).
		Finally, by substituting the congruence (\ref{eq4}) into the congruence (\ref{eq3}), we obtain the following congruence:
		\begin{equation}
			\begin{split}
				\log_p(w_{p,m} (z,\gamma))&\equiv-\frac{(-1)^{m-1}(m-1)!p}{(\zeta_p-1)^{m-1}}\sum_{l=1}^{p-1}\frac{(-1)^l}{l\xi^l}\sum_{l\geq n_1\geq \dots\geq n_{m-1}\geq 1}\frac{1}{n_1\cdots n_{m-1}} \\
				&\equiv\frac{(-1)^{m}(m-1)!p}{(\zeta_p-1)^{m-1}}\text{\rm \pounds}^{\star}_{p, \{1\}^{m}}\left(\frac{-1}{\xi}\right)\\
				&\equiv\frac{(-1)^{m}(m-1)!p}{(\zeta_p-1)^{m-1}}\text{\rm \pounds}^{\star}_{p, \{1\}^{m}}\left(\frac{1}{1-z^{-1/p}}\right) \hspace{10mm} \left( \bmod{\frac{p}{(\zeta_p-1)^{m-2}}} \right).
			\end{split}
		\label{log-FMP}\end{equation}
        Here, $\text{\rm \pounds}^{\star}_{p, \{1\}^{m}}(t)$ is defined by 
        \[
        \text{\rm \pounds}^{\star}_{p, \{1\}^{m}}(t):=\sum_{p-1 \geq n_1 \geq \cdots \geq n_m \geq 1}\frac{t^{n_1}}{n_1\cdots n_m}
        \]
        (see \ref{sec:app}).  
		Since $p/(\zeta_p-1)^{m-1}$ is congruent to $-(\zeta_p-1)^{p-m}$ mod $(\zeta_p-1)^{p-m+1}$ by Lemma \ref{lem1.7} and 
		\[
		{\text{\rm \pounds}}_{p,\{1\}^m}^{\star}\left(\frac{1}{1-z^{-1/p}}\right)
		\equiv
		\frac{1}{z-1}\text{\rm \pounds}_{p,m}(z^{1/p}) 
		\pmod{p\mathcal O_{K_{v, z}}}
		\]
		holds as the case $\Bbbk = (\{1\}^m)$ of the congruence (\ref{new fn eq}) by the assumption $p > m+1$, we have the conclusion of Proposition \ref{propD}.
	\end{proof}
	\section{Applications of Theorem \ref{thmC}}
	\label{subsec:Appriations of Theorem C}
	In this section, we give applications of Theorem \ref{thmC}.
	We prepare some lemmas about the divisibility of elements in local fields.
	For any field $L$, we denote by $\mu(L)^{(p)}$ the set of roots of unity of $L$ whose orders are prime to $p$.
	\begin{lemma}Let $L$ be a finite extension of ${\mathbb Q}_p$ and $\pi$ a prime element of $\mathcal O_L$. Let $x=1+\sum_{n=n_0}^\infty a_n\pi^n$ be a principal unit of $L$ with $a_n\in\mu(L)^{(p)}\cup\{0\}$ and let $n_0$ be the minimal positive integer such that $a_{n_0}\neq 0$. If $n_0$ is not divided by $p$ and less than $v_L(p)$, then any $p$-th root of $x$ is not contained in $L$. Here, $v_L$ is the discrete valuation of $L$ normalized as $v_L(\pi)=1$.
		\label{lem2.1}
	\end{lemma}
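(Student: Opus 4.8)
The plan is to argue by contradiction: suppose that some $p$-th root $y$ of $x$ lies in $L$, so $y^p=x$. Since $v_L(x)=0$ we have $y\in\mathcal O_L^\times$, and reducing the relation modulo $\pi$ in the residue field (which has characteristic $p$ and in which the image of $x$ is $1$) forces $\bar y^{\,p}=1$, hence $\bar y=1$. Thus $y$ is itself a principal unit. Writing $u:=y-1$ and $m:=v_L(u)\ge 1$, the whole problem reduces to comparing the $\pi$-adic valuation of $x-1=y^p-1$, as predicted by the binomial expansion of $(1+u)^p$, with the value $n_0$ prescribed by the hypothesis.

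First I would record that, because $a_{n_0}\in\mu(L)^{(p)}$ is a nonzero root of unity and therefore a unit, the leading term of $x-1$ is $a_{n_0}\pi^{n_0}$, so that $v_L(x-1)=n_0$. Next I would expand
\[
y^p-1=\sum_{k=1}^{p}\binom{p}{k}u^k
\]
and compute the valuation of each summand. Setting $e:=v_L(p)$ and using that $\binom{p}{k}$ is divisible by $p$ exactly once for $1\le k\le p-1$, one gets $v_L\bigl(\binom{p}{k}u^k\bigr)=e+km$ for $1\le k\le p-1$ and $v_L(u^p)=pm$.

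The heart of the argument is then a dichotomy governed by the comparison of $(p-1)m$ with $e$. If $(p-1)m<e$, then $pm<e+km$ for every $1\le k\le p-1$ (since $(p-k)m\le(p-1)m<e$), so the term $u^p$ has strictly smallest valuation, there is no cancellation, and $v_L(x-1)=pm$; hence $n_0=pm$ is divisible by $p$, contradicting $p\nmid n_0$. If instead $(p-1)m\ge e$, then every summand has valuation at least $e+m$, so $n_0=v_L(x-1)\ge e+m>v_L(p)$, contradicting $n_0<v_L(p)$. In either regime we reach a contradiction, which proves that no $p$-th root of $x$ lies in $L$.

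The step I expect to be the main point requiring care is the boundary case $(p-1)m=e$, where the $k=1$ and $k=p$ terms share the valuation $e+m$ and could a priori cancel. The key observation that avoids any delicate leading-coefficient analysis is that even there \emph{every} summand has valuation $\ge e+m$, so the crude lower bound $n_0\ge e+m>e$ already contradicts $n_0<v_L(p)$; cancellation can only increase the valuation and so only strengthens the contradiction. Thus the only genuine obstacles are correctly separating the two valuation regimes and confirming $v_L(x-1)=n_0$, both of which are elementary once the expansion is in hand.
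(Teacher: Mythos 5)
Your proof is correct and follows essentially the same route as the paper's: write a putative $p$-th root as a principal unit $1+u$, expand $(1+u)^p$ binomially, and compare valuations to conclude that $v_L(x-1)=n_0$ must either be a multiple of $p$ or be at least $v_L(p)$. The paper simply absorbs the middle binomial terms into a single $pz$ with $z\in\mathcal O_L$ and reads off the same dichotomy, whereas you track each term's valuation and handle the potential boundary cancellation explicitly; this is a more careful writing of the identical argument, not a different one.
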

	\begin{proof}Assume that a $p$-th root of $x$, say $y$, is contained in $L$. Then, $y$ is also a principal unit of $L$. Write $y=1+b\pi^n$ with $n\in {\mathbb Z}_{>0}$ and $b\in \mathcal O_L^\times$. Then, $y^p=x$ is equal to
		$1+b^p\pi^{np} +pz$ for some $z\in \mathcal O_L$. Therefore, $n_0$ is divided by $p$ or greater than or equal to $v_L(p)$.
	\end{proof}
	Now, we return to the setting in Section \ref{sec:Proof of Theorem C}.
	Let $z=\zeta u\in \mathcal O_{K_v}^\times\cap (1+\mathcal O_{K_v}^\times)$
	with $u\in 1+p\mathcal O_{K_v}$ and $\zeta\in \mu(K_v)^{(p)}\setminus\{1\}$.
	Furthermore, we suppose that $u\in 1+p^2\mathcal O_{K_v} $.
	Then, we have $K_{v, z}=K_v(\mu_p)$ and $\pi:=\zeta_p-1$ is a prime element of $\mathcal O_{K_{v, z}}$.
	For each $x\in K_{v, z}$,
	we denote the $\pi$-adic expansion of $x$ by
	\[
	x=\sum_n a_n(x)\pi^n,\quad a_n(x)\in \mu(K_v)\cup \{0\}.
	\]
	Note that $a_n(x)$ is uniquely determined by $x$ and $\pi$.
	\begin{proposition}Let $x$ be a unit of $\mathcal O_{K_{v, z}}$.
		Assume that $v_p(\log_p (x))>1/(p-1)$ and $a_n(\log_p(x))\neq 0$ with some
		integer $n$ such that $1< n<p-1$. Then, any $p$-th root of $x$ is not contained in $K_{v, z}$.
		In particular, the cohomology class $\kappa_{p, K_{v, z}}(x)\otimes\zeta_p^{\otimes (m-1)}\in H^1(K_{v, z},\mathbb F_p(1)) \otimes_{\mathbb{F}_p} \mathbb{F}_p(m-1)$ is non-zero.
		\label{prop2.5}
	\end{proposition}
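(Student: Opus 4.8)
The plan is to argue directly from the homomorphism property of $\log_p$, using that the absolute ramification index of $L:=K_{v,z}$ equals $p-1$; Lemma \ref{lem2.1} then gives only an alternative route, as I explain at the end.

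First I would record two standing facts about $L$. Since $K_v/\mathbb{Q}_p$ is unramified and $L=K_v(\mu_p)=K_v(\zeta_p)$ is totally tamely ramified of degree $p-1$ over $K_v$ with uniformizer $\pi=\zeta_p-1$, the valuation $v_L$ normalized by $v_L(\pi)=1$ satisfies $v_L(p)=p-1$ and $v_L=(p-1)v_p$. Second, every unit $y\in\mathcal{O}_L^{\times}$ satisfies $v_L(\log_p y)\geq 1$: writing $y=\omega(1+t)$ with $\omega$ a root of unity of order prime to $p$ and $v_L(t)\geq 1$, one has $\log_p y=\sum_{n\geq 1}(-1)^{n-1}t^n/n$, and since $v_L(t^n/n)=n\,v_L(t)-(p-1)v_p(n)\geq n-(p-1)v_p(n)\geq 1$ for all $n\geq 1$, the ultrametric inequality gives $v_L(\log_p y)\geq 1$.

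I would then convert the hypotheses into a single valuation inequality. Because $a_n(\log_p x)\neq 0$ for some $n$ with $1<n<p-1$, the first nonzero $\pi$-adic digit of $\log_p x$ sits in a position $\leq p-2$, so $\log_p x\neq 0$ and $v_L(\log_p x)\leq p-2<p-1$. Now suppose for contradiction that $x=y^p$ for some $y\in L$; then $y\in\mathcal{O}_L^{\times}$, and the homomorphism property yields $\log_p x=p\,\log_p y$, whence
\[
v_L(\log_p x)=v_L(p)+v_L(\log_p y)\geq (p-1)+1=p,
\]
contradicting $v_L(\log_p x)<p-1$. Thus no $p$-th root of $x$ lies in $K_{v,z}$. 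For the final assertion, a non-$p$-th-power unit has nontrivial class in $K_{v,z}^{\times}/(K_{v,z}^{\times})^p$, so $\kappa_{p,K_{v,z}}(x)\neq 0$ by the Kummer isomorphism; as $\mathbb{F}_p(m-1)$ is one-dimensional over $\mathbb{F}_p$ with basis $\zeta_p^{\otimes(m-1)}$, tensoring preserves nonvanishing and the stated class is nonzero.

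The only technical point is the integrality estimate $v_L(\log_p y)\geq 1$ for units, i.e.\ controlling the ``Frobenius'' terms $t^{p^k}/p^k$ of the logarithmic series at the borderline ramification $e=p-1$; everything else is formal. I expect the hypothesis $v_p(\log_p x)>1/(p-1)$ and the restriction $n>1$ to be needed only for the alternative route through Lemma \ref{lem2.1}: there one compares the leading $\pi$-adic digit of $\log_p x$ with that of $x-1$ and must exclude the cancellation (as in $\log_p\zeta_p=0$) that occurs precisely when $v_L(x-1)=1$, after which Lemma \ref{lem2.1} applies with first index $n_0=v_L(x-1)<p-1=v_L(p)$.
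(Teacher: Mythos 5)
Your proof is correct, but it takes a genuinely different route from the paper's. The paper exponentiates: it sets $x':=\exp(\log_p(x))$ (this is precisely where the hypothesis $v_p(\log_p(x))>1/(p-1)$ is used, to make the series converge), observes that $x/x'$ is a root of unity, and reduces to showing that no $\zeta_p^r x'$ is a $p$-th power; that last step is Lemma \ref{lem2.1}, applied to the leading $\pi$-adic digit of the principal unit $\zeta_p^r x'$, and the restriction $n>1$ enters exactly to guarantee $a_1(x')=0$, so that multiplying by $\zeta_p^r=(1+\pi)^r$ creates a nonzero digit in position $1$ when $p\nmid r$. You stay entirely on the logarithm side: if $x=y^p$ then $v_L(\log_p x)=v_L(p)+v_L(\log_p y)\geq (p-1)+1=p$, while the digit hypothesis forces $v_L(\log_p x)\leq p-2$. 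Your two supporting estimates check out: $v_L(p)=p-1$ because in the setting of Section 5 one has $K_{v,z}=K_v(\zeta_p)$ with $K_v/\mathbb{Q}_p$ unramified, and $v_L(\log_p y)\geq 1$ for every unit $y$ because the borderline terms $t^{p^k}/p^k$ of the series have $v_L\geq p^k-k(p-1)\geq 1$ (possible cancellation among the terms of valuation exactly $1$, as for $\log_p\zeta_p=0$, only helps you). The homomorphism property of the Iwasawa logarithm absorbs the root-of-unity ambiguity that the paper must handle by hand. What your approach buys: it is shorter, it dispenses with $\exp$ and with Lemma \ref{lem2.1} entirely, and it shows that the hypotheses $v_p(\log_p(x))>1/(p-1)$ and $n>1$ are not needed for Proposition \ref{prop2.5} itself --- any nonzero digit $a_n(\log_p(x))$ with $n\leq p-1$ already rules out a $p$-th root in $K_{v,z}$. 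What the paper's route buys is a statement (Lemma \ref{lem2.1}) formulated for an arbitrary local field $L$ in terms of the digits of $x$ itself rather than of $\log_p(x)$, but since that lemma is used nowhere else, your argument could replace the paper's without loss.
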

	\begin{proof}Let $\exp(X)$ be the formal power series $\sum_{n=0}^\infty\frac{X^n}{n!}$. Note that, if the $p$-adic valuation of $y\in {\mathbb C}_p$ is greater than $1/(p-1)$, then the series $\exp(y)$ converges. Set $x':=\exp(\log_p(x))$.
		Then, $x/x'$ is a root of unity in $K_{v, z}$ because it is contained in the kernel of $\log_p$ (cf. \cite[Proposition 5.6]{Wa}).
		Therefore, it is sufficient to show that any $p$-th root of $\zeta_p^rx'$ is not contained in $K_{v, z}$
		for any integer $r$.
		
		By the assumption for $\log_p(x)$, $a_n(x')$ is non-zero
		for some integer $n$ satisfying $1<n<p-1$.
		On the other hand, since $a_1(x')=0$, we have $a_{1}(\zeta_p^rx')\neq 0$
		for any integer $r$ prime to $p$. Hence, by applying Lemma \ref{lem2.1} to the case where $L=K_{v, z}$, $\pi=\zeta_p-1$, and $x=\zeta_p^rx'$, we see that any $p$-th root of $\zeta_p^rx'$ is not contained in $K_{v, z}$ for any integer $r$. Thus,
		we have the conclusion.
	\end{proof}
	\begin{corollary}
		Let $c$ be an element of $H^1_f(K_{v, z},\mathbb F_p(1))\otimes_{\mathbb{F}_p}\mathbb{F}_p(m-1)$ and write $\lambda_{v, z}^{(m)}(c) \in \Lambda_{K_{v, z}}(m-1)$ as
		\[
		\lambda_{v, z}^{(m)}(c)=\left(\sum_{n=1}^\infty a_n(\zeta_p-1)^n\bmod{\bigl(p\log_p(\mathcal{O}_{K_{v, z}}^{\times})\bigr)}\right)\otimes \zeta_p^{\otimes(m-1)},\quad a_n\in \mu(K_v)\cup \{0\}.
		\]
		If $a_1=0$ and $a_n\neq 0$ for some $1<n<p-1$, then $c$ is non-zero.
		\label{cor2.6}
	\end{corollary}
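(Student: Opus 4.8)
The plan is to reduce the statement to a direct application of Proposition \ref{prop2.5}. First I would unwind the definition of $\lambda_{v,z}^{(m)}$: since $\mathbb{F}_p(m-1)$ is one-dimensional over $\mathbb{F}_p$ with basis $\zeta_p^{\otimes(m-1)}$, and $H^1_f(K_{v,z},\mathbb{F}_p(1))$ is by Definition \ref{dfn4.8} the image of $\mathcal{O}_{K_{v,z}}^\times/(\mathcal{O}_{K_{v,z}}^\times)^p$ under the Kummer map, I can write $c=\kappa_{p,K_{v,z}}(x)\otimes\zeta_p^{\otimes(m-1)}$ for a class $x\in\mathcal{O}_{K_{v,z}}^\times/(\mathcal{O}_{K_{v,z}}^\times)^p$ uniquely determined by $c$. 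Then $\lambda_{v,z}^{(m)}(c)=[\log_p(x)]\otimes\zeta_p^{\otimes(m-1)}$, and $c\neq 0$ is equivalent to $x$ not being a $p$-th power, i.e.\ to the nonexistence of a $p$-th root of $x$ in $K_{v,z}$. This is exactly the conclusion furnished by Proposition \ref{prop2.5}, so it suffices to verify the two hypotheses of that proposition for this $x$.

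The second step controls the ambiguity coming from the reduction modulo $p\log_p(\mathcal{O}_{K_{v,z}}^\times)$. Because $K_v$ is unramified over $\mathbb{Q}_p$ and $K_{v,z}=K_v(\mu_p)$, the extension $K_{v,z}/K_v$ is totally ramified of degree $p-1$, so $\pi=\zeta_p-1$ is a uniformizer with $v_p(\pi)=1/(p-1)$ and $v_p(p)=1$. A term-by-term estimate of $\log_p(1+y)=\sum_{n\geq 1}(-1)^{n-1}y^n/n$ for $v_p(y)\geq 1/(p-1)$ shows $\log_p(\mathcal{O}_{K_{v,z}}^\times)\subseteq\pi\mathcal{O}_{K_{v,z}}$, whence $p\log_p(\mathcal{O}_{K_{v,z}}^\times)\subseteq\pi^{p}\mathcal{O}_{K_{v,z}}$. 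Consequently every element of $p\log_p(\mathcal{O}_{K_{v,z}}^\times)$ has $\pi$-adic valuation at least $p$, so the coefficients $a_n$ with $1\leq n\leq p-1$ appearing in the expansion of $\lambda_{v,z}^{(m)}(c)$ are independent of the chosen representative and coincide with the coefficients $a_n(\log_p(x))$ of the genuine $\pi$-adic expansion of $\log_p(x)$.

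With this identification the two hypotheses of Proposition \ref{prop2.5} follow from the hypotheses of the corollary. Since $a_1=0$ and $\log_p(x)\in\pi\mathcal{O}_{K_{v,z}}$, the element $\log_p(x)$ has $\pi$-adic valuation at least $2$ (it is nonzero because some $a_n\neq 0$), so $v_p(\log_p(x))\geq 2/(p-1)>1/(p-1)$, which is the first hypothesis. The assumption $a_n\neq 0$ for some $1<n<p-1$ gives $a_n(\log_p(x))\neq 0$ in the same range, which is the second hypothesis. Proposition \ref{prop2.5} then yields that $\kappa_{p,K_{v,z}}(x)\otimes\zeta_p^{\otimes(m-1)}=c$ is non-zero, as desired.

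The only real subtlety, and the step I expect to be the main obstacle, is the valuation bookkeeping of the second paragraph: one must be sure that the indeterminacy $p\log_p(\mathcal{O}_{K_{v,z}}^\times)$ lives in sufficiently high $\pi$-valuation so that the low-order coefficients $a_n$ with $n\leq p-1$ faithfully record those of $\log_p(x)$. This hinges on the totally ramified degree computation $v_p(p)=1=(p-1)\,v_p(\pi)$, i.e.\ $v_p(\pi^{p-1})=1$, together with the bound $\log_p(\mathcal{O}_{K_{v,z}}^\times)\subseteq\pi\mathcal{O}_{K_{v,z}}$; everything else is a direct invocation of Proposition \ref{prop2.5}.
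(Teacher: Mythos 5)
Your proof is correct and takes the same route as the paper, whose entire proof is the single sentence that the corollary is a direct consequence of Proposition \ref{prop2.5}. Your second paragraph merely makes explicit the valuation bookkeeping the paper leaves implicit --- namely $\log_p(\mathcal{O}_{K_{v,z}}^\times)\subseteq(\zeta_p-1)\mathcal{O}_{K_{v,z}}$ and hence $p\log_p(\mathcal{O}_{K_{v,z}}^\times)\subseteq(\zeta_p-1)^{p}\mathcal{O}_{K_{v,z}}$, so the coefficients $a_n$ with $n\le p-1$ are independent of the representative --- and this is carried out correctly.
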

	\begin{proof}
		This is a direct consequence of Proposition \ref{prop2.5}.
	\end{proof}
	For the proof of Corollary \ref{corA'}, we need two more lemmas about Galois cohomology.
	\begin{lemma}Let $L$ be ${\mathbb Z}[1/p]$ or ${\mathbb Q}_p$ and let $m$ be a positive integer which is not divided by $p-1$. Then, the natural map $H_{\mathrm{cont}}^1(L, \mathbb{Z}_p(m))\rightarrow H_{\mathrm{cont}}^1(L,{\mathbb Q}_p(m))$ is injective. In particular, the $\mathbb Z_p$-module $H_{\mathrm{cont}}^1(L, \mathbb{Z}_p(m))$ is torsion free.
		\label{rem2}
	\end{lemma}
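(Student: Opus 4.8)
The plan is to control the kernel of the natural map through the long exact sequence attached to the coefficient sequence
\[
0\to \mathbb{Z}_p(m)\to \mathbb{Q}_p(m)\to \mathbb{Q}_p/\mathbb{Z}_p(m)\to 0.
\]
Since $\mathbb{Q}_p/\mathbb{Z}_p(m)$ is discrete, this sequence admits a continuous set-theoretic section and hence induces a long exact sequence in continuous cohomology, whose relevant segment is
\[
H^0_{\mathrm{cont}}(L,\mathbb{Q}_p/\mathbb{Z}_p(m))\xrightarrow{\ \delta\ } H^1_{\mathrm{cont}}(L,\mathbb{Z}_p(m))\to H^1_{\mathrm{cont}}(L,\mathbb{Q}_p(m)).
\]
Thus the kernel of the map in question equals the image of $\delta$, and it suffices to prove that $H^0_{\mathrm{cont}}(L,\mathbb{Q}_p/\mathbb{Z}_p(m))=0$.

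The computation of this $H^0$ is the heart of the argument. First I would observe that taking invariants commutes with the filtered colimit $\mathbb{Q}_p/\mathbb{Z}_p(m)=\varinjlim_n \mathbb{Z}/p^n\mathbb{Z}(m)$, so that $H^0_{\mathrm{cont}}(L,\mathbb{Q}_p/\mathbb{Z}_p(m))=\varinjlim_n H^0(L,\mathbb{Z}/p^n\mathbb{Z}(m))$, reducing everything to finite coefficients. The group $G_L$ acts on $\mathbb{Z}/p^n\mathbb{Z}(m)$ through $\chi^m$, where $\chi$ is the $p$-adic cyclotomic character. For both $L=\mathbb{Q}_p$ and $L=\mathbb{Z}[1/p]$ the character $\chi\colon G_L\to \mathbb{Z}_p^\times$ is surjective (in the global case because $\mathbb{Q}(\mu_{p^\infty})$ is unramified outside $p$, so $\chi$ factors through $\pi_1^{\text{\'et}}(\Spec\mathbb{Z}[1/p])$), whence its reduction modulo $p$ surjects onto $(\mathbb{Z}/p\mathbb{Z})^\times$, a cyclic group of order $p-1$. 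Because $p-1\nmid m$, the character $\chi^m\bmod p$ is nontrivial, so there exists $\sigma\in G_L$ with $\chi(\sigma)^m\not\equiv 1\pmod p$, i.e.\ $\chi(\sigma)^m-1\in\mathbb{Z}_p^\times$. For any $G_L$-fixed $x\in\mathbb{Z}/p^n\mathbb{Z}(m)$ one then has $(\chi(\sigma)^m-1)x=0$ with a unit coefficient, forcing $x=0$. Hence every $H^0(L,\mathbb{Z}/p^n\mathbb{Z}(m))$ vanishes, and so does the direct limit.

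Combining the two steps, $\delta$ has trivial source, so the natural map $H^1_{\mathrm{cont}}(L,\mathbb{Z}_p(m))\to H^1_{\mathrm{cont}}(L,\mathbb{Q}_p(m))$ is injective. The final assertion is then immediate: $H^1_{\mathrm{cont}}(L,\mathbb{Q}_p(m))$ is a $\mathbb{Q}_p$-vector space, hence torsion free as a $\mathbb{Z}_p$-module, and a submodule of a torsion-free module is torsion free.

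The only delicate point is the justification of the long exact sequence for the non-profinite coefficient module $\mathbb{Q}_p(m)$; I would settle it via the discreteness/continuous-section remark above. Alternatively, one can bypass it entirely by running the multiplication-by-$p^n$ sequence $0\to\mathbb{Z}_p(m)\xrightarrow{p^n}\mathbb{Z}_p(m)\to\mathbb{Z}/p^n\mathbb{Z}(m)\to 0$, which stays within profinite coefficients: the same vanishing $H^0(L,\mathbb{Z}/p^n\mathbb{Z}(m))=0$ shows that $H^1_{\mathrm{cont}}(L,\mathbb{Z}_p(m))$ has no $p^n$-torsion for every $n$, giving the torsion-freeness directly, with injectivity following once one knows $H^1_{\mathrm{cont}}(L,\mathbb{Q}_p(m))\cong H^1_{\mathrm{cont}}(L,\mathbb{Z}_p(m))\otimes_{\mathbb{Z}_p}\mathbb{Q}_p$ (a consequence of the finite generation of $H^1_{\mathrm{cont}}(L,\mathbb{Z}_p(m))$ for these $L$).
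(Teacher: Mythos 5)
Your proposal is correct and follows essentially the same route as the paper: both reduce injectivity to the vanishing of $H^0(L,\mathbb{Q}_p/\mathbb{Z}_p(m))$, which is deduced from the fact that the mod $p$ cyclotomic (Teichm\"uller) character has order $p-1$ on $\pi_1^{\text{\'et}}(\mathrm{Spec}(L))$ and $p-1\nmid m$. The only cosmetic difference is that the paper kills the $p$-torsion of $H^0(L,\mathbb{Q}_p/\mathbb{Z}_p(m))$ via the multiplication-by-$p$ sequence on $\mathbb{Q}_p/\mathbb{Z}_p(m)$, whereas you compute $H^0$ at each finite level $\mathbb{Z}/p^n\mathbb{Z}(m)$ with a unit-difference argument and pass to the colimit.
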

	\begin{proof}We note that the $\pi_1^{\mathrm{\acute{e}t}}(\mathrm{Spec}(L))$-invariant part of $\mu_p^{\otimes m}$ vanishes. Indeed, $\pi_1^{\mathrm{\acute{e}t}}(\mathrm{Spec}(L))$ acts on the $1$-dimensional $\mathbb F_p$-vector space $\mu_p^{\otimes m}$ via the $m$-th power $\omega^{m}$ of the Teichm\"uller character $\omega$ and the order of $\omega$ on $\pi_1^{\mathrm{\acute{e}t}}(\mathrm{Spec}(L))$ is $p-1$. By taking $\pi_1^{\mathrm{\acute{e}t}}(\mathrm{Spec}(L))$-invariant part of the exact sequence $0\rightarrow \mu_p^{\otimes m}\rightarrow {{\mathbb Q}_p}/{\mathbb{Z}_p}(m)\xrightarrow{p} {{\mathbb Q}_p}/{\mathbb{Z}_p}(m)\rightarrow 0$, we see that $H^0(L, {{\mathbb Q}_p}/{\mathbb{Z}_p}(m))$ has no non-trivial $p$-torsion element. Since any element of $H^0(L, {{\mathbb Q}_p}/{\mathbb{Z}_p}(m))$ is a $p$-power torsion element, we have $H^0(L, {{\mathbb Q}_p}/{\mathbb{Z}_p}(m))=0$. This implies that the natural homomorphism $H_{\mathrm{cont}}^1(L,\mathbb{Z}_p(m))\rightarrow H_{\mathrm{cont}}^1(L,\mathbb{Q}_p(m))$ is injective.
	\end{proof}
	\begin{lemma}Let $m$ be a positive odd integer and $p$ a prime number such that $m\not\equiv 0,1\pmod{p-1}$.
		Then, the restriction map $\mathrm{res}_m\colon H_{\mathrm{cont}}^1({\mathbb Z}[1/p],{\mathbb Z}_p(m))\to H_{\mathrm{cont}}^1({\mathbb Q}_p,{\mathbb Z}_p(m))$ is isomorphism if the composite
		\begin{equation}
			\label{eq2}
			H_{\mathrm{cont}}^1({\mathbb Z}[1/p],{\mathbb Z}_p(m))\rightarrow H_{\mathrm{cont}}^1({\mathbb Q}_p,{\mathbb Z}_p(m))\rightarrow H^1({\mathbb Q}_p,{\mathbb F}_p(m))
			\rightarrow H^1({\mathbb Q}_p(\zeta_p),{\mathbb F}_p(m))
		\end{equation}	
		is non-trivial.
		\label{rem3}
	\end{lemma}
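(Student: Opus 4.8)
The plan is to reduce the statement to a comparison of two free $\mathbb{Z}_p$-modules of rank one and to show that non-triviality of the composite forces the restriction map to carry a generator to a generator. First I would record that, by Lemma \ref{rem2}, both $H^1_{\mathrm{cont}}(\mathbb{Z}[1/p], \mathbb{Z}_p(m))$ and $H^1_{\mathrm{cont}}(\mathbb{Q}_p, \mathbb{Z}_p(m))$ are torsion-free $\mathbb{Z}_p$-modules; the hypothesis $m \not\equiv 0 \pmod{p-1}$ is exactly what Lemma \ref{rem2} needs. Next I would pin down their ranks. For the local module, the local Euler characteristic formula together with local Tate duality gives $H^0(\mathbb{Q}_p, \mathbb{Q}_p(m)) = H^2(\mathbb{Q}_p, \mathbb{Q}_p(m)) = 0$ for $m \neq 0, 1$, whence $\dim_{\mathbb{Q}_p} H^1_{\mathrm{cont}}(\mathbb{Q}_p, \mathbb{Q}_p(m)) = 1$. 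For the global module, since $m$ is odd, complex conjugation acts by $-1$ on $\mathbb{Q}_p(m)$ and the global Euler characteristic equals $-1$; together with $H^0 = 0$ (as $m > 0$) and the vanishing of $H^2(\mathbb{Z}[1/p], \mathbb{Q}_p(m))$, this yields $\dim_{\mathbb{Q}_p} H^1_{\mathrm{cont}}(\mathbb{Z}[1/p], \mathbb{Q}_p(m)) = 1$. With torsion-freeness, both modules are free of rank one, so $\mathrm{res}_m$ is a $\mathbb{Z}_p$-linear map $\mathbb{Z}_p \to \mathbb{Z}_p$, an isomorphism precisely when it is nonzero modulo $p$.

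Then I would analyze the two remaining arrows of the composite (\ref{eq2}). The reduction map $\rho\colon H^1_{\mathrm{cont}}(\mathbb{Q}_p, \mathbb{Z}_p(m)) \to H^1(\mathbb{Q}_p, \mathbb{F}_p(m))$ is surjective with kernel $p\,H^1_{\mathrm{cont}}(\mathbb{Q}_p, \mathbb{Z}_p(m))$: under $m \not\equiv 0, 1 \pmod{p-1}$ one computes $H^0(\mathbb{Q}_p, \mathbb{F}_p(m)) = H^2(\mathbb{Q}_p, \mathbb{F}_p(m)) = 0$, so $\dim_{\mathbb{F}_p} H^1(\mathbb{Q}_p, \mathbb{F}_p(m)) = 1$, while the same kind of computation with $\mathbb{Z}/p^n$-coefficients and local duality gives $H^2(\mathbb{Q}_p, \mathbb{Z}_p(m)) = 0$; the long exact sequence attached to $0 \to \mathbb{Z}_p(m) \xrightarrow{p} \mathbb{Z}_p(m) \to \mathbb{F}_p(m) \to 0$ then makes $\rho$ surjective and identifies $H^1(\mathbb{Q}_p, \mathbb{F}_p(m))$ with $H^1_{\mathrm{cont}}(\mathbb{Q}_p, \mathbb{Z}_p(m))/p$. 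The last arrow $H^1(\mathbb{Q}_p, \mathbb{F}_p(m)) \to H^1(\mathbb{Q}_p(\zeta_p), \mathbb{F}_p(m))$ is injective: by inflation-restriction its kernel is $H^1(\mathrm{Gal}(\mathbb{Q}_p(\zeta_p)/\mathbb{Q}_p), \mathbb{F}_p(m))$, which vanishes because $\mathrm{Gal}(\mathbb{Q}_p(\zeta_p)/\mathbb{Q}_p)$ has order $p-1$ prime to $p$.

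Combining these, the composite (\ref{eq2}) sends a class $c$ to $\mathrm{res}_m(c)$ read modulo $p$ and then injected; it is therefore non-trivial if and only if the image of $\mathrm{res}_m$ is not contained in $p\,H^1_{\mathrm{cont}}(\mathbb{Q}_p, \mathbb{Z}_p(m))$. Since the target is free of rank one over $\mathbb{Z}_p$, any element outside $p$ times the module is a generator, so this non-triviality forces $\mathrm{res}_m$ to be surjective; a surjective $\mathbb{Z}_p$-linear map between free $\mathbb{Z}_p$-modules of rank one is an isomorphism, which is the assertion of the lemma.

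I expect the only input beyond routine local Galois cohomology to be the exact determination of the global rank, $\dim_{\mathbb{Q}_p} H^1_{\mathrm{cont}}(\mathbb{Z}[1/p], \mathbb{Q}_p(m)) = 1$; its upper bound relies on the vanishing of the global $H^2(\mathbb{Z}[1/p], \mathbb{Q}_p(m))$, which I would invoke as a known weak Leopoldt type result rather than reprove, while the lower bound is guaranteed by the non-triviality hypothesis. Everything else---torsion-freeness from Lemma \ref{rem2}, the local dimension counts, and the two kernel and cokernel computations in the factorization of (\ref{eq2})---follows directly from the Euler characteristic formulas, local duality, and inflation-restriction.
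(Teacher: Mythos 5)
Your proposal is correct and follows essentially the same route as the paper: both arguments use Lemma \ref{rem2} plus the local and global Euler--Poincar\'e characteristic computations to identify the source and target as free $\mathbb{Z}_p$-modules of rank one, and then observe that the composite (\ref{eq2}) factors through the reduction of $\mathrm{res}_m$ modulo $p$, so its non-triviality forces $\mathrm{res}_m$ to be surjective and hence an isomorphism. The extra injectivity checks you supply for the last two arrows of (\ref{eq2}) are fine but not needed, since only the factorization through $H^1_{\mathrm{cont}}(\mathbb{Q}_p,\mathbb{Z}_p(m))/p$ is used.
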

	\begin{proof}
		By the local and global Euler--Poincar\'e characteristics of Galois cohomologies,
		the ranks of $H^1_{\mathrm{cont}}(\mathbb{Z}[1/p],{\mathbb Q}_p(m))$ and $H^1_{\mathrm{cont}}({\mathbb Q}_p,{\mathbb Q}_p(m))$
		are equal to $1$. Hence, by Lemma \ref{rem2}, $H_{\mathrm{cont}}^1({\mathbb Z}[1/p],{\mathbb Z}_p(m))$ and $H_{\mathrm{cont}}^1({\mathbb Q}_p,{\mathbb Z}_p(m))$
		are free $\mathbb Z_p$-modules of rank one.
		Therefore $\mathrm{res}_m$ is an isomorphism if and only if\[
		\overline{\mathrm{res}}_m\colon H_{\mathrm{cont}}^1({\mathbb Z}[1/p],{\mathbb Z}_p(m))\rightarrow H_{\mathrm{cont}}^1({\mathbb Q}_p,{\mathbb Z}_p(m))/pH_{\mathrm{cont}}^1({\mathbb Q}_p,{\mathbb Z}_p(m))
		\]
		is non-trivial.
		Therefore, if (\ref{eq2}) is non-trivial, then $\overline{\mathrm{res}}_m$ is also non-trivial
		because the homomorphism (\ref{eq2}) factors through $\overline{\mathrm{res}}_m$.
		This completes the proof of the lemma.
	\end{proof}
	Now, let $K={\mathbb Q}$, so $F={\mathbb Q}_p$.
	\begin{proof}[Proof of Corollary $\ref{corA'}$]
		Let us choose a specific path $\gamma$ from $\overrightarrow{01}$ to $-1$ so that $\mathrm{li}_{p,m}(-1, \gamma)$ is a continuous $1$-cocycle of $\pi_1^{\mathrm{\acute{e}t}}(\mathrm{Spec}(\mathbb Z[1/p]))$ valued in
		$\mathbb Z_p(m)$ (cf.\ Remark \ref{rem1}, \cite[Section 2, Corollary]{NW}). Since $\text{\rm \pounds}_{p,m}^{\mathrm{\acute{e}t}}(-1, \gamma)$
		is defined as the mod $p$ of $\mathrm{li}_{p,m}(-1, \gamma)$,
		the restriction of $\text{\rm \pounds}_{p,m}^{\mathrm{\acute{e}t}}(-1, \gamma)$ to $G_{{\mathbb Q}_p(\zeta_p)}$
		is contained in the image under the composite
		\[
		H^1_{\mathrm{cont}}(\mathbb{Z}[1/p],\mathbb{Z}_p(m))\rightarrow H^1_{\mathrm{cont}}({\mathbb Q}_p,\mathbb{Z}_p(m))\rightarrow H^1({\mathbb Q}_p(\zeta_p),\mathbb{F}_p(m)).
		\]
		Thus, it is sufficient to show the non-triviality of $\text{\rm \pounds}_{p,m}^{\mathrm{\acute{e}t}}(-1, \gamma)$
		in $H^1({\mathbb Q}_p(\zeta_p),\mathbb{F}_p(m))$ by Lemma \ref{rem3}.
		Note that $\text{\rm \pounds}_{p,m}^{\mathrm{\acute{e}t}}(-1, \gamma)$ is contained in $H^1_f(\mathbb Q_p(\zeta_p),\mathbb F_p(1))\otimes_{\mathbb F_p}\mathbb F_p(m-1)$.
		Then, we write $\lambda_{p, -1}^{(m)}(\text{\rm \pounds}_{p,m}^{\mathrm{\acute{e}t}}(-1, \gamma))$ as
		\begin{equation}
			\label{eq6}
			\lambda_{p, -1}^{(m)}(\text{\rm \pounds}_{p,m}^{\mathrm{\acute{e}t}}(-1, \gamma))=\left(\sum_{n=1}^\infty a_n(\zeta_p-1)^n\bmod{\bigl( p\log_p(\mathbb{Z}_p[\zeta_p]^{\times})\bigr)}\right)\otimes \zeta_p^{\otimes(m-1)}, 
		\end{equation}
		where $a_n\in \mu(\mathbb Q_p)\cup \{0\}$. According to Corollary \ref{cor2.6},
		it is sufficient to show that $a_1=0$ and $a_n\neq 0$ for some $1<n<p$.
		By Theorem \ref{thmC},
		$a_{p-m}$ in (\ref{eq6}) is congruent to
		$
		(-1)^m2^{-1}\text{\rm \pounds}_{p,m}\left(-1\right)
		$
		mod $p$.
		Here, we note that $\text{\rm \pounds}_{p,m}\left(-\zeta_p^n\right)$ is congruent to $\text{\rm \pounds}_{p,m}\left(-1\right)$
		mod $\zeta_p-1$ for any integer $n$.
		Since $1<p-m<p$, it is sufficient to show that $\text{\rm \pounds}_{p,m}\left(-1\right)\mod p$ is non-zero.
		By \cite[Lemma 4.1 (58)]{SS},
		the value $\text{\rm \pounds}_{p,m}(-1)$ is
		congruent to
		$
		\frac{1-2^{m-1}}{2^{m-2}m}B_{p-m}$
		mod $p$.
		Therefore if $p$ is regular,
		then $\text{\rm \pounds}_{p,m}(-1)\not\equiv 0\ (\mathrm{mod}\ p)$.
		This completes the proof of the corollary.
	\end{proof}
	By using Proposition \ref{propNW} and the equality (\ref{log-FMP}), we can deduce
	vanishing results for the finite star-multiple polylogarithms
	from facts about Galois cohomology.
	For example, we give another proof of a part of \cite[Proposition 4.2 (64)]{SS} which is a generalization of {\cite[Theorem 1.1 (1.1)]{ZWS}} {\em without using the functional equation}:
	\begin{corollary}
		If $m$ is an even positive integer, then we have
		\[
		\text{\rm \pounds}_{p,\{1\}^m}^{\star}(1/2):=\sum_{p-1 \geq n_1 \geq \cdots \geq n_m \geq 1}\frac{1}{2^{n_1}n_1\cdots n_m} \equiv 0 \pmod{p}
		\]
		for any prime number $p$ greater than $m+1$.
		\label{corB}
	\end{corollary}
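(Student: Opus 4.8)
The plan is to specialize the machinery of Sections \ref{sec:Proof of Theorem C} and \ref{subsec:Appriations of Theorem C} to $K=\mathbb{Q}$, $z=-1$ and $v=p$, and to deduce the congruence from the vanishing of a global Galois cohomology group, using the intermediate identity (\ref{log-FMP}) directly in place of the functional equation that turned it into Theorem \ref{thmC}. First I would record the elementary inputs of this specialization: $K_{v,z}=\mathbb{Q}_p(\zeta_p)$, and $p\nmid z(1-z)=-2$ for odd $p$. The reason the value $1/2$ appears is that $z^{-1/p}=-\zeta_p^{-j}$ for the $j$ fixed by $\gamma$, so $\frac{1}{1-z^{-1/p}}=\frac{1}{1+\zeta_p^{-j}}\equiv\frac12\pmod{\zeta_p-1}$. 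Hence in (\ref{log-FMP}) the star-value $\text{\rm \pounds}^{\star}_{p,\{1\}^m}\!\left(\frac{1}{1-z^{-1/p}}\right)$ may be replaced by $\text{\rm \pounds}^{\star}_{p,\{1\}^m}(1/2)$ modulo $\zeta_p-1$; since the prefactor $p/(\zeta_p-1)^{m-1}$ has valuation $p-m$ (normalizing $v(\zeta_p-1)=1$) while (\ref{log-FMP}) is an equality modulo valuation $p-m+1$, this replacement changes nothing.

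Next I would produce the relevant cohomology class. Exactly as in the proof of Corollary \ref{corA'}, I choose $\gamma$ from $\overrightarrow{01}$ to $-1$ so that $\mathrm{li}_{p,m}(-1,\gamma)$ is a continuous $1$-cocycle of $\pi_1^{\mathrm{\acute{e}t}}(\mathrm{Spec}(\mathbb{Z}[1/p]))$ with values in $\mathbb{Z}_p(m)$, defining a class in $H^1_{\mathrm{cont}}(\mathbb{Z}[1/p],\mathbb{Z}_p(m))$. The decisive input is a statement analogous to the rank computation used for odd $m$ in Lemma \ref{rem3}: for even $m$ with $2\le m<p-1$ the same Euler--Poincar\'e argument gives $H^1_{\mathrm{cont}}(\mathbb{Z}[1/p],\mathbb{Q}_p(m))=0$. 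Because $0<m<p-1$, Lemma \ref{rem2} shows $H^1_{\mathrm{cont}}(\mathbb{Z}[1/p],\mathbb{Z}_p(m))$ is torsion free, so the class $\mathrm{li}_{p,m}(-1,\gamma)$ already vanishes integrally. Reducing modulo $p$ and restricting to $G_{\mathbb{Q}_p(\zeta_p)}$, I obtain $c:=\text{\rm \pounds}^{\mathrm{\acute{e}t}}_{p,m}(-1,\gamma)\big|_{G_{\mathbb{Q}_p(\zeta_p)}}=0$.

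It remains to turn $c=0$ into the congruence. By Proposition \ref{propNW}, $c=\frac{1}{(m-1)!}\kappa_{p,\mathbb{Q}_p(\zeta_p)}(w)\otimes\zeta_p^{\otimes(m-1)}$ with $w:=w_{p,m}(-1,\gamma)$, which is a unit since each factor $1+\zeta_p^{i+j}$ (and the exceptional value $2$) is a unit for odd $p$; as $(m-1)!$ is a $p$-adic unit and $\mathbb{F}_p(m-1)$ is one-dimensional, $c=0$ forces $\kappa_{p,\mathbb{Q}_p(\zeta_p)}(w)=0$, i.e.\ $w\in(\mathbb{Q}_p(\zeta_p)^\times)^p$. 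On the other hand, combining (\ref{log-FMP}), the substitution above, and Lemma \ref{lem1.7} yields
\[
\log_p(w)\equiv(-1)^{m+1}(m-1)!\,(\zeta_p-1)^{p-m}\,\text{\rm \pounds}^{\star}_{p,\{1\}^m}(1/2)\pmod{(\zeta_p-1)^{p-m+1}},
\]
and, crucially, $\log_p(w)$ has no term of $(\zeta_p-1)$-adic valuation below $p-m$; in particular $v_p(\log_p(w))\ge (p-m)/(p-1)>1/(p-1)$. Thus $w$ being a $p$-th power is incompatible with the hypotheses of Proposition \ref{prop2.5} unless $a_n(\log_p(w))=0$ for every $n$ with $1<n<p-1$. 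Taking $n=p-m$, which lies strictly between $1$ and $p-1$ because $1<m<p-1$, and reading off the displayed leading coefficient, I conclude $\text{\rm \pounds}^{\star}_{p,\{1\}^m}(1/2)\equiv 0\pmod p$.

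The main obstacle is the cohomological half of the argument together with its interface to (\ref{log-FMP}). One must justify that the Nakamura--Wojtkowiak path at $-1$ really yields a global cocycle on $\mathrm{Spec}(\mathbb{Z}[1/p])$ and invoke the vanishing of $H^1_{\mathrm{cont}}(\mathbb{Z}[1/p],\mathbb{Q}_p(m))$ for even $m$; this is exactly where evenness enters, in contrast to the regularity hypothesis feeding Corollary \ref{corA'} for odd $m$. The second delicate point is that $c=0$ is detected by precisely the coefficient $a_{p-m}$: this rests on the rigidity of $\log_p(\mathcal{O}^\times_{\mathbb{Q}_p(\zeta_p)})$ encoded in Proposition \ref{prop2.5}, which is what lets a single Teichm\"uller digit of $\log_p(w)$ carry the arithmetic content. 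By comparison, the substitution $\frac{1}{1-z^{-1/p}}\mapsto\frac12$ and the valuation bookkeeping are routine once (\ref{log-FMP}) and Lemma \ref{lem1.7} are in hand.
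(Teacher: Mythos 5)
Your argument is, in structure, exactly the paper's proof: specialize to $K=\mathbb{Q}$, $z=-1$, observe that the global class $\mathrm{li}_{p,m}(-1,\gamma)$ lives in $H^1_{\mathrm{cont}}(\mathbb{Z}[1/p],\mathbb{Z}_p(m))$, kill it by a vanishing theorem for even $m$, and then read off $\text{\rm \pounds}^{\star}_{p,\{1\}^m}(1/2)\equiv 0$ from the identity (\ref{log-FMP}) together with the rigidity statement of Proposition \ref{prop2.5} / Corollary \ref{cor2.6}. Your bookkeeping for the substitution $\tfrac{1}{1-z^{-1/p}}\mapsto \tfrac12$ and for the valuations is correct, and the reduction from $c=0$ to $a_{p-m}(\log_p(w))=0$ is the same mechanism the paper uses.

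There is, however, one genuine gap: the claim that ``the same Euler--Poincar\'e argument gives $H^1_{\mathrm{cont}}(\mathbb{Z}[1/p],\mathbb{Q}_p(m))=0$'' for even $m$. Tate's global Euler characteristic formula for $V=\mathbb{Q}_p(m)$ over $\mathbb{Z}[1/p]$ yields
\[
\dim H^1 \;=\; \dim H^0+\dim H^2+\dim V^{c=-1},
\]
and for even $m\ge 2$ the right-hand side is $\dim H^2$ (complex conjugation acts trivially, so $V^{c=-1}=0$, and $H^0=0$). Thus Euler--Poincar\'e alone only gives $\dim H^1=\dim H^2$; to conclude the vanishing you must additionally know that $H^2_{\mathrm{cont}}(\mathbb{Z}[1/p],\mathbb{Q}_p(m))$ (equivalently $H^1$) vanishes, which is a nontrivial theorem of Soul\'e --- precisely the input the paper takes from \cite[Theorem 1]{So}. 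Your proof is repaired simply by citing that result in place of the Euler--Poincar\'e computation; note also that the same issue is why evenness of $m$ cannot be traded for a purely formal rank count, in contrast to the odd case where the formula does produce the rank directly.
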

	\begin{proof}
		Let $p$ be a prime number greater than $m+1$. As $m$ is even,
		the global Galois cohomology $H_{\mathrm{cont}}^1({\mathbb Z}[1/p],{\mathbb Z}_p(m))$ vanishes (cf.\ \cite[Theorem 1]{So}).
		Hence, $\text{\rm \pounds}_{p,m}^{\mathrm{\acute{e}t}}(-1, \gamma)$ vanishes
		because it is contained in the image under the mod $p$
		restriction map (Proof of Corollary \ref{corA'}).
		Thus, by the equality (\ref{log-FMP}) and Corollary \ref{cor2.6}, $\text{\rm \pounds}_{p,\{1\}^m}^{\star}(1/2)$ vanishes. 
	\end{proof}
	We can also obtain an another proof of a part of \cite[Theorem 3.22 (51)]{SS} by the distribution properties about the cyclotomic Soul\'e elements (\cite[Corollary 14.3.4]{W2}).
	\section{Ad\'ele-like reformulation}
	\label{sec:reformulation}
	In this section, we define $\mathcal{A}$-finite polylogarithms and $\mathcal{A}$-\'etale polylogarithms as elements of ad\'ele-like objects.
	
	For each commutative ring $R$, we define a commutative algebra $\mathcal A_R$ by
	\[
	\mathcal A_R:=\left. \left(\prod_{p}R/pR \right) \right/ \left(\bigoplus_{p}R/pR\right),
	\]
	where $p$ runs over all rational prime numbers. Then, we define {\em the $\mathcal{A}$-finite polylogarithm} $\text{\rm \pounds}_{\mathcal{A}, m}(t)$ by 
	\[
	\text{\rm \pounds}_{\mathcal{A}, m}(t) := (\text{\rm \pounds}_{p, m}(t)\bmod{p})_p \in \mathcal{A}_{\mathbb{Z}[t]}
	\]
	for a positive integer $m$.
	
	If $m \equiv m' \pmod{p-1}$, then we have $\text{\rm \pounds}_{p, m}(t) \equiv \text{\rm \pounds}_{p, m'}(t) \pmod{p}$.
	In other words, the ``weight'' of $\text{\rm \pounds}_{p, m}(t)$
	is determined up to mod $p-1$.
	However, we can define a weight of an $\mathcal{A}$-finite polylogarithm as {\em an integer}.　 This is one of advantages to considering such an ad\'ele-like framework.
	
	If $R=\mathbb Z$, we omit $\mathbb Z$ from our notation, that is,
	\[
	\mathcal A:=\mathcal{A}_{\mathbb{Z}}=\left. \left(\prod_{p}\mathbb F_p\right)\right/\left(\bigoplus_{p}\mathbb F_p\right)=\left(\prod_{p}\mathbb F_p\right)\otimes_{\mathbb Z}\mathbb Q.
	\]
	Thus, $\mathcal A$ can be regarded as a quotient of
	the ring $\mathbb A_{\mathbb Q,f}$ of finite ad\'eles of $\mathbb Q$ and we equip $\mathcal{A}$ with the quotient topology. One can check that the topology coincides with the indiscrete topology. Let us introduce Tate twists of the ring $\mathcal{A}$.
	\begin{definition}
		Let $m$ be an integer. Then, we define the topological Galois module
		$\mathcal A(m)$ by
		\[
		\mathcal A(m):=\mathbb A_{{\mathbb Q},f}(m)\otimes_{\mathbb A_{{\mathbb Q},f}}\mathcal A.
		\]
	\end{definition}
	$\mathcal{A}$-\'etale polylogarithms will be defined as
	continuous functions on the absolute Galois group of a number field
	valued in $\mathcal A(m)$.
	We equip $\prod_p\mathbb F_p(m)$ with the product topology.
	Then, the topology on $\mathcal A(m)$ coincides with the quotient topology induced by
	the natural surjection
	$
	\prod_p\mathbb F_p(m)\twoheadrightarrow \mathcal A(m).
	$
	\subsection{Cohomology of $\mathcal A(m)$}
	\label{subsec:Cohomology of A(m)}
	Let $K$ be a field of characteristic $0$.
	In this subsection, we study the continuous Galois cohomology of the $G_K$-module $\mathcal A(m)$.
	\begin{proposition}
		For each integer $m$ and each non-negative integer $i$,
		we have a natural isomorphism
		\[
		\left(\prod_pH^i(K,\mathbb F_p(m))\right)\otimes_{{\mathbb Z}}{\mathbb Q}\cong \frac{\prod_pH^i(K, \mathbb F_p(m))}{\bigoplus_p H^i\left(K,\mathbb F_p(m)\right)}.
		\]
		\label{prop4.4}
	\end{proposition}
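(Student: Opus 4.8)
The plan is to work abstractly with the $\mathbb F_p$-vector spaces $M_p := H^i(K,\mathbb F_p(m))$ and to exploit the single structural fact that each $M_p$ is annihilated by $p$. Writing $Q$ for the right-hand side of the asserted isomorphism, I would start from the tautological short exact sequence of abelian groups
\[
0\to \bigoplus_p M_p\to \prod_p M_p\to Q\to 0
\]
and reduce the claim to elementary divisibility properties, so that no input from Galois cohomology beyond the $\mathbb F_p$-linearity of $M_p$ is needed.

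First I would show that $Q$ is a $\mathbb Q$-vector space, i.e.\ that multiplication by each positive integer $n$ is bijective on $Q$. The crucial observation is that for a prime $p$ not dividing $n$, multiplication by $n$ acts on the $\mathbb F_p$-vector space $M_p$ as a nonzero scalar, hence invertibly. For surjectivity, given $(x_p)_p\in\prod_p M_p$ I would set $y_p:=n^{-1}x_p$ for $p\nmid n$ and $y_p:=0$ for the finitely many $p\mid n$; then $n(y_p)_p-(x_p)_p$ is supported on the finite set of primes dividing $n$, hence lies in $\bigoplus_p M_p$ and vanishes in $Q$. For injectivity, if $n(x_p)_p\in\bigoplus_p M_p$ then $nx_p=0$ for all but finitely many $p$, and for $p\nmid n$ this forces $x_p=0$; thus $(x_p)_p$ is supported on a finite set and so is zero in $Q$.

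Next I would tensor the displayed short exact sequence with $\mathbb Q$. Since $\mathbb Q$ is flat over $\mathbb Z$, the sequence stays exact. Each $M_p$ is torsion, so $\bigl(\bigoplus_p M_p\bigr)\otimes_{\mathbb Z}\mathbb Q=\bigoplus_p(M_p\otimes_{\mathbb Z}\mathbb Q)=0$, and because $Q$ is already a $\mathbb Q$-vector space one has $Q\otimes_{\mathbb Z}\mathbb Q\cong Q$. Combining these identifications yields the natural isomorphism $\bigl(\prod_p M_p\bigr)\otimes_{\mathbb Z}\mathbb Q\xrightarrow{\sim}Q$, realized as the $\mathbb Q$-linear extension of the quotient projection $\prod_p M_p\to Q$.

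I do not expect any serious obstacle: the statement is purely about torsion abelian groups, and the only point requiring a little care is the verification that $Q$ is uniquely divisible, which is exactly where the coprimality of $n$ and $p$ is used. Everything else is the formal flatness of $\mathbb Q$ together with the vanishing of torsion after inverting the integers.
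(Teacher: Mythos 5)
Your proof is correct. The paper takes a formally different but mechanically parallel route: it reduces the statement to the claim that the maximal torsion subgroup of $\prod_p H^i(K,\mathbb F_p(m))$ is exactly $\bigoplus_p H^i(K,\mathbb F_p(m))$, and verifies this by the same observation you rely on, namely that a positive integer $N$ acts invertibly on the $\mathbb F_p$-vector space $H^i(K,\mathbb F_p(m))$ once $p>N$; your injectivity step is essentially that computation. What your write-up adds, and what the paper leaves implicit, is the divisibility of the quotient $Q$ (your surjectivity step): identifying the torsion subgroup only shows that $Q$ is torsion-free, which by itself does not identify $Q$ with $\bigl(\prod_p M_p\bigr)\otimes_{\mathbb Z}\mathbb Q$ (compare $\mathbb Z$ versus $\mathbb Z\otimes_{\mathbb Z}\mathbb Q=\mathbb Q$), so the unique divisibility of $Q$ is genuinely needed and you supply it explicitly. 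Your packaging via the tautological short exact sequence and flatness of $\mathbb Q$ is clean, makes the naturality of the isomorphism transparent, and in this respect is slightly more complete than the argument in the paper. No gaps.
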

	\begin{proof}
		It is sufficient to show that the maximal torsion subgroup
		of $\prod_pH^i(K, \mathbb F_p(m))$ coincides with $\bigoplus_p H^i\left(K,\mathbb F_p(m)\right)$.
		We denote by $A_{\mathrm{tor}}$ the maximal torsion subgroup of an abelian group $A$.
		The inclusion relation
		\[
		\bigoplus_p H^i\left(K,\mathbb F_p(m)\right)\subset\left(\prod_pH^i(K, \mathbb F_p(m))\right)_{\mathrm{tor}}
		\]
		is clear. Let us take an element $(x_p)_p\in\left(\prod_pH^i(K, \mathbb F_p(m))\right)_{\mathrm{tor}}$
		and a positive integer $N$ annihilating $(x_p)_p$.
		If $p$ is greater than $N$, the positive integer $N$ is non-zero in $\mathbb F_p$.
		Hence, the annihilation $N(x_p)_p=(Nx_p)_p=0$ implies that $x_p=0$ for all $p>N$.
		Thus, we have the converse inclusion relation.
	\end{proof}
	For a subset $\Sigma$ of the set of all prime numbers, we define the closed subgroup $C_{\Sigma}$ of $\prod_p \mathbb{F}_p$ by $C_{\Sigma} := \prod_{p \in \Sigma}\mathbb{F}_p \times \prod_{p \not \in \Sigma} \{0\}$. We call $\Sigma$ {\em a cofinite subset} of the set of all prime numbers when the complement $\Sigma^{\mathrm{c}}$ of $\Sigma$ with respect to the set of all prime numbers is finite.
	\begin{lemma}
		Every open subgroup of $\prod_p\mathbb{F}_p$ is given by the form $C_{\Sigma}$ for some cofinite subset  $\Sigma$ of the set of all prime numbers.	
		\label{open}\end{lemma}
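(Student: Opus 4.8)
The plan is to use that each factor $\mathbb{F}_p$ carries the discrete topology, which reduces the classification of open subgroups to an elementary computation. First I would recall the general fact that a subgroup $H$ of a topological group is open precisely when it contains an open neighborhood $U$ of the identity: if such a $U$ exists then $H=\bigcup_{h\in H}(h+U)$ is a union of open translates, and the converse is immediate. So it suffices to understand which subgroups of $\prod_p\mathbb{F}_p$ contain a basic open neighborhood of $0$.

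Next I would make these neighborhoods explicit. By the definition of the product topology together with the discreteness of each $\mathbb{F}_p$, every open neighborhood of $0$ contains one of the shape $\prod_{p\in S}\{0\}\times\prod_{p\notin S}\mathbb{F}_p$ for a finite set $S$ of primes; in the notation of the lemma this set is exactly $C_{S^{\mathrm c}}$, and $S^{\mathrm c}$ is cofinite because $S$ is finite. Consequently any open subgroup $H$ contains $C_{S^{\mathrm c}}$ for some finite $S$, and the only remaining task is to determine $H$ modulo $C_{S^{\mathrm c}}$.

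For this I would pass to the quotient $\left(\prod_p\mathbb{F}_p\right)/C_{S^{\mathrm c}}\cong\prod_{p\in S}\mathbb{F}_p$, a finite abelian group whose order is the squarefree product of the distinct primes in $S$. The image $\overline H$ of $H$ is a subgroup of this finite group, and the structural input is that a finite abelian group of squarefree order splits as the direct sum of its simple $p$-parts $\mathbb{Z}/p\mathbb{Z}$, so that every subgroup decomposes as $\overline H=\bigoplus_{p\in T}\mathbb{Z}/p\mathbb{Z}$ for a unique $T\subseteq S$; here one uses only that $\mathbb{Z}/p\mathbb{Z}$ has no proper nontrivial subgroup and that a subgroup of a product of finite groups of pairwise coprime orders is the product of its intersections with the factors. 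Pulling $\overline H$ back along the quotient map then yields $H=C_{\Sigma}$ with $\Sigma:=S^{\mathrm c}\cup T$, which is cofinite since $S^{\mathrm c}$ already is.

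I expect the genuine content to be confined to the elementary group-theoretic splitting in the last paragraph; the topological reduction is routine once one records that every factor is discrete. The only point I would take care to state cleanly is the identification of $\left(\prod_p\mathbb{F}_p\right)/C_{S^{\mathrm c}}$ with $\prod_{p\in S}\mathbb{F}_p$ and the compatibility of the decomposition of $\overline H$ with it, but I do not anticipate any real obstacle there.
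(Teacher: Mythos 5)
Your proposal is correct and follows essentially the same route as the paper: reduce to the quotient by a basic open subgroup $C_{S^{\mathrm c}}$, then use that a subgroup of a finite abelian group of squarefree (pairwise coprime) order splits into its $p$-components, each of which is $0$ or all of $\mathbb{F}_p$. The paper merely proves that splitting by hand via a Chinese-remainder-theorem argument with the multipliers $a_p=\prod_{q\neq p}q$, whereas you cite it as a standard fact; the content is identical.
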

	\begin{proof}
		Let $U$ be an open subgroup of $\prod_p \mathbb{F}_p$. Then, there exists a cofinite subset $\Sigma$ of the set of all prime numbers such that $U$  contains $C_{\Sigma}$, since such open subgroups form a fundamental system of $0$ in $\prod_p\mathbb{F}_p$. We fix such a $\Sigma$ and regard $\overline{U}:=U/C_{\Sigma}$ as a subgroup of $\bigoplus_{p \in \Sigma^{\mathrm{c}}}\mathbb{F}_p$ via the natural isomorphism $\bigoplus _{p \in \Sigma^{\mathrm{c}}}\mathbb{F}_p \xrightarrow{\sim} \prod_p\mathbb{F}_p/C_{\Sigma}$. For $p \in \Sigma^{\mathrm{c}}$, we define {\em an integer} $a_p$ by $a_p:=\prod_{q \in \Sigma^{\mathrm{c}}, q \neq p}q$ and 
		\[
		\alpha_p \colon \bigoplus_{p \in \Sigma^{\mathrm{c}}}\mathbb{F}_p \twoheadrightarrow \mathbb{F}_p\oplus \bigoplus_{\Sigma^{\mathrm{c}}\setminus \{ p \}}\{0\}\ \subset  \bigoplus_{p \in \Sigma^{\mathrm{c}}}\mathbb{F}_p 
		\]
		by $\alpha_p(x) := a_px$. For $u = (u_p)_p \in \overline{U}$, we see that $u':=(a_p^{-1}u_p)_p \in \overline{U}$. Indeed, by the Chinese remainder theorem, there exsists an integer $n$ such that $nu = u'$. Therefore, we have $\overline{U} = \bigoplus_{p \in \Sigma^{\mathrm{c}}}\alpha_p(\overline{U})$, since $u_p = \alpha_p(u')$ holds. Let $\Sigma'$ be the subset of $\Sigma^{\mathrm{c}}$ consisting of $p$ such that the $p$-component of some element of $\overline{U}$ is non-zero. Since $\alpha_p(\overline{U})$ is a subgroup of the cyclic group $\mathbb{F}_p$, $\alpha_p(\overline{U}) = \mathbb{F}_p$ if $p \in \Sigma'$ and $\alpha_p(\overline{U}) = \{0 \}$ if $p \in \Sigma^{\mathrm{c}} \setminus \Sigma'$. Hence, we have $U = C_{\Sigma \cup \Sigma'}$ and $\Sigma \cup \Sigma'$ is cofinite.
	\end{proof}
	\begin{lemma}
		Let $G$ be a profinite group and $H$ its closed subgroup. Then, $H$ is equal to the intersection of all open subgroups of $G$ containing $H$.
		\label{profinite closed}\end{lemma}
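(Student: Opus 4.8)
The plan is to prove the two inclusions separately. One containment is immediate: every open subgroup $U$ with $H \subseteq U$ satisfies $H \subseteq U$ by hypothesis, so trivially $H \subseteq \bigcap_U U$, where $U$ ranges over the open subgroups of $G$ containing $H$. The entire content of the lemma lies in the reverse inclusion, which I would establish contrapositively: given an element $g \in G \setminus H$, I would exhibit a single open subgroup $U$ satisfying $H \subseteq U$ and $g \notin U$.

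First I would invoke the fundamental property of profinite groups that the open normal subgroups of $G$ form a neighborhood basis of the identity. Since $H$ is closed, $G \setminus H$ is an open neighborhood of $g$; hence there exists an open normal subgroup $N$ of $G$ with $gN \subseteq G \setminus H$, or equivalently $gN \cap H = \emptyset$.

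Next I would set $U := HN$. Because $N$ is normal, the product $HN$ is a subgroup of $G$; because $N$ is open and $N \subseteq HN$, the set $HN$ is a union of cosets of $N$ and is therefore open. Clearly $H \subseteq U$. It remains to verify $g \notin U$: if we had $g = hn$ with $h \in H$ and $n \in N$, then $h = gn^{-1} \in gN$, contradicting $gN \cap H = \emptyset$. Thus $g \notin U$, and since $g \in G \setminus H$ was arbitrary we conclude $\bigcap_U U \subseteq H$, which combined with the trivial inclusion gives the asserted equality.

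The argument is elementary and I anticipate no serious obstacle. The only point deserving care is the standing fact, specific to the profinite topology, that the open normal subgroups constitute a neighborhood basis of the identity; this is precisely what legitimizes the choice of $N$ in the second step, and everything else is formal manipulation with the subgroup $HN$.
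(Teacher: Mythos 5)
Your proof is correct and follows essentially the same route as the paper: for $x \notin H$ one picks an open normal subgroup $N$ with $xN \cap H = \emptyset$ and uses the open subgroup $HN$ to separate $x$ from $H$. You merely spell out in more detail the verifications (that $HN$ is an open subgroup and that $x \notin HN$) which the paper leaves implicit.
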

	\begin{proof}
		Let $x$ be an element of $G \setminus H$. Then, there exists an open normal subgroup $U$ of G not containing $x$ such that $xU$ and $H$ are disjoint. Since $HU$ is an open subgroup of $G$ satisfying $x \not \in HU$ and $H \subset HU$, $x$ is not an element of the intersection of all open subgroups of $G$ containing $H$.    	
	\end{proof}
	\begin{proposition}
		Every closed subgroup of $\prod_p\mathbb{F}_p$ is given by the form $C_{\Sigma}$ for some subset  $\Sigma$ of the set of all prime numbers.
		\label{closed}\end{proposition}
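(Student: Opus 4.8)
The plan is to derive this from the two preceding lemmas by a short formal argument. First I would observe that $\prod_p \mathbb{F}_p$ is a profinite group, being a product of the finite abelian groups $\mathbb{F}_p$, so that Lemma \ref{profinite closed} applies to it. Thus, given a closed subgroup $H \subset \prod_p \mathbb{F}_p$, I can write
\[
H = \bigcap_{i \in I} U_i,
\]
where $\{U_i\}_{i \in I}$ is the family of all open subgroups of $\prod_p \mathbb{F}_p$ containing $H$. By Lemma \ref{open}, each $U_i$ is of the form $C_{\Sigma_i}$ for some (cofinite) subset $\Sigma_i$ of the set of all prime numbers.

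The only computational point to check is that arbitrary intersections of the subgroups $C_\Sigma$ are again of this form. This is immediate from the description $C_\Sigma = \prod_{p \in \Sigma}\mathbb{F}_p \times \prod_{p \notin \Sigma}\{0\}$: an element $(x_p)_p$ lies in $C_\Sigma$ precisely when its support $\{p : x_p \neq 0\}$ is contained in $\Sigma$. Hence $(x_p)_p \in \bigcap_{i} C_{\Sigma_i}$ if and only if its support is contained in every $\Sigma_i$, i.e.\ in $\bigcap_i \Sigma_i$, which gives
\[
\bigcap_{i \in I} C_{\Sigma_i} = C_{\Sigma}, \qquad \Sigma := \bigcap_{i \in I}\Sigma_i.
\]
Combining this with the display above yields $H = C_\Sigma$, as desired.

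I do not expect any serious obstacle here; the content has essentially been front-loaded into Lemmas \ref{open} and \ref{profinite closed}. The one feature worth emphasizing is that, in contrast with the open case, the resulting index set $\Sigma$ need not be cofinite: since each $\Sigma_i$ is cofinite but $I$ may be infinite, the intersection $\bigcap_i \Sigma_i$ can be an arbitrary subset of the primes. (Indeed, any $\Sigma$ is the intersection of the cofinite sets obtained by removing one prime of $\Sigma^{\mathrm{c}}$ at a time, so every subset does arise.) This is precisely the flexibility that upgrades the classification of open subgroups to the stated classification of closed subgroups.
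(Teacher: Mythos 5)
Your proposal is correct and follows exactly the paper's argument: write the closed subgroup as the intersection of the open subgroups containing it (Lemma \ref{profinite closed}), identify each as some $C_{\Sigma_i}$ (Lemma \ref{open}), and observe that $\bigcap_i C_{\Sigma_i}=C_{\bigcap_i\Sigma_i}$. The only difference is that you spell out this last identity via the support description, which the paper states without proof.
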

	\begin{proof}
		Let $C$ be a closed subgroup of $\prod_p \mathbb{F}_p$. By Lemma \ref{open} and Lemma \ref{profinite closed}, there exists a family $\{\Sigma_{\lambda}\}_{\lambda \in \Lambda}$ of cofinite subsets of the set of all prime numbers such that 
		\[
		C = \bigcap_{\lambda \in \Lambda}C_{\Sigma_{\lambda}}.
		\]
		Since $\bigcap_{\lambda \in \Lambda}C_{\Sigma_{\lambda}} = C_{\Sigma}$ where $\Sigma =\bigcap_{\lambda \in \Lambda}\Sigma_{\lambda}$, we have the conclusion.
	\end{proof}
	\begin{corollary}
		Let $C$ be a closed subgroup of $\prod_p\mathbb{F}_p$. Then, every element of $C$ has a finite order if and only if $C$ is a finite group.
		\label{finite order}\end{corollary}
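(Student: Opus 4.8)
The plan is to reduce everything to the classification of closed subgroups provided by Proposition \ref{closed}, after which the statement becomes an elementary computation with orders. First I would invoke Proposition \ref{closed} to write $C = C_{\Sigma}$ for some subset $\Sigma$ of the set of all prime numbers. With this normal form in hand, the ``if'' direction is immediate: a finite group has only elements of finite order, so there is nothing to prove there.

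For the converse I would argue by contraposition, showing that if $\Sigma$ is infinite then $C$ already contains an element of infinite order, contradicting the hypothesis. The natural witness is the element $x = (x_p)_p$ with $x_p = 1$ for $p \in \Sigma$ and $x_p = 0$ for $p \notin \Sigma$; by the very definition of $C_{\Sigma}$ this element lies in $C$. Since $\mathbb{F}_p$ is cyclic of order $p$ as an additive group, the additive order of the $p$-component $x_p$ equals $p$ whenever $p \in \Sigma$. Consequently $n x = 0$ forces $p \mid n$ for every $p \in \Sigma$, and when $\Sigma$ is infinite no nonzero integer $n$ can be divisible by infinitely many distinct primes. Hence $x$ has infinite order, which is the desired contradiction, and we conclude that $\Sigma$ must be finite. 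Then $C = C_{\Sigma} = \prod_{p \in \Sigma} \mathbb{F}_p$ is a finite group.

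The only point demanding any attention — and the closest thing to an obstacle, though it is minor — is to verify that the candidate $x$ genuinely belongs to $C_{\Sigma}$ and that its order is being computed in the additive group $\prod_p \mathbb{F}_p$ (consistent with the torsion considerations in Proposition \ref{prop4.4}) rather than multiplicatively. Once Proposition \ref{closed} supplies the structural description, no further input is required and the remaining argument is entirely elementary.
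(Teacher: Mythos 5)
Your proposal is correct and matches the paper's own argument: both reduce to $C = C_{\Sigma}$ via Proposition \ref{closed} and, when $\Sigma$ is infinite, exhibit the same witness $(\varepsilon_p)_p$ with $\varepsilon_p = 1$ for $p \in \Sigma$ and $0$ otherwise as an element of infinite order. Your write-up merely spells out the divisibility argument that the paper leaves implicit.
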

	\begin{proof}
		By Proposition \ref{closed}, $C = C_{\Sigma}$ holds for some subset $\Sigma$ of all prime numbers. If $\Sigma$ is an infinite set, the element $(\varepsilon_p)_p$ of $C_{\Sigma}$ has an infinite order, where $\varepsilon_p= 1$ (resp. $0$) for $p \in \Sigma$ (resp. $p \not \in \Sigma$). Therefore, if every element of $C_{\Sigma}$ has a finite order, then $C_{\Sigma}$ is a finite group. The converse assertion is clear.
	\end{proof}
	\begin{proposition}
		Let $m$ be an integer. We assume that $K$ contains $\mu_p$ for any prime number $p$, where $\mu_p$ is the set of all $p$-th roots of unity in $\overline{K}$. Then, the natural homomorphism
		\begin{equation*}
			\iota_{\mathcal{A}}\colon \frac{\prod_pH^1(K, \mathbb F_p(m))}{\bigoplus_p H^1\left(K,\mathbb F_p(m)\right)} \longrightarrow H_{\mathrm{cont}}^1(K, \mathcal{A}(m))
		\end{equation*} 
		is injective.
		\label{inj prop}
	\end{proposition}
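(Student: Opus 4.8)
The plan is to exploit the hypothesis $\mu_p\subset K$ to trivialize every Galois action in sight, which collapses all the cohomology groups into groups of continuous homomorphisms, and then to reduce injectivity to the structure theory of closed subgroups of $\prod_p\mathbb{F}_p$ established in Lemma~\ref{open}, Proposition~\ref{closed}, and Corollary~\ref{finite order}. First I would record that, since $K$ contains $\mu_p$ for every prime $p$, the mod-$p$ cyclotomic character is trivial on $G_K$, so $G_K$ acts trivially on each $\mathbb{F}_p(m)=\mu_p^{\otimes m}$, hence trivially on $\prod_p\mathbb{F}_p(m)$ and on $\mathcal{A}(m)$. Consequently $H^1(K,\mathbb{F}_p(m))=\mathrm{Hom}_{\mathrm{cont}}(G_K,\mathbb{F}_p)$ with every coboundary equal to $0$, and the same holds with $\mathbb{F}_p$ replaced by $\mathcal{A}(m)$. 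I would then describe $\iota_{\mathcal{A}}$ concretely: a class in the source is represented by a tuple $(c_p)_p$, each $c_p$ by a homomorphism $f_p\colon G_K\to\mathbb{F}_p$, and $\iota_{\mathcal{A}}$ sends it to the class of $\sigma\mapsto[(f_p(\sigma))_p]\in\mathcal{A}(m)$, continuity being automatic because $\mathcal{A}(m)$ carries the indiscrete topology.

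Next, suppose such a class lies in $\ker\iota_{\mathcal{A}}$. Because the $G_K$-action on $\mathcal{A}(m)$ is trivial, the only coboundary is $0$, so the associated homomorphism $G_K\to\mathcal{A}(m)$ must vanish identically; equivalently $(f_p(\sigma))_p\in\bigoplus_p\mathbb{F}_p(m)$ for every $\sigma\in G_K$. I would then assemble the single continuous homomorphism $F:=(f_p)_p\colon G_K\to\prod_p\mathbb{F}_p$. Since $G_K$ is profinite, hence compact, and $\prod_p\mathbb{F}_p$ is Hausdorff, the image $F(G_K)$ is a compact, therefore closed, subgroup of $\prod_p\mathbb{F}_p$, and the previous sentence shows $F(G_K)\subseteq\bigoplus_p\mathbb{F}_p$ after the identification $\mathbb{F}_p(m)\cong\mathbb{F}_p$.

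Finally, I would invoke the classification: every element of a subgroup contained in $\bigoplus_p\mathbb{F}_p$ has finite support, hence finite order, so Corollary~\ref{finite order} forces the closed subgroup $F(G_K)$ to be \emph{finite}. A finite subgroup of $\prod_p\mathbb{F}_p$ can surject onto the factor $\mathbb{F}_p$ for only finitely many $p$ (those dividing its order), so $f_p\neq 0$ for only finitely many $p$; since $f_p=0$ forces $c_p=0$ (coboundaries vanish), the tuple $(c_p)_p$ lies in $\bigoplus_p H^1(K,\mathbb{F}_p(m))$ and the class is zero, which is injectivity. I expect the genuine obstacle to be precisely this last passage: from ``each $F(\sigma)$ has finite support'' (a pointwise statement) to ``$F(G_K)$ is supported on finitely many primes'' (a uniform statement). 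This implication fails for arbitrary subsets and holds only because $F(G_K)$ is a \emph{closed subgroup}, which is exactly why the classification results of the preceding subsection, culminating in Corollary~\ref{finite order}, are needed.
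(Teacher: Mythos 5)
Your proposal is correct and follows essentially the same route as the paper: trivialize the Galois action using $\mu_p\subset K$ to identify both cohomology groups with groups of continuous homomorphisms, observe that the image of the assembled homomorphism $G_K\to\prod_p\mathbb{F}_p(m)$ is a closed subgroup consisting of finite-order elements, and invoke Corollary~\ref{finite order} to conclude it is finite, hence supported on finitely many primes. Your closing remark correctly identifies the crux — passing from the pointwise finite-support statement to a uniform one via closedness of the image — which is exactly the role Corollary~\ref{finite order} plays in the paper's argument.
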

	\begin{proof}
		Note that 
		\[
		H_{\mathrm{cont}}^1(K, \prod_p \mathbb{F}_p (m)) \cong \mathrm{Hom}_{\mathrm{cont}}(G_K, \prod_p \mathbb{F}_p(m))
		\]
		and
		\[
		H_{\mathrm{cont}}^1(K, \mathcal{A}(m)) \cong \mathrm{Hom}_{\mathrm{cont}}(G_K, \mathcal{A}(m))
		\]
		hold by the assumption of $K$. Let $f\colon G_K \rightarrow \prod_p\mathbb{F}_p(m)$ be a continuous homomorphism such that the composite of $f$ and  $\prod_p\mathbb{F}_p(m) \rightarrow \mathcal{A}(m)$ is zero. Then, every element of $\mathrm{Im}(f)$ has a finite order. Since $f$ is a continuous homomorphism of profinite groups, $\mathrm{Im}(f)$ is a closed subgroup of $\prod_p\mathbb{F}_p(m)$. Therefore, $\mathrm{Im}(f)$ is a finite group by Corollary \ref{finite order} and this implies that $f$ is an element of  $\bigoplus_p\mathrm{Hom}_{\mathrm{cont}}(G_K, \mathbb{F}_p(m))$.
	\end{proof}
	Note that the inclusion $\iota_{\mathcal{A}}$ is not surjective in general.
	
	We introduce a correction of local conditions
	of $H_{\mathrm{cont}}^1(K,\mathcal A(m))$.
	For the rest of this subsection, we suppose that $K$ is an algebraic extension of
	${\mathbb Q}$.
	Then, for each prime number $p$,
	we put $K_p:=K\otimes_{{\mathbb Q}}{\mathbb Q}_p$ and define its subring $\mathcal O_{K_p}$
	by $\mathcal O_{K_p}:=\mathcal O_K\otimes_{{\mathbb Z}}{\mathbb Z}_p$.
	
	\begin{definition}Assume that $K$ contains $\mu_p$ for every prime number $p$.
		\begin{itemize}
			\item[(1)]For each prime number $p$, we put
			\[
			H^1_f(K_p,\mathbb F_p(m)):=\mathrm{Im}\Big(\mathcal O_{K_p}^\times/(\mathcal O_{K_p}^\times)^p\otimes_{\mathbb F_p} \mathbb F_p(m-1)\hookrightarrow H^1(K_p,\mathbb F_p(m))\Big).
			\]
			\item[(2)]We define the finite part $H^1_f(K,\mathcal A(m))$ of $H^1_{\mathrm{cont}}(K,\mathcal A(m))$ by
			\[
				H^1_f(K,\mathcal A(m)):=
				\mathrm{Ker}\left(\mathrm{Im}(\iota_{\mathcal{A}}) \rightarrow \left(\prod_{p}\frac{H^1(K_p,\mathbb F_p(m))}{H^1_f(K_p,\mathbb F_p(m))}\right)\otimes_{{\mathbb Z}}{\mathbb Q}\right).
			\]
			Here, the above homomorphism is given by Proposition \ref{prop4.4} and Proposition \ref{inj prop}.
		\end{itemize}
	\end{definition}
	By definition, we have a natural homomorphism
	\begin{equation*}
		\kappa^{-1}_{\mathcal A(m),K}\colon H^1_f(K,\mathcal A(m))\rightarrow \left(\prod_{p}\mathcal O_{K_p}^\times/(\mathcal O_{K_p}^\times)^p\otimes_{\mathbb{F}_p} \mathbb F_p(m-1)\right)\otimes_{{\mathbb Z}}{\mathbb Q}.
	\end{equation*}
	Let $M$ be a subfield of $K$ which is finite over ${\mathbb Q}$.
	Then, the $p$-adic logarithm induces a natural map
	\begin{equation*}
		\mathcal O_{M_p}^\times:=\bigoplus_{v\mid p}\mathcal O_{M_v}^\times\twoheadrightarrow \bigoplus_{v\mid p}\log_p(\mathcal{O}_{M_v}^{\times})\subset M\otimes_{{\mathbb Q}}{\mathbb Q}_p.
	\end{equation*}
	By taking the direct limit with respect to $M$,
	we obtain a natural homomorphism
	\begin{equation*}
		\log_{p,K}\colon \mathcal O_{K_p}^\times =\varinjlim_{M \subset K} \mathcal O_{M_p}^\times\rightarrow \varinjlim_{M \subset K} M\otimes_{{\mathbb Q}}{\mathbb Q}_p=K\otimes_{{\mathbb Q}}{\mathbb Q}_p.
		\label{logp}
	\end{equation*}
	\begin{remark}
		The ${\mathbb Z}_p$-module $\log_{p, K}(\mathcal{O}_{K_p}^{\times})$ is not an $\mathcal O_{K_p}$-submodule of $K_p$, in general.
	\end{remark}
	\begin{definition}
		We define the $\mathcal A$-module $\Lambda_{\mathcal{A}, K}(m-1)$ by
		\[
		\Lambda_{\mathcal A,K}(m-1):=\left(\prod_{p}\log_{p, K}(\mathcal{O}_{K_p}^{\times})/p\log_{p, K}(\mathcal{O}_{K_p}^{\times})\otimes_{\mathbb{F}_p} \mathbb F_p(m-1)\right)\otimes_{{\mathbb Z}}{\mathbb Q}.
		\]
		\label{dfn4.7}
	\end{definition}
	\begin{example}
		If $K={\mathbb Q}$, then the $\mathcal A$-module $\Lambda_{\mathcal A,{\mathbb Q}}(0)$
		is naturally isomorphic to an $\mathcal{A}$-module $\mathbf p\mathcal A_2 := \left. \left( \prod_pp\mathbb{Z}/p^2\mathbb{Z}\right) \right/ \left( \bigoplus_pp\mathbb{Z}/p^2\mathbb{Z}\right)$, because $\log_p({\mathbb Z}_p^\times )=p{\mathbb Z}_p$.
	\end{example}
	The product of $p$-adic logarithms induces a natural homomorphism
	\[
	\log_{\mathcal A(m-1),K}\colon \left(\prod_{p}\mathcal O_{K_p}^\times/(\mathcal O_{K_p}^\times)^p\otimes_{\mathbb{F}_p} \mathbb F_p(m-1)\right)\otimes_{{\mathbb Z}}{\mathbb Q}\rightarrow \Lambda_{\mathcal A,K}(m-1).
	\]
	\begin{definition}
		Assume that $K$ contains $\mu_p$ for every prime number $p$. Then, we define a group homomorphism
		\begin{equation*}
			\lambda_{\mathcal A,K}^{(m)}\colon H^1_f(K,\mathcal A(m))\rightarrow \Lambda_{\mathcal A,K}(m-1)
		\end{equation*}
		by composing $\kappa^{-1}_{\mathcal A(m),K}$ and $\log_{\mathcal A(m-1),K}$.	
		\label{dfn4.8'}
	\end{definition}
	\subsection{Ad\'ele-like reformulation of the main theorem}
	Now we give a reformulation of our main result.
	We fix a number field $K$, $z\in K\setminus\{0,1\}$,
	and an \'etale path $\gamma$ form $\overrightarrow{01}$ to $z$ in $\mathbb P^1_{\overline K}\setminus\{0,1,\infty\}$.
	Let us define the value at $z$ of {\em the $\mathcal A$-\'etale polylogarithm} $\text{\rm \pounds}^{\mathrm{\acute{e}t}}_{\mathcal A,m} (z,\gamma)$ attached to $\gamma$ to be the composite of continuous maps
	\[
	\text{\rm \pounds}^{\mathrm{\acute{e}t}}_{\mathcal A,m} (z,\gamma)\colon G_K\xrightarrow{\li^{\mathrm{\acute{e}t}}_{\mathbb A_{\mathbb Q,f},m} (z,\gamma)}\mathbb A_{\mathbb Q,f}(m)\twoheadrightarrow \mathcal A(m),
	\]
	where $\li^{\mathrm{\acute{e}t}}_{\mathbb A_{\mathbb Q,f},m} (z,\gamma)$
	is the value at $z$ of the \'etale polylogarithm attached to $\gamma$ (Remark \ref{normalization} (2)). For each $z\in K\setminus\{0, 1\}$, we put $K_z:=K(\{\mu_p, z^{1/p}\}_p)$ where $p$ runs over all prime numbers.
	\begin{proposition}
		The restriction of the $\mathcal{A}$-\'etale polylogarithm $\text{\rm \pounds}^{\mathrm{\acute{e}t}}_{\mathcal A,m} (z,\gamma)$ to the absolute Galois group $G_{K_z}$ is a continuous group homomorphism.
		Furthermore, it is contained in the finite part $H^1_f(K_z,\mathcal A(m))$.
		\label{propB'}
	\end{proposition}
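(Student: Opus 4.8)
The strategy is to unwind the definition of $\text{\rm \pounds}^{\mathrm{\acute{e}t}}_{\mathcal A,m}(z,\gamma)$ componentwise and to reduce everything to the prime-by-prime statements in Proposition~\ref{propNW} and Proposition~\ref{propB}. First I would record the comparison that, for each prime $p>m$, the $p$-th component of $\li^{\mathrm{\acute{e}t}}_{\mathbb A_{\mathbb Q,f},m}(z,\gamma)$ agrees with $\li^{\mathrm{\acute{e}t}}_{p,m}(z,\gamma)$ under the projection $\mathbb A_{\mathbb Q,f}(m)\to\mathbb Q_p(m)$; this holds because the maximal pro-$p$ quotient of the pro-nilpotent completion $\pi_1^{\mathrm{nil}}(X)$ is $\pi_1^{p}(X)$, so that the constructions of Definition~\ref{dfn4.1} and Remark~\ref{normalization}(2) are compatible. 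Reducing modulo $p$, the $p$-th component of $\text{\rm \pounds}^{\mathrm{\acute{e}t}}_{\mathcal A,m}(z,\gamma)$ is $\text{\rm \pounds}^{\mathrm{\acute{e}t}}_{p,m}(z,\gamma)$, so that $\text{\rm \pounds}^{\mathrm{\acute{e}t}}_{\mathcal A,m}(z,\gamma)$ is the image under $\iota_{\mathcal A}$ of the class of $(\text{\rm \pounds}^{\mathrm{\acute{e}t}}_{p,m}(z,\gamma))_p$ in $(\prod_p H^1)/(\bigoplus_p H^1)$; the finitely many primes $p\le m$ may be set to $0$, since they are annihilated modulo $\bigoplus_p$. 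Note that $K_z$ contains $\mu_p$ for every $p$ by construction, so the definitions of $\iota_{\mathcal A}$ and of the finite part $H^1_f(K_z,\mathcal A(m))$ apply with $K=K_z$.

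Next I would prove the homomorphism property. Put $L_p:=K(\mu_p,z^{1/p})$, so that $L_p\subseteq K_z$ and hence $G_{K_z}\subseteq G_{L_p}$ for every $p$. The element $w_{p,m}(z,\gamma)$ lies in $L_p^\times$, and the formula of Proposition~\ref{propNW} holds over the global field $L_p$ as well, since $w_{p,m}(z,\gamma)$ is a global element and both sides are global cohomology classes; it exhibits $\text{\rm \pounds}^{\mathrm{\acute{e}t}}_{p,m}(z,\gamma)|_{G_{L_p}}$ as the Kummer class $\frac{1}{(m-1)!}\kappa_{p,L_p}(w_{p,m}(z,\gamma))\otimes\zeta_p^{\otimes(m-1)}$, which is a continuous homomorphism $G_{L_p}\to\mathbb{F}_p(m)$. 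Restricting to $G_{K_z}$ preserves this, so each component of $\text{\rm \pounds}^{\mathrm{\acute{e}t}}_{\mathcal A,m}(z,\gamma)|_{G_{K_z}}$ is a continuous homomorphism; passing to the quotient $\mathcal A(m)$, and using that $\text{\rm \pounds}^{\mathrm{\acute{e}t}}_{\mathcal A,m}(z,\gamma)$ is continuous on all of $G_K$ by construction, the restriction $\text{\rm \pounds}^{\mathrm{\acute{e}t}}_{\mathcal A,m}(z,\gamma)|_{G_{K_z}}$ is a continuous group homomorphism lying in $\mathrm{Im}(\iota_{\mathcal A})$ by the first paragraph.

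It then remains to verify membership in the finite part, that is, that the class maps to $0$ under the homomorphism $\mathrm{Im}(\iota_{\mathcal A})\to(\prod_p H^1(K_{z,p},\mathbb{F}_p(m))/H^1_f(K_{z,p},\mathbb{F}_p(m)))\otimes_{\mathbb Z}\mathbb Q$ appearing in the definition of $H^1_f(K_z,\mathcal A(m))$. By Proposition~\ref{propB}, whenever $p\nmid(1-z)$ the element $w_{p,m}(z,\gamma)$ is a $p$-unit, so the localization at $p$ of $\text{\rm \pounds}^{\mathrm{\acute{e}t}}_{p,m}(z,\gamma)|_{G_{K_z}}$ lies in $H^1_f(K_{z,p},\mathbb{F}_p(m))$ and its image in the quotient vanishes. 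Since $1-z$ is a fixed nonzero element of the number field $K$, it is divisible by only finitely many primes, so the image lies in $\bigoplus_p H^1(K_{z,p},\mathbb{F}_p(m))/H^1_f(K_{z,p},\mathbb{F}_p(m))$; being a tuple supported on finitely many $\mathbb{F}_p$-vector spaces it is torsion, hence $0$ after $\otimes_{\mathbb Z}\mathbb Q$ (cf.\ the argument of Proposition~\ref{prop4.4}). This places $\text{\rm \pounds}^{\mathrm{\acute{e}t}}_{\mathcal A,m}(z,\gamma)|_{G_{K_z}}$ in $H^1_f(K_z,\mathcal A(m))$.

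The main obstacle I expect is not any single estimate but the local--global bookkeeping underlying the first and third steps. In the first step one must make precise the comparison between the pro-nilpotent and pro-$p$ constructions, so that $\text{\rm \pounds}^{\mathrm{\acute{e}t}}_{\mathcal A,m}(z,\gamma)$ is genuinely assembled from the $\text{\rm \pounds}^{\mathrm{\acute{e}t}}_{p,m}(z,\gamma)$. In the third step one must check that the local finiteness at $K_{v,z}$ provided by Proposition~\ref{propB} really yields the local condition at $K_{z,p}$ used to define $H^1_f(K_z,\mathcal A(m))$; concretely, this requires that the restriction of $\text{\rm \pounds}^{\mathrm{\acute{e}t}}_{p,m}(z,\gamma)|_{G_{K_z}}$ to a decomposition group at a place of $K_z$ above $p$ factors through $G_{K_{v,z}}$, so that the $p$-unit property of $w_{p,m}(z,\gamma)$ controls the $K_{z,p}$-localization as well.
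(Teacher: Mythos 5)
Your proof is correct and takes essentially the same approach as the paper, whose entire proof is the one-line statement that the proposition is a direct consequence of Proposition~\ref{propB}. You have simply filled in the details the paper leaves implicit --- in particular the key observation that the finitely many primes dividing $1-z$ (where the local finiteness of Proposition~\ref{propB} fails) contribute only a torsion element supported on $\bigoplus_p$, hence vanish after $\otimes_{\mathbb Z}\mathbb Q$, which is exactly why the componentwise statement suffices.
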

	\begin{proof}
		This is a direct consequence of Proposition \ref{propB}.
	\end{proof}
	Then, our main result can be reformulate as follows:
	\begin{theorem}Let $K$ be a number field and $m$ a positive integer greater than $1$.
		Let $z$ be an element of $K\setminus\{0, 1\}$ and $\gamma$ an \'etale path
		in ${\mathbb P}^1_{\overline K}\setminus\{0,1,\infty\}$ from $\overrightarrow{01}$ to $z$.
		We denote by $z^{1/p}$ the $p$-th root of $z$ determined by $\gamma$ for each prime number $p$.
		Then, the following congruence holds in $\Lambda_{\mathcal A,K_z}(m-1):$
		\begin{equation}
			\label{eqA}
			\lambda_{\mathcal A,K_z}^{(m)}\left( \text{\rm \pounds}^{\mathrm{\acute{e}t}}_{\mathcal A,m} (z,\gamma)\right)\equiv
			\left[ \frac{(1-\zeta_{\mathbf p})^{\mathbf {p}-m}}{z-1}\text{\rm \pounds}_{\mathcal A,m}\left(z^{1/\mathbf p}\right)\right]\otimes\zeta_{\mathbf p}^{\otimes{(m-1)}} \ \ \ \Bigl( \mathrm{mod}\ (\zeta_{\mathbf p}-1)^{{\mathbf p}-m+1}\Bigr).\nonumber
		\end{equation}
		\label{thmC'}
	\end{theorem}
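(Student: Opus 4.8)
The plan is to reduce the adélic statement to the already-established $p$-local Theorem \ref{thmC} componentwise, and then to assemble the $p$-components into the adélic congruence. First I would unwind the definitions: by construction $\text{\rm \pounds}^{\mathrm{\acute{e}t}}_{\mathcal A,m}(z,\gamma)$ is the image of $\li^{\mathrm{\acute{e}t}}_{\mathbb A_{\mathbb Q,f},m}(z,\gamma)$ under the projection $\mathbb A_{\mathbb Q,f}(m)\twoheadrightarrow \mathcal A(m)$, and the $p$-component of this adélic \'etale polylogarithm is exactly the mod $p$ \'etale polylogarithm $\text{\rm \pounds}^{\mathrm{\acute{e}t}}_{p,m}(z,\gamma)$ of Definition \ref{dfn4.2}. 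Likewise $\lambda_{\mathcal A,K_z}^{(m)}$ is defined as the product over $p$ of the local maps $\lambda_{\mathbb F_p(m-1),K_{v,z}}=\lambda_{v,z}^{(m)}$ (modulo the subgroups $\bigoplus_p$), and $\text{\rm \pounds}_{\mathcal A,m}$ is by definition $(\text{\rm \pounds}_{p,m}\bmod p)_p$. Thus the left- and right-hand sides of (\ref{eqA}) are both given, $p$-component by $p$-component, by the two sides of (\ref{eqB}).

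Next I would apply Theorem \ref{thmC} to each prime $p$. Theorem \ref{thmC} requires the hypotheses $1<m<p-1$, that $v\mid p$ be unramified in $K/\mathbb Q$, and that $p\nmid z(1-z)$. The key observation is that all three conditions hold \emph{for all but finitely many $p$}: the inequality $m<p-1$ excludes only finitely many small primes; only finitely many primes ramify in the fixed number field $K$; and since $z\in K\setminus\{0,1\}$ is fixed, the numerators and denominators of $z$ and $1-z$ are divisible by only finitely many primes, so $p\mid z(1-z)$ for only finitely many $p$. Because the target $\mathcal A$-modules are quotients by the direct sums $\bigoplus_p$, any discrepancy concentrated at finitely many primes is killed in $\mathcal A(m)$, respectively in $\Lambda_{\mathcal A,K_z}(m-1)$. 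Hence it suffices that (\ref{eqB}) holds for almost all $p$, which Theorem \ref{thmC} provides. The error term $\alpha_p(\zeta_p-1)^{p-m+1}$ appearing in (\ref{eqB}) lies in the ideal $(\zeta_p-1)^{p-m+1}$ at each prime, so collecting these over $p$ gives precisely the congruence ``mod $(\zeta_{\mathbf p}-1)^{\mathbf p-m+1}$'' recorded in (\ref{eqA}); the boldface $\mathbf p$ in the exponent is exactly the device for encoding the prime-dependent modulus as a single adélic statement.

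The main technical obstacle I anticipate is purely bookkeeping rather than conceptual: one must check that the various adélic maps $\lambda_{\mathcal A,K_z}^{(m)}$, $\kappa^{-1}_{\mathcal A(m),K_z}$, and $\log_{\mathcal A(m-1),K_z}$ genuinely restrict, component-wise, to the local maps $\lambda_{v,z}^{(m)}$ of Definition \ref{dfn4.8} after passing to the quotients by $\bigoplus_p$, and that Proposition \ref{propB'} correctly places $\text{\rm \pounds}^{\mathrm{\acute{e}t}}_{\mathcal A,m}(z,\gamma)$ in the finite part $H^1_f(K_z,\mathcal A(m))$ on which $\lambda_{\mathcal A,K_z}^{(m)}$ is defined. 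Here the injectivity of $\iota_{\mathcal A}$ from Proposition \ref{inj prop} is what lets me identify the adélic cohomology class with its tuple of $p$-components unambiguously, so that a component-wise identity upgrades to the genuine equality in $H^1_f(K_z,\mathcal A(m))$. Once this compatibility of the adélic and local formalisms is verified, the theorem follows formally from Theorem \ref{thmC} applied at each $p$, with the finitely many excluded primes absorbed into the $\bigoplus_p$.
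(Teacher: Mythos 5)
Your proposal is correct and follows essentially the same route as the paper: the paper's proof of Theorem \ref{thmC'} simply observes that it suffices to verify the congruence for all sufficiently large $p$-components, where it follows directly from Theorem \ref{thmC}, with the finitely many exceptional primes (ramified, dividing $z(1-z)$, or with $p \le m+1$) absorbed into the quotient by $\bigoplus_p$. Your additional bookkeeping about the compatibility of the ad\'elic maps with their local components is exactly the implicit content of the paper's one-line argument.
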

	\begin{remark}
		We have used $\mathbf{p}$-notations in the above theorem. See \cite[Example 3.3]{SS}.
	\end{remark}
	\begin{proof}
		It is sufficient to show that congruence for each sufficiently large $p$-components.
		Thus, the conclusion follows from Theorem \ref{thmC} directly.
	\end{proof}
	\appendix
	\section{A functional equation of finite star-multiple polylogarithms}\label{sec:app}
	In this appendix, we prove a functional equaiton of one-variable finite star-multiple polylogarithms for the proof of Theorem \ref{thmC}. 
	
	Let $k_1, \dots, k_m$ be positive integers and $\Bbbk := (k_1, \dots, k_m)$. We call such a tuple $\Bbbk$ {\em an index}. The weight $\mathrm{wt}(\Bbbk)$ of $\Bbbk$ is defined to be $k_1+\cdots+k_m$.  For a prime number $p$, {\em the one-variable finite star-multiple polylogarithms} $\text{\pounds}^{\star}_{p, \Bbbk}(t)$ and $\widetilde{\text{\pounds}}^{\star}_{p, \Bbbk}(t)$ are defined by
	\begin{equation*}
	\label{nonstar}
	\text{\pounds}_{p, \Bbbk}^{\star}(t):=\sum_{p-1\geq n_1\geq \cdots \geq n_m\geq 1}\frac{t^{n_1}}{n_1^{k_1}\cdots n_m^{k_m}}\in \mathbb{Z}_{(p)}[t]
	\end{equation*}
	and by 
	\begin{equation*}
	\label{star}
	\widetilde{\text{\pounds}}_{p, \Bbbk}^{\star}(t):=\sum_{p-1\geq n_1\geq \cdots \geq n_m\geq 1}\frac{t^{n_m}}{n_1^{k_1}\cdots n_m^{k_m}}\in \mathbb{Z}_{(p)}[t].
	\end{equation*}
	Then, we define {\em $\mathcal{A}$-finite star-multiple polylogrithms} $\text{\pounds}^{\star}_{\mathcal{A}, \Bbbk}(t)$ and $\widetilde{\text{\pounds}}^{\star}_{\mathcal{A}, \Bbbk}(t)$ by
	\[
	\text{\pounds}^{\star}_{\mathcal{A}, \Bbbk}(t) := (\text{\pounds}_{p, \Bbbk}^{\star}(t)\bmod{p})_p, \quad \widetilde{\text{\pounds}}^{\star}_{\mathcal{A}, \Bbbk}(t) := (\widetilde{\text{\pounds}}_{p, \Bbbk}^{\star}(t)\bmod{p})_p \in \mathcal{A}_{\mathbb{Z}[t]}.
	\]
	Let $\zeta_{p}^{\star}(\Bbbk) = \text{\pounds}^{\star}_{p, \Bbbk}(1)$ and $\zeta_{\mathcal{A}}^{\star}(\Bbbk) = \text{\pounds}^{\star}_{\mathcal{A}, \Bbbk}(1)$ be {\em the finite multiple zeta-star values}.
	
	The authors proved the following polynomial identity in \cite{SS}:
	\begin{theorem}[{\cite[Corollary 2.7]{SS}}]
		Let $\Bbbk = (k_1, \dots , k_m)$ be an index, $\Bbbk^{\vee} = (k_1', \dots , k_{m'}')$, and $N$ a positive integer. Then, we have the following polynomial identity$:$
		\begin{equation}
		\sum_{N\geq n_1\geq \dots \geq n_m\geq 1}(-1)^{n_1}\binom{N}{n_1}\frac{t^{n_m}}{n_1^{k_1}\cdots n_m^{k_m}}
		=\sum_{N\geq n_1\geq \dots \geq n_{m'}\geq 1}\frac{(1-t)^{n_{m'}}-1}{n_1^{k_1'}\cdots n_{m'}^{k_{m'}'}}.
		\label{1-var. Hoffman identity}\end{equation}
		Here, $\Bbbk^{\vee}$ is the Hoffman dual of $\Bbbk$ (see \cite[Definition 2.1]{SS}).
	\label{generalized Dilcher}\end{theorem}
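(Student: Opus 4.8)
The plan is to establish the polynomial identity by induction on the weight $\mathrm{wt}(\Bbbk)$, using the recursive structure of the Hoffman dual. Throughout, $N$ is fixed; write $L_N(\Bbbk;t)$ and $R_N(\mathbf{j};t)$ for the left- and right-hand sides attached to indices $\Bbbk$ and $\mathbf{j}$, so that the claim reads $L_N(\Bbbk;t)=R_N(\Bbbk^\vee;t)$. Every index is built from $(1)$ by iterating the two elementary moves $\Bbbk\mapsto\Bbbk\!\cdot\!(1):=(k_1,\dots,k_m,1)$ (appending a $1$) and $\Bbbk\mapsto\Bbbk{\uparrow}:=(k_1,\dots,k_m+1)$ (raising the last entry), each of which increases the weight by one. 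A direct check of the combinatorial definition of the Hoffman dual gives the transport rules $(\Bbbk\!\cdot\!(1))^\vee=(\Bbbk^\vee){\uparrow}$ and $(\Bbbk{\uparrow})^\vee=(\Bbbk^\vee)\!\cdot\!(1)$, so it suffices to treat the base case $\Bbbk=(1)$ and to show that each move preserves the identity.

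For the base case $\Bbbk=(1)=\Bbbk^\vee$, both $L_N((1);t)$ and $R_N((1);t)$ vanish at $t=0$ and have $t$-derivative $((1-t)^N-1)/t$, hence coincide. For the move $\Bbbk\mapsto\Bbbk{\uparrow}$, a termwise computation (integrating the unique $t$-dependent factor) gives the two integration rules
\[
L_N(\Bbbk{\uparrow};t)=\int_0^t\frac{L_N(\Bbbk;s)}{s}\,ds,\qquad R_N(\mathbf{j}\!\cdot\!(1);t)=\int_0^t\frac{R_N(\mathbf{j};s)}{s}\,ds,
\]
the integrands being polynomials since the relevant sums vanish at $s=0$. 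Because $\Bbbk{\uparrow}$ is dual to $\mathbf{j}\!\cdot\!(1)$ with $\mathbf{j}=\Bbbk^\vee$, these two rules have the same shape, and the inductive hypothesis $L_N(\Bbbk;\cdot)=R_N(\Bbbk^\vee;\cdot)$ propagates through this move at once.

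The move $\Bbbk\mapsto\Bbbk\!\cdot\!(1)$ is the delicate one, since appending a $1$ is dual to raising the last entry, and these transform $L_N$ and $R_N$ differently. Differentiating in $t$ and summing the resulting geometric series over the new innermost variable yields
\[
\frac{d}{dt}L_N(\Bbbk\!\cdot\!(1);t)=\frac{L_N(\Bbbk;1)-L_N(\Bbbk;t)}{1-t},\qquad \frac{d}{dt}R_N(\mathbf{j}{\uparrow};t)=-\frac{R_N(\mathbf{j};t)+\overline{R}_N(\mathbf{j})}{1-t},
\]
where $\overline{R}_N(\mathbf{j}):=\sum_{N\geq\ell_1\geq\cdots\geq\ell_d\geq1}\ell_1^{-j_1}\cdots\ell_d^{-j_d}$. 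Setting $\mathbf{j}=\Bbbk^\vee$ and invoking the inductive identity, the two derivatives agree exactly when $L_N(\Bbbk;1)=-\overline{R}_N(\Bbbk^\vee)$; but this is precisely the $t=1$ specialization of the inductive hypothesis, since $R_N(\Bbbk^\vee;1)=-\overline{R}_N(\Bbbk^\vee)$. As both $L_N(\Bbbk\!\cdot\!(1);t)$ and $R_N((\Bbbk\!\cdot\!(1))^\vee;t)$ vanish at $t=0$, matching derivatives forces their equality, closing the induction.

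The main obstacle is organizational rather than computational: one must carry the \emph{full} polynomial identity in the indeterminate $t$ through the induction, so that its $t=1$ specialization is available exactly when the append-a-$1$ step needs it; an induction on a single value of $t$ would not close. Once this is set up, the remaining ingredients — the two transport rules for the Hoffman dual and the elementary integration and differentiation formulas above — are routine. An alternative would be to package everything into a single connected sum carrying a binomial connector and a transport relation exchanging the two index sides, but the weight induction above is more transparent.
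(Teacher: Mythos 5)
Your proof is correct. All the ingredients check out: the transport rules $(\Bbbk\cdot(1))^\vee=(\Bbbk^\vee){\uparrow}$ and $(\Bbbk{\uparrow})^\vee=(\Bbbk^\vee)\cdot(1)$ are exactly the comma/plus exchange defining the Hoffman dual; the base case derivative $((1-t)^N-1)/t$ matches on both sides; the two integration rules for the raise/append pair are termwise identities (both integrands vanish at $s=0$, so divisibility is not an issue); and in the delicate append-a-$1$ step the required input $L_N(\Bbbk;1)=-\overline{R}_N(\Bbbk^\vee)$ is indeed supplied by the $t=1$ specialization of the inductive hypothesis, since $(1-t)^{\ell}-1$ evaluates to $-1$ at $t=1$. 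Note, however, that the paper does not prove this statement at all: it is quoted as \cite[Corollary 2.7]{SS}, where it arises as a specialization of a more general multi-variable duality for finite multiple polylogarithms established there by a different inductive scheme. So your argument is not so much a variant of the paper's proof as a self-contained, elementary replacement for the citation; what it buys is a short direct derivation of exactly the one-variable identity needed here (including Dilcher's identity at $t=1$, $\Bbbk=(m)$), at the cost of not yielding the stronger multi-variable statements of \cite{SS}. Your closing remark about having to carry the full polynomial identity in $t$ through the induction, rather than a single evaluation, correctly identifies the only real subtlety.
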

	The case $t=1$ and $\Bbbk = m$ gives Dilcher's identity because of $(m)^{\vee}=(\overbrace{1, \dots, 1}^m)=:(\{1\}^m)$.  Since $(-1)^{n}\binom{p-1}{n}$ is congruent to $1 \bmod{p}$ for a prime number $p$ and a positive integer $n < p$, we have 
	\begin{equation}
		\widetilde{\text{\rm \pounds}}_{p, \Bbbk}^{\star}(t) \equiv  \widetilde{\text{\rm \pounds}}_{p, \Bbbk^{\vee}}^{\star}(1-t)-\zeta_{p}^{\star}(\Bbbk^{\vee}) \pmod{p}.
	\label{main thm of SS}\end{equation}
	This is a special case of main results of \cite{SS}. We recall the reversal formulas
	\begin{equation}
	\text{\rm \pounds}_{p, {\Bbbk}}^{\star}(t) \equiv (-1)^{\wt (\Bbbk )}t^{p}\widetilde{\text{\rm \pounds}}_{p, \overline{\Bbbk}}^{\star}(t^{-1}), \quad \widetilde{\text{\rm \pounds}}_{p, {\Bbbk}}^{\star}(t) \equiv (-1)^{\wt (\Bbbk )}t^{p}\text{\rm \pounds}_{p, \overline{\Bbbk}}^{\star}(t^{-1}) \pmod{p}
    \label{reversal}\end{equation}
	(cf. \cite[Proposition 3.11]{SS}). Here, $\overline{\Bbbk}$  is the reverse index.
	
	By combining the congruences (\ref{main thm of SS}) and (\ref{reversal}), we can calculate as follows:
	\begin{equation*}
	\begin{split}
	\text{\rm \pounds}_{p, \Bbbk}^{\star}(t) &\equiv (-1)^{\wt (\Bbbk)}t^{p}\widetilde{\text{\rm \pounds}}_{p, \overline{\Bbbk}}^{\star}(t^{-1}) \\ 
	&\equiv(-1)^{\wt (\Bbbk)}t^{p}\left( \widetilde{\text{\rm \pounds}}_{p, \overline{\Bbbk}^{\vee}}^{\star}(1-t^{-1})-\zeta_{p}^{\star}(\overline{\Bbbk}^{\vee})\right)\\
	&\equiv t^{p}(1-t^{-1})^{p}\text{\rm \pounds}_{p, \Bbbk^{\vee}}^{\star}((1-t^{-1})^{-1})-t^{p}\zeta_{p}^{\star}(\Bbbk^{\vee})\\
	&\equiv (t^{p}-1)\text{\rm \pounds}_{p, \Bbbk^{\vee}}^{\star}\left( \frac{t}{t-1}\right) -t^{p}\zeta_{p}^{\star}(\Bbbk^{\vee}) \pmod{p}.
	\end{split}
	\end{equation*} 
	Therefore, we see that the following congruence holds for any prime number $p$:
	\begin{equation}
	\text{\rm \pounds}_{p, \Bbbk}^{\star}(t) \equiv  (t^{p}-1)\text{\rm \pounds}_{p, \Bbbk^{\vee}}^{\star}\left( \frac{t}{t-1}\right) -t^{p}\zeta_{p}^{\star}(\Bbbk^{\vee}) \pmod{p}.
	\label{new fn eq}\end{equation}
	We can restate this by ad\'ele-like notations as follows:
	\begin{theorem}
		Let $\Bbbk$ be an index. Then, we have
		\begin{equation*}
			\text{\rm \pounds}_{\mathcal{A}, \Bbbk}^{\star}(t) = (t^{\mathbf{p}}-1)\text{\rm \pounds}_{\mathcal{A}, \Bbbk^{\vee}}^{\star}\left( \frac{t}{t-1}\right) -t^{\mathbf{p}}\zeta_{\mathcal{A}}^{\star}(\Bbbk^{\vee}).
			\label{emp fn eq}\end{equation*}
	    Here, $t^{\mathbf{p}}$ is defined to be $(t^p)_p \in \mathcal{A}_{\mathbb{Z}[t]}$.
		\label{key fn eq}\end{theorem}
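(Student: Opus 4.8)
The plan is to deduce the identity directly from the prime-by-prime congruence (\ref{new fn eq}), which has already been established for \emph{every} prime number $p$. Recall that, by definition, $\text{\rm \pounds}_{\mathcal{A}, \Bbbk}^{\star}(t)$ is the tuple $(\text{\rm \pounds}_{p, \Bbbk}^{\star}(t)\bmod p)_p$ viewed in $\mathcal A_{\mathbb Z[t]}=\bigl(\prod_p\mathbb Z[t]/p\mathbb Z[t]\bigr)/\bigl(\bigoplus_p\mathbb Z[t]/p\mathbb Z[t]\bigr)$, and that $t^{\mathbf p}=(t^p)_p$; thus every symbol appearing in the statement is, by construction, the image of an explicit tuple indexed by $p$.

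First I would check that the product $(t^{\mathbf p}-1)\,\text{\rm \pounds}_{\mathcal{A},\Bbbk^{\vee}}^{\star}\!\left(\tfrac{t}{t-1}\right)$ really defines an element of $\mathcal A_{\mathbb Z[t]}$. Although $t/(t-1)$ is not a polynomial, the $p$-th component is a genuine class in $\mathbb Z[t]/p\mathbb Z[t]$: modulo $p$ one has $t^p-1\equiv (t-1)^p$, so
\[
(t^p-1)\,\text{\rm \pounds}_{p,\Bbbk^{\vee}}^{\star}\!\left(\tfrac{t}{t-1}\right)\equiv\sum_{p-1\geq n_1\geq\cdots\geq n_{m'}\geq 1}\frac{t^{n_1}(t-1)^{p-n_1}}{n_1^{k_1'}\cdots n_{m'}^{k_{m'}'}}\pmod p,
\]
which is an honest polynomial because every index satisfies $n_1\leq p-1<p$. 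Hence each component, and therefore the whole tuple, lands in $\mathcal A_{\mathbb Z[t]}$.

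With well-definedness in hand, the identity is purely formal. For each prime $p$ the congruence (\ref{new fn eq}) asserts exactly that the $p$-th components of the two sides agree in $\mathbb Z[t]/p\mathbb Z[t]$. Since this holds for all $p$, the two tuples coincide already in $\prod_p\mathbb Z[t]/p\mathbb Z[t]$, a fortiori after projecting to the quotient $\mathcal A_{\mathbb Z[t]}$, which is the asserted equality. I do not anticipate a genuine obstacle here: the only non-formal point is the integrality check of the previous paragraph, and that in turn rests on the congruence (\ref{new fn eq})---itself obtained by chaining the Hoffman-duality congruence (\ref{main thm of SS}) with the reversal formulas (\ref{reversal})---being valid for \emph{every} prime, not merely for sufficiently large ones.
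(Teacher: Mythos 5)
Your argument is correct and is essentially the paper's own: the authors obtain Theorem \ref{key fn eq} precisely as the ad\'ele-like restatement of the congruence (\ref{new fn eq}), which they derive for every prime $p$ by combining (\ref{main thm of SS}) with the reversal formulas (\ref{reversal}). Your additional well-definedness check (using $t^p-1\equiv(t-1)^p \pmod p$ and $n_1\le p-1$ to see that each $p$-component is an honest polynomial) is a sensible explicit remark but does not change the route.
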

	\noindent This functional equation is not written explicitly in main results of \cite{SS}.
	\section*{Acknowledgements}
	The authors would like to thank their advisor Prof.\ Tadashi Ochiai, Sho Ogaki, and Kazuma Akagi for reading the manuscript carefully. They also would like to express their gratitude to the referee for useful suggestions to improve the manuscript.

\end{document}